\newtheorem{lemma}{Lemma}
\newtheorem*{lemma*}{Lemma}
\newtheorem{prop}{Proposition}
\newtheorem{theorem}{Theorem}
\newtheorem*{theorem*}{Theorem}
\newcommand{\dotminus}{\mathbin{\dot{-}}}
\newcommand{\dotoplus}{\mathbin{\dot{\oplus}}}
\newcommand{\sMat}[4]{\bigl(\begin{smallmatrix}#1\mathstrut&#2\mathstrut\\#3\mathstrut&#4\mathstrut\end{smallmatrix}\bigr)}
\newcommand{\sCol}[2]{\bigl(\begin{smallmatrix}#1\mathstrut\\#2\mathstrut\end{smallmatrix}\bigr)}
\DeclareMathOperator\mat{M}
\DeclareMathOperator\unit{U}
\DeclareMathOperator\stunit{StU}
\DeclareMathOperator\stmap{st}
\DeclareMathOperator\diag{D}
\DeclareMathOperator\glin{GL}
\DeclareMathOperator\stlin{St}
\DeclareMathOperator\symp{Sp}
\DeclareMathOperator\sorth{SO}
\DeclareMathOperator\group{G}
\DeclareMathOperator\ofasymp{ASp}
\DeclareMathOperator\ofaorth{AO}
\DeclareMathOperator\Ker{Ker}
\newcommand{\leqt}{\trianglelefteq}
\newcommand{\inv}[1]{\!\;\overline{\!\!\:#1\vphantom !\!\!\:}\;\!}
\DeclareMathOperator{\id}{id}
\DeclareMathOperator{\Aut}{Aut}
\DeclareMathOperator{\Heis}{Heis}
\newcommand{\up}[2]{{^{#1}\!{#2}}}
\title{A presentation of relative unitary Steinberg groups}
\author{
  Egor Voronetsky\thanks{Research is supported by the Russian Science Foundation grant 22-21-00257.} \\
  Chebyshev Laboratory, \\
  St. Petersburg State University, \\
  14th Line V.O., 29B, \\
  Saint Petersburg 199178 Russia \\
}
\begin{document}
\maketitle

\begin{abstract}
We find an explicit presentation of relative odd unitary Steinberg groups constructed by odd form rings and of relative doubly laced Steinberg groups over commutative rings, i.e. the Steinberg groups associated with the Chevalley group schemes of the types \(\mathsf B_\ell\), \(\mathsf C_\ell\), \(\mathsf F_4\) for \(\ell \geq 3\). For simply laced root systems such result is already known.
\end{abstract}


\section{Introduction}

Relative Steinberg groups \(\stlin(R, I)\) were defined in the stable linear case by F. Keune and J.-L. Loday in \cite{Keune, Loday}. Namely, this group is just
\[\frac{\Ker\bigl(p_{2*} \colon \stlin(I \rtimes R) \to \stlin(R)\bigr)}{\bigl[\Ker(p_{1*}), \Ker(p_{2*})\bigr]},\]
where \(R\) is a unital associative ring, \(I \leqt R\), \(p_1 \colon I \rtimes R \to R, a \rtimes p \mapsto a + p\) and \(p_2 \colon I \rtimes R \to R, a \rtimes p \mapsto p\) are ring homomorphisms. Such a group is a crossed module over \(\stlin(R)\) generated by \(x_{ij}(a)\) for \(a \in I\) with the ``obvious'' relations that are satisfied by the generators \(t_{ij}(a)\) of the normal subgroup \(\mathrm E(R, I) \leqt \mathrm E(R)\). It is classically known that \(\stlin(R, I)\) is generated by \(z_{ij}(a, p) = \up{x_{ji}(p)}{x_{ij}(a)}\) for \(a \in I\) and \(p \in R\) as an abstract group, the same holds for the relative elementary groups.

Such relative Steinberg groups and their generalizations for unstable linear groups and Chevalley groups are used in, e.g., proving centrality of \(\mathrm K_2\) \cite{CentralityD, CentralityE}, a suitable local-global principle for Steinberg groups \cite{LocalGlobalC, CentralityD, Tulenbaev}, early stability of \(\mathrm K_2\) \cite{Tulenbaev}, and \(\mathbb A^1\)-invariance of \(\mathrm K_2\) \cite{Horrocks, AInvariance}. In \cite[theorem 9]{CentralityE} S. Sinchuk proved that all relations between the generators \(z_\alpha(a, p)\) of \(\stlin(\Phi; R, I)\), where \(\Phi\) is a root system of type \(\mathsf{ADE}\) and \(R\) is commutative, come from various \(\stlin(\Psi; R, I)\) for root subsystems \(\Psi \subseteq \Phi\) of type \(\mathsf A_3\), i.e. \(\stlin(\Phi; R, I)\) is the amalgam of \(\stlin(\Psi; R, I)\) with identified generators \(z_\alpha(a, p)\).

There exist explicit presentations (in the sense of abstract groups) of relative unstable linear and symplectic Steinberg groups in terms of van der Kallen's generators, i.e. analogues of arbitrary transvections in \(\glin(n, R)\) or \(\symp(2n, R)\), see \cite{RelativeC, Tulenbaev} and \cite[proposition 8]{CentralityE}.

In \cite{RelStLin} we determined the relations between the generators \(z_\alpha(a, p)\) in the following two cases:
\begin{itemize}
\item for relative unstable linear Steinberg groups \(\stlin(n; R, I)\) with \(n \geq 4\),
\item for relative simply laced Steinberg groups \(\stlin(\Phi; R, I)\) with \(\Phi\) of rank \(\geq 3\).
\end{itemize}
In turns out that all the relations between \(z_\alpha(a, p)\) come from \(\stlin(\Psi; R, I)\) for \(\Psi\) of types \(\mathsf A_2\) and \(\mathsf A_1 \times \mathsf A_1\), thus Sinchuk's result may be strengthened a bit.

The relations for the simply laced Steinberg groups are easily obtained from the linear case and Sinchuk's result. In the linear case we actually considered Steinberg groups associated with a generalized matrix ring \(T\) instead of \(\mat(n, R)\), i.e. if \(T\) is a ring with a complete family of \(n\) full orthogonal idempotents. Such a generality is convenient to apply ``root elimination'', i.e. replacing the generators of a Steinberg group parameterized by a root system \(\Phi\) by some new generators parameterized by a system of relative roots \(\Phi / \alpha\). Moreover, instead of an ideal \(I \leqt R\) we considered arbitrary crossed module \(\delta \colon A \to T\) in the sense of associative rings since this is necessary for, e.g., applying the method of Steinberg pro-groups \cite{CentralityBC, LinK2}.

In this paper we find relations between the generators \(z_\alpha(a, p)\) for
\begin{itemize}
\item relative odd unitary Steinberg groups \(\stunit(R, \Delta; S, \Theta)\), where \(\delta \colon (S, \Theta) \to (R, \Delta)\) is a crossed module of odd form rings and \((R, \Delta)\) has a strong orthogonal hyperbolic family of rank at least \(3\) in the sense of \cite{CentralityBC} (this is a unitary analogue of generalized matrix rings and crossed modules of associative rings),
\item relative doubly laced Steinberg groups \(\stlin(\Phi; K, \mathfrak a)\) with \(\Phi\) of rank \(\geq 3\), where \(K\) is a unital commutative ring and \(\delta \colon \mathfrak a \to K\) is a crossed module of commutative rings.
\end{itemize}
The odd unitary case already gives a presentation of relative Steinberg groups associated with classical sufficiently isotropic reductive groups by \cite[theorem 4]{ClassicOFA}, so the second case is non-trivial only for the root system of type \(\mathsf F_4\). Some twisted forms of reductive groups of type $\mathsf A_\ell$ are constructed using non-strong orthogonal hyperbolic families on odd form rings (say, $\glin(\ell + 1, K)$ itself), but such groups may be constructed using generalized matrix rings, so they are considered in \cite{RelStLin}.

Actually, relative elementary subgroups of \(\stlin(\Phi; K)\) for doubly laced \(\Phi\) may be defined not only for ordinary ideals \(\mathfrak a \leqt K\), but for E. Abe's \cite{Abe} admissible pairs \((\mathfrak a, \mathfrak b)\), where \(\mathfrak a \leqt K\), \(2 \mathfrak a + \sum_{a \in \mathfrak a} Ka^2 \leq \mathfrak b \leq \mathfrak a\) is an intermediate group, and \(\mathfrak b k^2 \leq \mathfrak b\) for all \(k \in K\) in the case of type \(\mathsf C_\ell\) or \(\mathfrak b \leqt K\) in the cases of types \(\mathsf B_\ell\) and \(\mathsf F_4\). Such a pair naturally gives a subgroup \(\mathrm E(\Phi; \mathfrak a, \mathfrak b) \leq \mathrm G^{\mathrm{sc}}(\Phi, K)\) generated by \(x_\alpha(a)\) for short roots \(\alpha\) and \(a \in \mathfrak a\) and by \(x_\beta(b)\) for long roots \(\beta\) and \(b \in \mathfrak b\).

In order to study relative Steinberg groups associated with admissible pairs, we consider new families of Steinberg groups \(\stlin(\Phi; K, L)\), where \(\Phi\) is a doubly laced root system and \((K, L)\) is a \textit{pair of type} \(\mathsf B\), \(\mathsf C\), or \(\mathsf F\) respectively (the precise definition is given in section \ref{pairs-type}). Then admissible pairs are just crossed submodules of \((K, K)\) for a commutative unital ring \(K\). We also find relations between the generators \(z_\alpha(a, p)\) of
\begin{itemize}
\item relative doubly laced Steinberg groups \(\stlin(\Phi; K, L; \mathfrak a, \mathfrak b)\), where \(\Phi\) is a doubly laced root system of rank \(\geq 3\) and \((\mathfrak a, \mathfrak b) \to (K, L)\) is a crossed modules of pairs of type \(\mathsf B\), \(\mathsf C\), or \(\mathsf F\) respectively.
\end{itemize}

All relations between the generators \(z_\alpha(a, p)\) involve only the roots from root subsystems of rank \(2\), i.e. \(\mathsf A_2\), \(\mathsf{BC}_2\), \(\mathsf A_1 \times \mathsf A_1\), \(\mathsf A_1 \times \mathsf{BC}_1\) in the odd unitary case and \(\mathsf A_2\), \(\mathsf B_2\), \(\mathsf A_1 \times \mathsf A_1\) in the generalized Chevalley case.

Odd unitary groups were introduced by Victor Petrov  \cite{PetrovDef} in terms of odd form parameters on modules with hermitian forms. Later we generalized his definition \cite{ClassicOFA, CentralityBC} using so-called odd form algebras. These definitions, including crossed modules and Steinberg groups, are given in section 2. In section 3 we study in details parametrizations of root subgroups of an odd unitary group by the root system of type $\mathsf{BC}_\ell$, including changing it by a relative root system (<<root elimination>>). The family of generators and relations for relative unitary Steinberg groups is given in section 4 both in terms of abstract universal <<conjugacy calculus>> and in an explicit form, see proposition \ref{identities}. Unlike \cite{RelStLin}, we use the additional generators corresponding to relative roots from the beginning, so the relations take more compact form. Such generators may be expressed in terms of the ordinary generators $Z_{ij}$ and $Z_j$.

The proof that our relations are exhaustive occupies sections 5--7. In comparison with the linear case \cite{RelStLin} we prefer more abstract reasoning based on convex geometry instead of explicit calculations. The main idea is the same: in theorem \ref{root-elim} we show that our presentation is preserved under elimination of any root, then in theorem \ref{pres-stu} we construct actions of all root subgroups of the absolute Steinberg group on our abstract group, thus the abstract group may be identified with the relative Steinberg group. The main calculations essentially using the formalism of odd form rings (instead of abstract reasoning using only Chevalley's commutator formula) are given in lemmas \ref{ush-new-root} and \ref{sh-new-root}, where the missing root generators are constructed. If the rank of the orthogonal hyperbolic family (i.e. the corresponding root system) is at least $4$, then the proof of lemma \ref{sh-new-root} simplifies a bit at the very end. Namely, the technical lemma \ref{associator} turns out to be redundant.

For future possible applications we give in section \ref{even-case} an explicit presentation of relative unitary Steinberg groups in the case of matrix ``even'' unitary groups of Anthony Bak \cite{Bak}, including symplectic and even orthogonal groups (theorem \ref{pres-even-stu}). In the concluding sections 9--10 the main results are transferred from unitary groups to Chevalley groups. For the types $\mathsf B_\ell$ and $\mathsf C_\ell$ this reduces to a construction of corresponding odd form rings. In the exceptional case of $\mathsf F_4$ we use an argument in the spirit of \cite[theorem 9]{CentralityE}.

\section{Relative unitary Steinberg groups}

We use the group-theoretical notation \(\up gh = g h g^{-1}\) and \([g, h] = ghg^{-1}h^{-1}\). If a group \(G\) acts on a set \(X\), then we usually denote the action by \((g, x) \mapsto \up gx\). If \(X\) is itself a group, then \([g, x] = \up gx x^{-1}\) and \([x, g] = x (\up gx)^{-1}\) are the commutators in \(X \rtimes G\). A \textit{group-theoretical crossed module} is a homomorphism \(\delta \colon N \to G\) of groups such that there is a fixed action of \(G\) on \(N\), \(\delta\) is \(G\)-equivariant, and \(\up nn' = \up{\delta(n)}{n'}\) for \(n, n' \in N\).

The group operation in a \(2\)-step nilpotent group \(G\) is usually denoted by \(\dotplus\). If \(X_1\), \ldots, \(X_n\) are subsets of \(G\) containing \(\dot 0\) and \(\prod_i X_i \to G, (x_1, \ldots, x_n) \mapsto x_1 \dotplus \ldots \dotplus x_n\) is a bijection, then we write \(G = \bigoplus_i^\cdot X_i\).

Let \(A\) be an associative unital ring and \(\lambda \in A^*\). A map \(\inv{(-)} \colon A \to A\) is called a \(\lambda\)-\textit{involution} if it is an anti-automorphism, \(\inv{\inv x} = \lambda x \lambda^{-1}\), and \(\inv \lambda = \lambda^{-1}\). For a fixed \(\lambda\)-involution a map \(B \colon M \times M \to A\) for a module \(M_A\) is called a \textit{hermitian form} if it is biadditive, \(B(m, m'a) = B(m, m')a\), and \(B(m', m) = \inv{B(m, m')} \lambda\).

Now let \(A\) be an associative unital ring with a \(\lambda\)-involution and \(M_A\) be a module with a hermitian form \(B\). The \textit{Heisenberg group} of \(B\) is the set \(\Heis(B) = M \times A\) with the group operation \((m, x) \dotplus (m', x') = (m + m', x - B(m, m') + x')\). The multiplicative monoid \(A^\bullet\) acts on \(\Heis(B)\) from the right by \((m, x) \cdot y = (my, \inv y x y)\). An \(A^\bullet\)-invariant subgroup \(\mathcal L \leq \Heis(B)\) is called an \textit{odd form parameter} if
\[\{(0, x - \inv x \lambda) \mid x \in A\} \leq \mathcal L \leq \{(m, x) \mid x + B(m, m) + \inv x \lambda = 0\}.\]
The corresponding \textit{quadratic form} is the map \(q \colon M \to \Heis(B) / \mathcal L, m \mapsto (m, 0) \dotplus \mathcal L\). Finally, the \textit{unitary group} is
\[\unit(M, B, \mathcal L) = \{g \in \Aut(M_A) \mid B(gm, gm') = B(m, m'),\, q(gm) = q(m) \text{ for all } m, m' \in M\}.\]

Recall definitions from \cite{CentralityBC, ClassicOFA}. An \textit{odd form ring} is a pair \((R, \Delta)\), where \(R\) is an associative non-unital ring, \(\Delta\) is a group with the group operation \(\dotplus\), the multiplicative semigroup \(R^\bullet\) acts on \(\Delta\) from the right by \((u, a) \mapsto u \cdot a\), and there are maps \(\phi \colon R \to \Delta\), \(\pi \colon \Delta \to R\), \(\rho \colon \Delta \to R\) such that
\begin{itemize}
\item \(\phi\) is a group homomorphism, \(\phi(\inv aba) = \phi(b) \cdot a\);
\item \(\pi\) is a group homomorphism, \(\pi(u \cdot a) = \pi(u) a\);
\item \([u, v] = \phi(-\inv{\pi(u)} \pi(v))\);
\item \(\rho(u \dotplus v) = \rho(u) - \inv{\pi(u)} \pi(v) + \rho(v)\), \(\inv{\rho(u)} + \inv{\pi(u)} \pi(u) + \rho(u) = 0\), \(\rho(u \cdot a) = \inv a \rho(u) a\);
\item \(\pi(\phi(a)) = 0\), \(\rho(\phi(a)) = a - \inv a\);
\item \(\phi(a + \inv a) = \phi(\inv aa) = 0\) (in \cite{CentralityBC, ClassicOFA} we used the stronger axiom \(\phi(a) = \dot 0\) for all \(a = \inv a\));
\item \(u \cdot (a + b) = u \cdot a \dotplus \phi(\inv{\,b\,} \rho(u) a) \dotplus u \cdot b\).
\end{itemize}

Let \((R, \Delta)\) be an odd form ring. Its \textit{unitary group} is the set
\[\unit(R, \Delta) = \{g \in \Delta \mid \pi(g) = \inv{\rho(g)}, \pi(g) \inv{\pi(g)} = \inv{\pi(g)} \pi(g)\}\]
with the identity element \(1_{\unit} = \dot 0\), the group operation \(gh = g \cdot \pi(h) \dotplus h \dotplus g\), and the inverse \(g^{-1} = \dotminus g \cdot \inv{\pi(g)} \dotminus g\). The unitary groups of odd form rings in \cite{CentralityBC, ClassicOFA} are precisely the graphs of \(\pi \colon \unit(R, \Delta) \to R\) as subsets of \(R \times \Delta\).

An odd form ring \((R, \Delta)\) is called \textit{special} if the homomorphism \((\pi, \rho) \colon \Delta \to \Heis(R)\) is injective, where \(\Heis(R) = R \times R\) with the operation \((x, y) \dotplus (z, w) = (x + z, y - \inv xz + w)\). It is called \textit{unital} if \(R\) is unital and \(u \cdot 1 = u\) for \(u \in \Delta\). In other words, \((R, \Delta)\) is a special unital odd form ring if and only if \(R\) is a unital associative ring with \(1\)-involution and \(\Delta\) is an odd form parameter with respect to the \(R\)-module \(R\) and the hermitian form \(R \times R \to R, (x, y) \mapsto \inv xy\), where we identify \(\Delta\) with its image in \(\Heis(R)\).

If \(M_A\) is a module over a unital ring with a \(\lambda\)-involution, \(B\) is a hermitian form on \(M\), and \(\mathcal L\) is an odd form parameter, then there is a special unital odd form ring \((R, \Delta)\) such that \(\unit(M, B, \mathcal L) \cong \unit(R, \Delta)\), see \cite[section 2]{CentralityBC} or \cite[section 3]{ClassicOFA} for details. The even case is also considered in section \ref{even-case}.

We say that an odd form ring \((R, \Delta)\) \textit{acts} on an odd form ring \((S, \Theta)\) if there are multiplication operations \(R \times S \to S\), \(S \times R \to S\), \(\Theta \times R \to \Theta\), \(\Delta \times S \to \Delta\) such that \((S \rtimes R, \Theta \rtimes \Delta)\) is a well-defined odd form ring. There is an equivalent definition in terms of explicit axioms on the operations, see \cite[section 2]{CentralityBC}. For example, each odd form ring naturally acts on itself. Actions of \((R, \Delta)\) on \((S, \Theta)\) are in one-to-one correspondence with isomorphism classes of right split short exact sequences
\[(S, \Theta) \to (S \rtimes R, \Theta \rtimes \Delta) \leftrightarrows (R, \Delta),\]
since the category of odd form rings and their homomorphisms is algebraically coherent semi-abelian in the sense of \cite{AlgCoh}.

Let us call \(\delta \colon (S, \Theta) \to (R, \Delta)\) a \textit{precrossed module} of odd form rings if \((R, \Delta)\) acts on \((S, \Theta)\) and \(\delta\) is a homomorphism preserving the action of \((R, \Delta)\). Such objects are in one-to-one correspondence with \textit{reflexive graphs} in the category of odd form rings, i.e. tuples \(((R, \Delta), (T, \Xi), p_1, p_2, d)\), where \(p_1, p_2 \colon (T, \Xi) \to (R, \Delta)\) are homomorphisms with a common section \(d\). Namely, \((S, \Theta)\) corresponds to the kernel of \(p_2\) and \(\delta\) is induced by \(p_1\).

A precrossed module \(\delta \colon (S, \Theta) \to (R, \Delta)\) is a \textit{crossed module} of odd form rings if \textit{Peiffer identities}
\begin{itemize}
\item \(ab = \delta(a) b = a \delta(b)\) for \(a, b \in S\);
\item \(u \cdot a = \delta(u) \cdot a = u \cdot \delta(a)\) for \(u \in \Theta\), \(a \in S\)
\end{itemize}
hold. Equivalently, the corresponding reflexive graph is an \textit{internal category} (and even an \textit{internal groupoid}, necessarily in a unique way), i.e. there is a homomorphism
\[m \colon \lim\bigl( (T, \Xi) \xrightarrow{p_2} (R, \Delta) \xleftarrow{p_1} (T, \Xi) \bigr) \to (T, \Xi)\]
such that the homomorphisms from any \((I, \Gamma)\) to \((R, \Delta)\) and \((T, \Xi)\) form a set-theoretic category. See \cite{XMod} for details.

The unitary group \(\unit(R, \Delta)\) acts on \((R, \Delta)\) by automorphisms via
\[\up g a = \alpha(g)\, a\, \inv{\alpha(g)} \text{ for } a \in R, \enskip \up g u = (g \cdot \pi(u) \dotplus u) \cdot \inv{\alpha(g)} \text{ for } u \in \Delta,\]
where \(\alpha(g) = \pi(g) + 1 \in R \rtimes \mathbb Z\). The second formula also gives the conjugacy action of \(\unit(R, \Delta)\) on itself.

If \((R, \Delta)\) acts on \((S, \Theta)\), then \(\unit(R, \Delta)\) acts on \(\unit(S, \Theta)\) in the sense of groups and \(\unit(S \rtimes R, \Theta \rtimes \Delta) = \unit(S, \Theta) \rtimes \unit(R, \Delta)\). For any crossed module \(\delta \colon (S, \Theta) \to (R, \Delta)\) the induced homomorphism \(\delta \colon \unit(S, \Theta) \to \unit(R, \Delta)\) is a crossed module of groups.

Recall from \cite{CentralityBC} that a \textit{hyperbolic pair} in an odd form ring \((R, \Delta)\) is a tuple \(\eta = (e_-, e_+, q_-, q_+)\), where \(e_-\) and \(e_+\) are orthogonal idempotents in \(R\), \(\inv{e_-} = e_+\), \(q_\pm\) are elements of \(\Delta\), \(\pi(q_\pm) = e_\pm\), \(\rho(q_\pm) = 0\), and \(q_\pm = q_\pm \cdot e_\pm\). A sequence \(H = (\eta_1, \ldots, \eta_\ell)\) is called an \textit{strong orthogonal hyperbolic family} of rank \(\ell\), if \(\eta_i = (e_{-i}, e_i, q_{-i}, q_i)\) are hyperbolic pairs, the idempotents \(e_{|i|} = e_i + e_{-i}\) are orthogonal, and \(e_i \in R e_j R\) for all \(i, j \neq 0\).

From now on and until the end of section \ref{sec-pres}, we fix a crossed module \(\delta \colon (S, \Theta) \to (R, \Delta)\) of odd form rings and a strong orthogonal hyperbolic family \(H = (\eta_1, \ldots, \eta_\ell)\) in \((R, \Delta)\). We also use the notation
\[S_{ij} = e_i S e_j, \quad
\Theta^0_j = \{u \in \Theta \cdot e_j \mid e_k \pi(u) = 0 \text{ for all } 1 \leq |k| \leq \ell\}\]
for \(1 \leq |i|, |j| \leq \ell\), and similarly for the corresponding subgroups of \(R\) and \(\Delta\). Clearly,
\[S_{ij} R_{jk} = S_{ik}, \quad \Theta^0_i \cdot R_{ij} \dotplus \phi(S_{-j, j}) = \Theta^0_j.\]

An \textit{unrelativized Steinberg group} \(\stunit(S, \Theta)\) is the abstract group with the generators \(X_{ij}(a)\), \(X_j(u)\) for \(1 \leq |i|, |j| \leq \ell\), \(i \neq \pm j\), \(a \in S_{ij}\), \(u \in \Theta^0_j\), and the relations
\begin{align*}
X_{ij}(a) &= X_{-j, -i}(-\inv a); \\
X_{ij}(a)\, X_{ij}(b) &= X_{ij}(a + b); \\
X_j(u)\, X_j(v) &= X_j(u \dotplus v); \\
[X_{ij}(a), X_{kl}(b)] &= 1 \text{ for } j \neq k \neq -i \neq -l \neq j; \\
[X_{ij}(a), X_l(u)] &= 1 \text{ for } i \neq l \neq -j; \\
[X_{-i, j}(a), X_{ji}(b)] &= X_i(\phi(ab)); \\
[X_i(u), X_i(v)] &= X_i(\phi(-\inv{\pi(u)} \pi(v))); \\
[X_i(u), X_j(v)] &= X_{-i, j}(-\inv{\pi(u)} \pi(v)) \text{ for } i \neq \pm j; \\
[X_{ij}(a), X_{jk}(b)] &= X_{ik}(ab) \text{ for } i \neq \pm k; \\
[X_i(u), X_{ij}(a)] &= X_{-i, j}(\rho(u) a)\, X_j(\dotminus u \cdot (-a)).
\end{align*}

Of course, the group \(\stunit(S, \Theta)\) is functorial on \((S, \Theta)\). In particular, the homomorphism
\[\delta \colon \stunit(S, \Theta) \to \stunit(R, \Delta), X_{ij}(a) \mapsto X_{ij}(\delta(a)), X_j(u) \mapsto X_j(\delta(u))\]
is well-defined. There is also a canonical homomorphism
\begin{align*}
\stmap \colon \stunit(S, \Theta) &\to \unit(S, \Theta), \\
X_{ij}(a) &\mapsto T_{ij}(a) = q_i \cdot a \dotminus q_{-j} \cdot \inv a \dotminus \phi(a), \\
X_j(u) &\mapsto T_j(u) = u \dotminus \phi(\rho(u) + \pi(u)) \dotplus q_{-j} \cdot (\rho(u) - \inv{\pi(u)}).
\end{align*}

Let \(p_{i*} \colon \stunit(S \rtimes R, \Theta \rtimes \Delta) \to \stunit(R, \Delta)\) be the induced homomorphisms. The \textit{relative Steinberg group} is
\[\stunit(R, \Delta; S, \Theta) = \Ker(p_{2*}) / [\Ker(p_{1*}), \Ker(p_{2*})].\]
It is easy to see that it is a crossed module over \(\stunit(R, \Delta)\). The \textit{diagonal group} is
\[\diag(R, \Delta) = \{g \in \unit(R, \Delta) \mid g \cdot e_i \inv{\pi(g)} \dotplus q_i \cdot \inv{\pi(g)} \dotplus g \cdot e_i = \dot 0 \text{ for } 1 \leq |i| \leq \ell\},\]
it acts on \(\stunit(S, \Theta)\) by
\[\up g{T_{ij}(a)} = T_{ij}(\up ga), \quad \up g{T_j(u)} = T_j(\up gu).\]
Hence it also acts on the commutative diagram of groups
\[\xymatrix@R=30pt@C=90pt@!0{
\stunit(S, \Theta) \ar[r] \ar[dr]_{\delta} & \stunit(R, \Delta; S, \Theta) \ar[r]^{\stmap} \ar[d]_{\delta} & \unit(S, \Theta) \ar[r] \ar[d]_{\delta} & \mathrm{Aut}(S, \Theta) \\
& \stunit(R, \Delta) \ar[r]^{\stmap} & \unit(R, \Delta) \ar[ur] \ar[r] & \mathrm{Aut}(R, \Delta).
}\]

\section{Root systems of type \(\mathsf{BC}\)}

Let
\[\Phi = \{\pm \mathrm e_i \pm \mathrm e_j \mid 1 \leq i < j \leq \ell\} \cup \{\pm \mathrm e_i \mid 1 \leq i \leq \ell\} \cup \{\pm 2 \mathrm e_i \mid 1 \leq i \leq \ell\} \subseteq \mathbb R^\ell\]
be a \textit{root system} of type \(\mathsf{BC}_\ell\). For simplicity let also \(\mathrm e_{-i} = -\mathrm e_i\) for \(1 \leq i \leq \ell\). The \textit{roots} of \(\Phi\) are in one-to-one correspondence with the \textit{root subgroups} of \(\stunit(S, \Theta)\) as follows:
\begin{align*}
X_{\mathrm e_j - \mathrm e_i}(a) &= X_{ij}(a) \text{ for } a \in S_{ij}, i + j > 0,\\
X_{\mathrm e_i}(u) &= X_i(u) \text{ for } u \in \Theta^0_i,\\
X_{2 \mathrm e_i}(u) &= X_i(u) \text{ for } u \in \phi(S_{-i, i}).
\end{align*}
The image of \(X_\alpha\) is denoted by \(X_\alpha(S, \Theta)\). The \textit{Chevalley commutator formulas} from the definition of \(\stunit(S, \Theta)\) may be written as
\[[X_\alpha(\mu), X_\beta(\nu)] = \prod_{\substack{i \alpha + j \beta \in \Phi\\ i, j > 0}} X_{i\alpha + j\beta}(f_{\alpha \beta i j}(\mu, \nu))\]
for all non-anti-parallel \(\alpha, \beta \in \Phi\) and some universal expressions \(f_{\alpha \beta i j}\). It is also useful to set \(f_{\alpha \beta 0 1}(\mu, \nu) = \nu\) and \(f_{\alpha \beta 0 2}(\mu, \nu) = \dot 0\) if \(\beta\) is \textit{ultrashort} (i.e. of length \(1\)). In products we assume that the factor with such a root is the last one. The set of ultrashort roots is denoted by \(\Phi_{\mathrm{us}}\).

It is easy to see that \(\Ker(p_{2*}) \leq \stunit(S \rtimes R, \Theta \rtimes \Delta)\) is the group with the action of \(\stunit(R, \Delta)\) generated by \(\stunit(S, \Theta)\) with the additional relations
\[\up{X_\alpha(\mu)}{X_\beta(\nu)} = \prod_{\substack{i \alpha + j \beta \in \Phi\\ i \geq 0, j > 0}} X_{i\alpha + j\beta}(f_{\alpha \beta i j}(\mu, \nu))\]
for non-anti-parallel \(\alpha, \beta \in \Phi\), \(\mu \in R \cup \Delta\), \(\nu \in S \cup \Theta\). Hence the relative Steinberg group \(\stunit(R, \Delta; S, \Theta)\) is the crossed module over \(\stunit(R, \Delta)\) generated by \(\delta \colon \stunit(S, \Theta) \to \stunit(R, \Delta)\) with the same additional relations.

The \textit{Weyl group} \(\mathrm W(\mathsf{BC}_\ell) = (\mathbb Z / 2 \mathbb Z)^\ell \rtimes \mathrm S_\ell\) acts on the orthogonal hyperbolic family \(\eta_1, \ldots, \eta_\ell\) by permutations and sign changes (i.e. \((e_{-i}, e_i, q_{-i}, q_i) \mapsto (e_i, e_{-i}, q_i, q_{-i})\)), so the correspondence between roots and root subgroups is \(\mathrm W(\mathsf{BC}_\ell)\)-equivariant. Also, the hyperbolic pairs from \(H\) and the opposite ones are in one-to-one correspondence with the set of ultrashort roots, \(\eta_i\) corresponds to \(\mathrm e_i\), and \(\eta_{-i} = (e_i, e_{-i}, q_i, q_{-i})\) corresponds to \(\mathrm e_{-i}\) for \(1 \leq i \leq \ell\).

Recall that a subset \(\Sigma \subseteq \Phi\) is called \textit{closed} if \(\alpha, \beta \in \Sigma\) and \(\alpha + \beta \in \Phi\) imply \(\alpha + \beta \in \Sigma\). We say that a closed subset \(\Sigma \subseteq \Phi\) is \textit{saturated}, if \(\alpha \in \Sigma\) together with \(\frac 12 \alpha \in \Phi\) imply \(\frac 12 \alpha \in \Sigma\). If \(X \subseteq \Phi\), then \(\langle X \rangle\) is the smallest saturated subset of \(\Phi\) containing \(X\), \(\mathbb R X\) is the linear span of \(X\), and \(\mathbb R_+ X\) is the smallest convex cone containing \(X\). A saturated root subsystem \(\Psi \subseteq \Phi\) is a saturated subset such that \(\Psi = -\Psi\).

A closed subset \(\Sigma \subseteq \Phi\) is called \textit{special} if \(\Sigma \cap -\Sigma = \varnothing\). It is well-known that a closed subset of \(\Phi\) is special if and only if it is a subset of some system of positive roots. Hence the smallest saturated subset containing a special subset it also special. A root \(\alpha\) in a saturated special set \(\Sigma\) is called \textit{extreme} if it is indecomposable into a sum of two roots of \(\Sigma\) and in the case \(\alpha \in \Phi_{\mathrm{us}}\) the root \(2\alpha\) is not a sum of two distinct roots of \(\Sigma\). Every non-empty saturated special set contains an extreme root and if \(\alpha \in \Sigma\) is extreme, then \(\Sigma \setminus \langle \alpha \rangle\) is also a saturated special set. Notice that if \(\Sigma\) is a saturated special subset and \(u\) is an extreme ray of \(\mathbb R_+ \Sigma\) in the sense of convex geometry, then \(u\) contains an extreme root of \(\Sigma\).

If \(\Sigma \subseteq \Phi\) is a special subset, then the multiplication map
\[\prod_{\alpha \in \Sigma \setminus 2\Sigma} X_\alpha(S, \Theta) \to \stunit(S, \Theta)\]
is injective for any linear order on \(\Sigma \setminus 2\Sigma\) and its image \(\stunit(S, \Theta; \Sigma)\) is a subgroup of \(\stunit(S, \Theta)\) independent on the order. Moreover, the homomorphism \(\stunit(S, \Theta; \Sigma) \to \unit(S, \Theta)\) is injective. This follows from results of \cite[section 4]{CentralityBC}.

Let \(\Psi \subseteq \Phi\) be a saturated root subsystem. Consider the following binary relation on \(\Phi_{\mathrm{us}}\): \(\mathrm e_i \sim_\Psi \mathrm e_j\) if \(\mathrm e_i - \mathrm e_j \in \Psi \cup \{0\}\) and \(\mathrm e_j \notin \Psi\). Actually, this is a partial equivalence relation (i.e. symmetric and transitive), \(\mathrm e_i \sim_\Psi \mathrm e_j\) if and only if \(\mathrm e_{-i} \sim_\Psi \mathrm e_{-j}\), \(\mathrm e_i \not \sim_\Psi \mathrm e_{-i}\). Conversely, each partial equivalence relation on \(\Phi_{\mathrm{us}}\) with these properties arise from unique saturated root subsystem.

The image of \(\Phi \setminus \Psi\) in \(\mathbb R^\ell / \mathbb R \Psi\) is denoted by \(\Phi / \Psi\), in the case \(\Psi = \langle \alpha, -\alpha \rangle\) we write just \(\Phi / \alpha\). We associate with \(\Psi\) a new strong orthogonal hyperbolic family \(H / \Psi\) as follows. If \(E \subseteq \Phi_{\mathrm{us}}\) is an equivalence class with respect to \(\sim_\Psi\), then \(\eta_E\) is the sum of all hyperbolic pairs corresponding to the elements of \(E\), where a sum of two hyperbolic pairs is given by
\[(e_-, e_+, q_-, q_+) \oplus (e'_-, e'_+, q'_-, q'_+) = (e_- + e'_-, e_+ + e'_+, q_- + q'_-, q_+ + q'_+)\]
if \((e_- + e_+) (e'_- + e'_+) = 0\). The family \(H / \Psi\) consists of all \(\eta_E\), if we take only one equivalence class \(E\) from each pair of opposite equivalence classes (so \(H / \Psi\) does not contain opposite hyperbolic pairs). The Steinberg groups constructed by \(H / \Psi\) are denoted by \(\stunit(R, \Delta; \Phi / \Psi)\), \(\stunit(S, \Theta; \Phi / \Psi)\), and \(\stunit(R, \Delta; S, \Theta; \Phi / \Psi)\). In the case \(\Psi = \varnothing\) we obtain the original Steinberg groups. Now it is easy to see that \(\Phi / \Psi\) is a root system of type \(\mathsf{BC}_{\ell - \dim(\mathbb R \Psi)}\), it parametrizes the root subgroups of the corresponding Steinberg groups. Note that \(H / \Psi\) is defined only up to the action of \(\mathrm W(\Phi / \Psi)\).

Let us denote the map \(\Phi \setminus \Psi \to \Phi / \Psi\) by \(\pi_\Psi\). The preimage of a special subset of \(\Phi / \Psi\) is a special subset of \(\Phi\). There is a canonical group homomorphism \(F_\Psi \colon \stunit(S, \Theta; \Phi / \Psi) \to \stunit(S, \Theta; \Phi)\), it maps every root subgroup \(X_\alpha(S, \Theta)\) to \(\stunit(S, \Theta; \pi_\Psi^{-1}(\{\alpha, 2\alpha\} \cap \Phi / \Psi))\) in such a way that
\[\stmap \circ F_\Psi = \stmap \colon \stunit(S, \Theta; \Phi / \Psi) \to \unit(S, \Theta).\]
Of course, \(\{\alpha, 2\alpha\} \cap \Phi / \Psi\) coincides with \(\{\alpha, 2 \alpha\}\) for ultrashort \(\alpha\) and with \(\{\alpha\}\) otherwise. The map \(F_\Psi\) induces an isomorphism between \(\stunit(S, \Theta; \pi_\Psi^{-1}(\Sigma))\) and \(\stunit(S, \Theta; \Sigma)\) for any special $\Sigma \subseteq \Phi / \Psi$, so we identify such groups.

There are similarly defined natural homomorphisms \(F_\Psi \colon \stunit(R, \Delta; \Phi / \Psi) \to \stunit(R, \Delta; \Phi)\) and \(F_\Psi \colon \stunit(R, \Delta; S, \Theta; \Phi / \Psi) \to \stunit(R, \Delta; S, \Theta; \Phi)\). By \cite[propositions 1 and 2]{CentralityBC}, \(F_\Psi \colon \stunit(R, \Delta; \Phi / \alpha) \to \stunit(R, \Delta; \Phi)\) is an isomorphism for every root \(\alpha\) if \(\ell \geq 3\) (this also follows from theorem \ref{root-elim} proved below). The diagonal group \(\diag(R, \Delta; \Phi / \Psi)\) constructed by \(H / \Psi\) contains the root elements \(T_\alpha(\mu)\) for all \(\alpha \in \Psi\) and
\[F_\Psi\bigl(\up{T_\alpha(\mu)}{g}\bigr) = \up{X_\alpha(\mu)}{F_\Psi(g)} \in \stunit(R, \Delta; S, \Theta; \Phi)\]
for \(g \in \stunit(R, \Delta; S, \Theta; \Phi / \Psi)\), \(\alpha \in \Psi\).

Note that there is a one-to-one correspondence between the saturated root subsystems of \(\Phi\) containing a saturated root subsystem \(\Psi\) and the saturated root subsystems of \(\Phi / \Psi\). If \(\Psi \subseteq \Psi' \subseteq \Phi\) are two saturated root subsystems, then
\[\pi_{\Psi} \circ \pi_{\Psi' / \Psi} = \pi_{\Psi'} \colon \stunit(S, \Theta; \Phi / \Psi') \to \stunit(S, \Theta; \Phi).\]

Let \(e_{i \oplus j} = e_i + e_j\), \(q_{i \oplus j} = q_i \dotplus q_j\), \(e_{\ominus i} = e_{-i} + e_0 + e_i\). There are new root homomorphisms
\begin{align*}
X_{i, \pm (l \oplus m)} &\colon S_{i, \pm (l \oplus m)} = S_{i, \pm l} \oplus S_{i, \pm m} \\
&\to \stunit(S, \Theta; \Phi / (\mathrm e_m - \mathrm e_l)) \text{ for } i \notin \{0, \pm l, \pm m\}; \\
X_{\pm (l \oplus m), j} &\colon S_{\pm (l \oplus m), j} = S_{\pm l, j} \oplus S_{\pm m, j} \\
&\to \stunit(S, \Theta; \Phi / (\mathrm e_m - \mathrm e_l)) \text{ for } j \notin \{0, \pm l, \pm m\}; \\
X_{\pm(l \oplus m)} = X^0_{\pm(l \oplus m)} &\colon \Delta^0_{\pm(l \oplus m)} = \Theta^0_{\pm l} \dotoplus \phi(S_{\mp l, \pm m}) \dotoplus \Theta^0_{\pm m} \\
&\to \stunit(S, \Theta; \Phi / (\mathrm e_m - \mathrm e_l)); \\
X^{\ominus m}_j &\colon \Theta^{\ominus m}_j = q_{-m} \cdot S_{-m, j} \dotoplus \Theta^0_j \dotoplus q_m \cdot S_{mj} \\
&\to \stunit(S, \Theta; \Phi / \mathrm e_m) \text{ for } j \notin \{0, \pm m\}.
\end{align*}
The remaining root homomorphisms of \(\stunit(S, \Theta; \Phi / \mathrm e_m)\) and \(\stunit(S, \Theta; \Phi / (\mathrm e_m - \mathrm e_l))\) are denoted by the usual \(X_{ij}\) and \(X_j = X^0_j\).

\section{Conjugacy calculus}

Let us say that a group \(G\) has a \textit{conjugacy calculus} with respect to the strong orthogonal hyperbolic family \(H\) if there is a family of maps
\[\stunit(R, \Delta; \Sigma) \times \stunit(S, \Theta; \Phi) \to G, (g, h) \mapsto \up g{\{h\}_\Sigma}\]
parameterized by a saturated special subset \(\Sigma \subseteq \Phi\) such that
\begin{itemize}
\item[(Hom)] \(\up g{\{h_1 h_2\}_\Sigma} = \up g{\{h_1\}_\Sigma}\, \up g{\{h_2\}_\Sigma}\);
\item[(Sub)] \(\up g{\{h\}_{\Sigma'}} = \up g{\{h\}_\Sigma}\) if \(\Sigma' \subseteq \Sigma\);
\item[(Chev)] \(\up{g X_{\alpha}(\mu)}{\{X_{\beta}(\nu)\}_\Sigma} = \up g{\bigl\{\prod_{\substack{i \alpha + j \beta \in \Phi\\ i \geq 0, j > 0}} X_{i \alpha + j \beta}(f_{\alpha \beta i j}(\mu, \nu))\bigr\}_\Sigma}\) if \(\alpha, \beta\) are non-anti-parallel and \(\alpha \in \Sigma\);
\item[(XMod)] \(\up{X_\alpha(\delta(\mu))\, g}{\{h\}_\Sigma} = \up{\up 1{\{X_\alpha(\mu)\}_\Sigma}}{\bigl(\up g{\{h\}_\Sigma}\bigr)}\) if \(\alpha \in \Sigma\);
\item[(Conj)] \(\up{\up{g_1}{\{X_\alpha(\mu)\}_{\Sigma'}}}{\bigl(\up{F_\Psi(g_2)}{\{F_\Psi(h)\}_{\pi_\Psi^{-1}(\Sigma)}}\bigr)} = \up{F_\Psi(\up{\delta(f)}{g_2})}{\{F_\Psi(\up f {h})\}_{\pi_\Psi^{-1}(\Sigma)}}\) if \(\Psi \subseteq \Phi\) is a saturated root subsystem, \(\Sigma \subseteq \Phi / \Psi\) is a saturated special subset, \(\alpha \in \Psi\), \(\Sigma' \subseteq \Psi\), \(f = \up{\stmap(g_1)}{T_\alpha(\mu)} \in \unit(S, \Theta)\).
\end{itemize}
The axiom (Sub) implies that we may omit the subscript \(\Sigma\) in the maps \((g, h) \mapsto \up g{\{h\}_\Sigma}\).

If \(G\) has a conjugacy calculus with respect to \(H\), then we define the elements
\begin{align*}
Z_{ij}(a, p) &= \up{X_{ji}(p)}{\{X_{ij}(a)\}};
& X_{ij}(a) &= Z_{ij}(a, 0); \\
Z_j(u, s) &= \up{X_{-j}(s)}{\{X_j(u)\}};
& X_j(u) &= Z_j(u, \dot 0); \\
Z_{i, j \oplus k}(a, p) &= \up{X_{j \oplus k, i}(p)}{\{F_{\mathrm e_k - \mathrm e_j}(X_{i, j \oplus k}(a))\}};
& X_{i, j \oplus k}(a) &= Z_{i, j \oplus k}(a, 0); \\
Z_{i \oplus j, k}(a, p) &= \up{X_{k, i \oplus j}(p)}{\{F_{\mathrm e_j - \mathrm e_i}(X_{i \oplus j, k}(a))\}};
& X_{i \oplus j, k}(a) &= Z_{i \oplus j, k}(a, 0); \\
Z_{i \oplus j}(u, s) &= \up{X_{-(i \oplus j)}(s)}{\{F_{\mathrm e_j - \mathrm e_i}(X_{i \oplus j}(u))\}};
& X_{i \oplus j}(u) &= Z_{i \oplus j}(u, \dot 0);\\
Z^{\ominus i}_j(u, s) &= \up{X^{\ominus i}_{-j}(s)}{\{F_{\mathrm e_i}(X^{\ominus i}_j(u))\}};
& X^{\ominus i}_j(u) &= Z^{\ominus i}_j(u, \dot 0).
\end{align*}
of \(G\). Since \(\stunit(R, \Delta; S, \Theta)\) and \(\unit(S, \Theta)\) have natural conjugacy calculi with respect to \(H\), we use the notation \(Z_{ij}(a, p)\) and \(Z_j(u, s)\) for the corresponding elements in these groups.

\begin{prop} \label{identities}
Suppose that a group \(G\) has a conjugacy calculus with respect to \(H\). Then the following identities hold:
\begin{itemize}
\item[(Sym)] \(Z_{ij}(a, p) = Z_{-j, -i}(-\inv a, -\inv p)\), \(Z_{i \oplus j, k}(a, p) = Z_{-k, (-i) \oplus (-j)}(-\inv a, -\inv p) = Z_{j \oplus i, k}(a, p)\), \(Z_{i \oplus j}(u, s) = Z_{j \oplus i}(u, s)\), and \(Z_j^{\ominus i}(u, s) = Z_j^{\ominus (-i)}(u, s)\);
\item[(Add)] The maps \(Z_{ij}\), \(Z_j\), \(Z_{i \oplus j, k}\), \(Z_{i, j \oplus k}\), \(Z_{i \oplus j}\), \(Z^{\ominus i}_j\) are homomorphisms on the first variables.
\item[(Comm)]
 \begin{enumerate}
 \item \([Z_{ij}(a, p), Z_{kl}(b, q)] = 1\) if \(\pm i, \pm j, \pm k, \pm l\) are distinct,
 \item \([Z_{ij}(a, p), Z_l(b, q)] = 1\) if \(\pm i, \pm j, \pm l\) are distinct,
 \item \(\up{Z_{ij}(a, p)}{Z_{i \oplus j, k}(b, q)} = Z_{i \oplus j, k}\bigl(\up{Z_{ij}(a, p)} b, \up{Z_{ij}(a, p)} q\bigr)\),
 \item \(\up{Z_{ij}(a, p)}{Z_{i \oplus j}(u, s)} = Z_{i \oplus j}\bigl(\up{Z_{ij}(a, p)} u, \up{Z_{ij}(a, p)} s\bigr)\),
 \item \(\up{Z_i(u, s)}{Z^{\ominus i}_j(v, t)} = Z^{\ominus i}_j\bigl(\up{Z_i(u, s)} v, \up{Z_i(u, s)} t\bigr)\);
 \end{enumerate}
\item[(Simp)]
 \begin{enumerate}
 \item \(Z_{ik}(a, p) = Z_{i \oplus j, k}(a, p)\);
 \item \(Z_{ij}(a, p) = Z_{(-i) \oplus j}(\phi(a), \phi(p))\);
 \item \(Z_j(u, s) = Z^{\ominus i}_j(u, s)\);
 \end{enumerate}
\item[(HW)]
 \begin{enumerate}
 \item \(Z_{j \oplus k, i}\bigl(\up{T_{jk}(r)} a, p + q\bigr) = Z_{k, i \oplus j}\bigl(\up{T_{ij}(p)} a, \up{T_{ij}(p)}{(q + r)}\bigr)\) for \(a \in S_{ki}\), \(p \in R_{ij}\), \(q \in R_{ik}\), \(r \in R_{jk}\),
 \item \(Z_{-j \oplus -i}\bigl(\up{T_{ij}(q)}{(u \dotplus \phi(a))}, s \dotplus \phi(p) \dotplus t\bigr) = Z^{\ominus i}_{-j}\bigl(\up{T_i(s)}{(u \dotplus q_i \cdot a)}, \up{T_i(s)}{(q_{-i} \cdot p \dotplus t \dotplus q_i \cdot q)}\bigr)\) for \(u \in \Theta^0_{-j}\), \(a \in S_{i, -j}\), \(s \in \Delta^0_i\), \(p \in R_{-i, j}\), \(t \in \Delta^0_j\), \(q \in R_{ij}\);
 \end{enumerate}
\item[(Delta)]
 \begin{enumerate}
 \item \(Z_{ij}(a, \delta(b) + p) = \up{Z_{ji}(b, 0)}{Z_{ij}(a, p)}\),
 \item \(Z_j(u, \delta(v) \dotplus s) = \up{Z_{-j}(v, \dot 0)}{Z_j(u, s)}\).
 \end{enumerate}
\end{itemize}
Conversely, suppose that \(G\) is a group with the elements \(Z_{ij}(a, p)\), \(Z_j(u, s)\), \(Z_{i \oplus j, k}(a, p)\), \(Z_{i, j \oplus k}(a, p)\), \(Z_{i \oplus j}(u, s)\), \(Z^{\ominus i}_j(u, s)\) satisfying the identities above. Then \(G\) has a unique conjugacy calculus with respect to \(H\) such that the distinguished elements coincide with the corresponding expressions from the conjugacy calculus.
\end{prop}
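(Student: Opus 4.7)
The plan is to prove the two implications separately. The forward direction consists of checking each identity by unwinding the definitions of the $Z$-elements and applying the five axioms (Hom), (Sub), (Chev), (XMod), (Conj). Specifically, (Add) follows from (Hom) combined with the additive Steinberg relation $X_{ij}(a) X_{ij}(b) = X_{ij}(a+b)$ in the target of the curly braces; (Sym) from the Steinberg identifications $X_{ij}(a) = X_{-j,-i}(-\inv a)$ together with the $\mathrm{W}(\mathsf{BC}_\ell)$-equivariance of the indexing of root subgroups; (Simp) from (Sub) once one identifies a common special $\Sigma$ containing the relevant roots; (Delta) from (XMod) with trivial $g$; and (Comm)(1)--(2) from (Chev) applied to pairs of roots for which the Chevalley commutator is trivial. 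Parts (3)--(5) of (Comm) reduce to (Chev) and (Conj) after rewriting the $Z_{i\oplus j, k}$, $Z_{i \oplus j}$, and $Z^{\ominus i}_j$ in terms of root elements of $\stunit(S,\Theta; \Phi/\Psi)$.

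The Hall--Witt-type identities (HW) are the most delicate part of the forward direction. I would derive them using (Conj) applied to the saturated root subsystems $\Psi = \langle \mathrm e_j - \mathrm e_i \rangle$ for (HW)(1) and $\Psi = \langle \mathrm e_i \rangle$ for (HW)(2): one chooses $g_1$ and $g_2$ so that the two conjugation composites produced by (Conj) correspond, through the Steinberg relations inside $\unit(S,\Theta)$, to the two sides of (HW). The axiom (Conj) is the only one capable of exchanging a rank-$1$ conjugation with a conjugation coming from the reduced system $\Phi / \Psi$, which is exactly the exchange encoded by the equalities in (HW).

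For the converse I would construct the maps $(g, h) \mapsto \up g{\{h\}_\Sigma}$ by double induction on the lengths of $g \in \stunit(R,\Delta;\Sigma)$ and $h \in \stunit(S,\Theta;\Phi)$ as products of root generators. By (Hom) it suffices to define $\up g{\{X_\beta(\nu)\}_\Sigma}$, and writing $g = g' \cdot X_\alpha(\mu)$ with $\alpha \in \Sigma$, I use (Chev) to push $X_\alpha(\mu)$ through the braces when $\alpha$ and $\beta$ are non-antiparallel, and identify the result with the appropriate $Z$-element from the given data otherwise. Well-definedness against the defining relations of $\stunit(R,\Delta;\Sigma)$ (resp.\ $\stunit(S,\Theta;\Phi)$) is precisely what the six families of identities guarantee: (Add) and (Sym) handle the additive and symmetry Steinberg relations, (Simp) compatibility with (Sub), (Comm) the Chevalley commutators between non-interacting roots, (Delta) the crossed-module Peiffer identities implementing (XMod), and (HW) the non-trivial rewriting in rank-$2$ subsystems. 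The five axioms (Hom), (Sub), (Chev), (XMod), (Conj) are then verified in turn on generators, where each reduces essentially by construction to one of the listed identities.

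The main obstacle will be the well-definedness in the converse direction: when $g$ is rewritten by applying a Chevalley commutator to two adjacent factors $X_{\alpha_1}(\mu_1) X_{\alpha_2}(\mu_2)$, the expression $\up g{\{X_\beta(\nu)\}_\Sigma}$ computed by the inductive recipe must not change. This consistency check localises to the saturated special subsets generated by $\{\alpha_1, \alpha_2, \beta\}$ and by $\{\alpha_1, \alpha_2, \beta, 2\beta\}$ when $\beta$ is ultrashort, i.e.\ to a finite list of configurations inside rank-$3$ root subsystems; (HW) is precisely the axiom that forces the two rewriting strategies through such a subsystem to agree, and (Conj) for two-element $\Psi$ allows the general case to be reduced to the rank-$2$ sub-configurations already covered by (Comm), (HW), and (Simp).
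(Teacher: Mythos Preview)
Your forward direction is serviceable, though the attribution of (HW) to (Conj) is not right: (Conj) has an outer conjugation by a $Z$-type element on its left-hand side, while neither side of (HW) is of that shape (taking $\mu=0$ makes (Conj) tautological, and any nontrivial choice inserts a conjugation absent from (HW)). In fact (HW) follows from (Hom), (Sub), (Chev) alone: both sides of (HW)(1) equal $\up{X_{ij}(p)\,X_{ik}(q)\,X_{jk}(r)}{\{X_{ki}(a)\}_\Sigma}$ with $\Sigma$ the full positive half of the $\mathsf A_2$ on $\mathrm e_j-\mathrm e_i,\mathrm e_k-\mathrm e_j$, reached by applying (Chev) first to the extreme root $\mathrm e_k-\mathrm e_j$ versus $\mathrm e_j-\mathrm e_i$; the $\mathsf{BC}_2$ case (HW)(2) is analogous. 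This is not fatal for the forward implication, but it matters for the converse, where (HW) must function as the \emph{only} new input at the rank-$2$ bottleneck.

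The converse has a genuine gap. Inducting on the word length of $g$ does not work as you describe: if the rightmost factor $X_\alpha(\mu)$ has $\alpha=-\beta$, then (Chev) is unavailable, and replacing $\up{X_\alpha(\mu)}{\{X_\beta(\nu)\}}$ by the $Z$-element $Z_\beta(\nu,\mu)$ leaves you with an expression $\up{g'}{(Z_\beta(\nu,\mu))}$ to which your recursion does not apply (the inner argument now lies in $G$, not in $\stunit(S,\Theta;\Phi)$). The paper's fix is to induct on $|\Sigma|$ instead: since $\stunit(R,\Delta;\Sigma)$ is the product of its root subgroups, one may always write $g=g'\,X_\gamma(\mu)$ with $\gamma$ an \emph{extreme} root of $\Sigma$ chosen so that $\gamma\neq-\beta$ (this is possible unless $\Sigma=\langle-\beta\rangle$, in which case the $Z$-element is used directly), and then $\up g{\{X_\beta(\nu)\}_\Sigma}=\up{g'}{\{\up{X_\gamma(\mu)}X_\beta(\nu)\}_{\Sigma\setminus\langle\gamma\rangle}}$ with strictly smaller $\Sigma$. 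The only consistency check is independence of the choice of extreme root, and this localizes to rank-$2$ subsystems of type $\mathsf A_2$ or $\mathsf{BC}_2$ (not rank $3$): that is precisely what (HW) encodes. A separate pass then verifies that $\up g{\{-\}_\Sigma}$ factors through the Steinberg relations on the second argument; the nontrivial cases again reduce to the same two rank-$2$ computations, handled by (Add) and (Comm).
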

\begin{proof}
If \(G\) has a conjugacy calculus, then all the identities may be proved by direct calculations. In particular, (Comm) follows from (Conj), (Delta) follows from (XMod), and the remaining ones follow from (Hom), (Sub), (Chev).

To prove the converse let us define the elements \(X_{i \oplus j, k}(a)\), \(X_{i, j \oplus k}(a)\), \(X_{i \oplus j}(u)\), \(X_j^{\ominus i}(u)\), \(X_{ij}(a)\), and \(X_j(u)\) as the corresponding generators with zero second argument. Notice that (Add) implies
\begin{align*}
Z_{ij}(0, p) &= 1, &
Z_{i \oplus j, k}(0, p) &= 1, &
Z_{i, j \oplus k}(0, p) &= 1, \\
Z_j(\dot 0, s) &= 1, &
Z_{i \oplus j}(\dot 0, s) &= 1, &
Z^{\ominus i}_j(\dot 0, s) &= 1.
\end{align*}
Together with (Sym), (Add), (Simp), and (HW) it follows that \(X_{i \oplus j, k}(a)\), \(X_{i, j \oplus k}(a)\), \(X_{i \oplus j}(u)\), and \(X^{\ominus i}_j(u)\) may be expressed in terms of the root elements \(X_{ij}(a)\) and \(X_j(u)\) in the natural way. Also, (Comm) implies that the root elements satisfy the Chevalley commutator formulas. It is also easy to see that (Sym), (Add), (Simp), and (HW) give some canonical expressions of all the distinguished elements in terms of \(Z_{ij}(a, p)\) and \(Z_j(u, s)\).

We explicitly construct the maps \((g, h) \mapsto \up g{\{h\}_\Sigma}\) by induction on \(|\Sigma|\), the case \(\Sigma = \varnothing\) is trivial. Simultaneously we show that \(\up g{\{h\}_\Sigma}\) evaluates to a distinguished element if \(\Sigma\) is strictly contained in a system of positive roots of a rank \(2\) saturated root subsystem and \(h \in \unit(S, \Theta; -\Sigma)\). Hence from now on we assume that \(\Sigma \subseteq \Phi\) is a saturated special subset and there are unique maps \(\up g{\{h\}_{\Sigma'}}\) for all \(\Sigma'\) with \(|\Sigma'| < |\Sigma|\) satisfying the axioms (Hom), (Sub), and (Chev).

Firstly, we construct \(\up g{\{X_\alpha(\mu)\}_\Sigma}\), where \(\alpha \in \Phi \setminus 2 \Phi\) if a fixed root. If there is an extreme root \(\beta \in \Sigma \setminus 2 \Sigma\) such that \(\beta \neq -\alpha\), then we define
\[\up{g X_\beta(\nu)}{\{X_\alpha(\mu)\}_\Sigma} = \up g{\bigl\{\up{X_\beta(\nu)}{X_\alpha(\mu)}\bigr\}_{\Sigma \setminus \langle \beta \rangle}}\]
for any \(g \in \stunit(R, \Delta; \Sigma \setminus \langle \beta \rangle)\), where \(\up{X_\beta(\nu)}{X_\alpha(\mu)}\) denotes the element \(\prod_{\substack{i\beta + j\alpha \in \Phi\\ i \geq 0, j > 0}} X_{i \beta + j \alpha}(f_{\beta \alpha i j}(\nu, \mu)) \in \stunit(S, \Theta)\). By (HW), this definition gives the distinguished element for appropriate \(\Sigma\).

Let us check that the definition is correct, i.e. if \(\beta, \gamma \in \Sigma \setminus 2 \Sigma\) are two extreme roots, \(\beta, \gamma, -\alpha\) are distinct, and \(g \in \stunit(R, \Delta; \Sigma \setminus (\langle \beta \rangle \cup \langle \gamma \rangle))\), then
\[\up{g X_\beta(\nu)}{\bigl\{\up{X_\gamma(\lambda)}{X_\alpha(\mu)}\bigr\}_{\Sigma \setminus \langle \gamma \rangle}} = \up{g [X_\beta(\nu), X_\gamma(\lambda)]\, X_\gamma(\lambda)}{\bigl\{\up{X_\beta(\nu)}{X_\alpha(\mu)}\bigr\}_{\Sigma \setminus \langle \beta \rangle}}.\]
If \(\langle \alpha, \beta, \gamma \rangle\) is special, then this claim easily follows. Else these roots lie in a common saturated root subsystem \(\Phi_0\) of type \(\mathsf A_2\) or \(\mathsf{BC}_2\). We may assume that \(\Sigma = \langle \beta, \gamma \rangle\), otherwise there is an extreme root in \(\Sigma\) but not in \(\Phi_0\). But then this is a simple corollary of (HW).

The above definition cannot be used if \(\Sigma = \langle -\alpha \rangle\). In this case we just define
\[\up{X_{ji}(p)}{\{X_{ij}(a)\}_{\langle \mathrm e_j - \mathrm e_i \rangle}} = Z_{ij}(a, p), \quad \up{X_{-i}(s)}{\{X_i(u)\}_{\langle \mathrm e_i \rangle}} = Z_i(u, s).\]

Now let us check that the map \((g, h) \to \up g{\{h\}_\Sigma}\) is well-defined, i.e. factors through the Steinberg relations on the second argument. By construction, it factors through the homomorphism property of the root elements. Let us check that if also factors through the Chevalley commutator formula for \([X_\alpha(\mu), X_\beta(\nu)]\), where \(\alpha, \beta\) are linearly independent roots. If there is an extreme root \(\gamma \in \Sigma\) such that \(\langle \alpha, \beta, \gamma \rangle\) is special, then we may apply the construction of \(\up g{\{h\}_\Sigma}\) via \(\gamma\). Otherwise let \(\Phi_0 \subseteq \Phi\) be the rank \(2\) saturated root subsystem containing \(\alpha\), \(\beta\), \(\Sigma\). If \(\Phi_0\) is of type \(\mathsf A_1 \times \mathsf A_1\) or \(\mathsf A_1 \times \mathsf{BC}_1\), then \(\Sigma \subseteq \langle -\alpha, -\beta \rangle\) and we may apply the corresponding case of (Comm). If \(\Phi_0\) is of type \(\mathsf A_2\) or \(\mathsf{BC}_2\) and \(\langle \alpha, \beta \rangle\) is not its subsystem of positive roots, then we just apply (Add).

Consider the case where \(\Phi_0\) is of type \(\mathsf A_2\), \(\alpha, \beta, \alpha + \beta \in \Phi_0\), and \(\Sigma \subseteq \langle -\alpha, -\beta \rangle\). Without loss of generality, \(\alpha = \mathrm e_j - \mathrm e_i\) and \(\beta = \mathrm e_k - \mathrm e_j\). Then
\begin{align*}
\up{X_{ji}(p)\, X_{ki}(q)\, X_{kj}(r)}{\{X_{ij}(a)\, X_{jk}(b)\}_{\Sigma}} &= X_{k, i \oplus j}\bigl(\up{T_{ji}(p)}{(qa)}\bigr)\, Z_{ij}(a, p)\, Z_{i \oplus j, k}\bigl(b, \up{T_{ji}(p)}{(q + r)}\bigr) \\
&= Z_{i \oplus j, k}\bigl(\up{T_{ji}(p)\, T_{ij}(a)}{b}, \up{T_{ji}(p)}{(q + r)}\bigr)\, X_{k, i \oplus j}\bigl(\up{T_{ji}(p)}{(qa)}\bigr)\, Z_{ij}(a, p) \\
&= \up{X_{ji}(p)\, X_{ki}(q)\, X_{kj}(r)}{\{X_{jk}(b)\, X_{ik}(ab)\, X_{ij}(a)\}_\Sigma}.
\end{align*}

The remaining case is where \(\Phi_0\) is of type \(\mathsf{BC}_2\), \(\alpha, \beta, \alpha + \beta, 2\alpha + \beta \in \Phi_0\), and \(\Sigma \subseteq \langle -\alpha, -\beta \rangle\). Without loss of generality, \(\alpha = \mathrm e_i\) and \(\beta = \mathrm e_j - \mathrm e_i\). We have
\begin{align*}
\up{X_{-i}(s)\, X_{i, -j}(p)\, X_{-j}(t)\, X_{ji}(q)}{\{X_i(u)\, X_{ij}(a)\}_\Sigma} &= Z_{i \oplus j}(u, s \dotplus \phi(p) \dotplus t)\, Z^{\ominus i}_j\bigl(\up{T_{-i}(s)}{(q_i \cdot a)}, \up{T_{-i}(s)}{(q_i \cdot p \dotplus t \dotminus q_{-i} \cdot \inv q)}\bigr)\\
&= X^{\ominus i}_{-j}\bigl(\up{T_{-i}(s)}{[t \dotplus q_i \cdot p, T_i(u)]}\bigr)\, Z_i(u, s)\\
&\cdot Z^{\ominus i}_j\bigl(\up{T_{-i}(s)}{(q_i \cdot a)}, \up{T_{-i}(s)}{(q_i \cdot p \dotplus t \dotminus q_{-i} \cdot \inv q)}\bigr) \\
&= Z^{\ominus i}_j\bigl(\up{T_{-i}(s)}{(q_i \cdot a \dotplus u \cdot a \dotplus q_{-i} \cdot \rho(u) a)}, \up{T_{-i}(s)}{(q_i \cdot p \dotplus t \dotminus q_{-i} \cdot \inv q)}\bigr) \\
&\cdot Z_{i \oplus j}(u, s \dotplus \phi(p) \dotplus t) \\
&= \up{X_{-i}(s)\, X_{i, -j}(p)\, X_{-j}(t)\, X_{ji}(q)}{\{X_{ij}(a)\, X_j(u \cdot a)\, X_{-i, j}(\rho(u) a)\, X_i(u)\}_\Sigma}.
\end{align*}

Clearly, our map \((g, h) \mapsto \up g{\{h\}_\Sigma}\) satisfy the required properties and it is unique. The axiom (XMod) follows from the Steinberg relations and (Delta) if \(\Sigma\) is one-dimensional, the general case follows from the construction of \(\up g{\{h\}_\Sigma}\).

To prove the axiom (Conj), without loss of generality \(\Sigma' \subseteq \langle -\alpha \rangle\) and \(\Psi = \mathbb R \alpha \cap \Phi\). Applying (Hom) and (Chev) multiple times to the term \(\up{F_\Psi(g_2)}{\{F_\Psi(h)\}_{\pi^{-1}_\Psi(\Sigma)}}\), we reduce to the case where \(\Sigma\) is also one-dimensional. This is precisely (Comm).
\end{proof}

From now on let \(\overline{\stunit}(R, \Delta; S, \Theta; \Phi)\) be the universal group with a conjugacy calculus with respect to \(H\). It is the abstract group with the presentation given by proposition \ref{identities}. Clearly, for a saturated root subsystem \(\Psi \subseteq \Phi\) there is a homomorphism \(F_\Psi \colon \overline{\stunit}(R, \Delta; S, \Theta; \Phi / \Psi) \to \overline{\stunit}(R, \Delta; S, \Theta; \Phi)\), i.e. every group with a conjugacy calculus with respect to \(H\) also has a canonical conjugacy calculus with respect to \(H / \Psi\). We have a sequence of groups
\[\stunit(S, \Theta; \Phi) \to \overline{\stunit}(R, \Delta; S, \Theta; \Phi) \to \stunit(R, \Delta; S, \Theta; \Phi)\]
with the action of \(\diag(R, \Delta; \Phi)\). Our goal for the next several sections is to prove that the right arrow is an isomorphism.

\section{Lemmas about odd form rings}

The difficult part in the proof that the right arrow is an isomorphism is to construct an action of \(\stunit(R, \Delta; \Phi)\) on \(\overline{\stunit}(R, \Delta; S, \Theta; \Phi)\). In order to do this, we prove that \(F_\alpha \colon \overline{\stunit}(R, \Delta; S, \Theta; \Phi / \alpha) \to \overline{\stunit}(R, \Delta; S, \Theta; \Phi)\) is an isomorphism for all roots \(\alpha\). Since the root elements \(T_\alpha(\mu) \in \diag(R, \Delta; \Phi / \alpha)\) acts on \(\overline{\stunit}(R, \Delta; S, \Theta; \Phi / \alpha)\) by automorphisms, they induce some automorphisms of \(\overline{\stunit}(R, \Delta; S, \Theta; \Phi)\). In this section we prove the surjectivity of \(F_\alpha\) and several preparatory results.

\begin{lemma} \label{ring-pres}
If \(i, j, k \neq 0\), then the multiplication map
\[S_{ik} \otimes_{e_k R e_k} R_{kj} \to S_{ij}\]
is an isomorphism.
\end{lemma}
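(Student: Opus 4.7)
The plan is to use the strong orthogonal hyperbolic family hypothesis to obtain a ``partition-of-unity'' style decomposition of $e_j$ supported on $e_k$, and then use it both to construct preimages and to verify injectivity.

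First I would unpack the strong condition. Since $e_j \in R e_k R$, one can write $e_j = \sum_\ell a_\ell e_k b_\ell$ for finitely many $a_\ell, b_\ell \in R$. Multiplying through by $e_j$ on both sides and absorbing the middle $e_k$'s, we obtain a decomposition
\[e_j = \sum_\ell r_\ell s_\ell, \quad r_\ell = e_j a_\ell e_k \in R_{jk}, \quad s_\ell = e_k b_\ell e_j \in R_{kj}.\]
This is the only nontrivial input; everything else is formal.

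Next, surjectivity is immediate: for $a \in S_{ij}$, the relation $a = a e_j = \sum_\ell (a r_\ell) s_\ell$ together with $S_{ij} R_{jk} \subseteq S_{ik}$ (noted right before the statement) shows $a$ is in the image of the multiplication map. For injectivity, suppose $\sum_m b_m \otimes c_m$ lies in the kernel, i.e.\ $\sum_m b_m c_m = 0$ with $b_m \in S_{ik}$ and $c_m \in R_{kj}$. Using $c_m = c_m e_j$ and the decomposition above,
\[\sum_m b_m \otimes c_m = \sum_{m,\ell} b_m \otimes (c_m r_\ell)\, s_\ell = \sum_{m,\ell} b_m (c_m r_\ell) \otimes s_\ell = \sum_\ell \Bigl(\sum_m b_m c_m\Bigr) r_\ell \otimes s_\ell = 0,\]
where the crucial middle step moves the element $c_m r_\ell \in R_{kj} R_{jk} \subseteq R_{kk}$ across the tensor product, and the last step uses $\sum_m b_m c_m = 0$.

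There is no real obstacle here: the entire argument rests on producing the decomposition $e_j = \sum r_\ell s_\ell$ with $r_\ell \in R_{jk}$, $s_\ell \in R_{kj}$. Once that is in hand, the same element $e_j$ both splits any $a \in S_{ij}$ as a sum of products through $e_k$ and supplies the cancellation needed to collapse any element of the kernel. This is essentially a Morita-theoretic argument: the strong hyperbolic family condition forces the corners $R_{kk}$ and $R_{jj}$ to be Morita equivalent via the bimodules $R_{jk}$ and $R_{kj}$, and the tensor product identity is just the corresponding reconstruction formula for the $R_{ii}$-$R_{jj}$-bimodule $S_{ij}$.
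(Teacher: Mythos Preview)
Your argument is correct and is essentially the same as the paper's: both hinge on the decomposition \(e_j = \sum_\ell r_\ell s_\ell\) with \(r_\ell \in R_{jk}\), \(s_\ell \in R_{kj}\) coming from the strong hyperbolic family hypothesis. The paper phrases the result as exhibiting the explicit inverse \(a \mapsto \sum_\ell a r_\ell \otimes s_\ell\) and leaving the verification as a direct calculation, while you separate that calculation into surjectivity and injectivity, but the computations are identical.
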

\begin{proof}
Let \(e_j = \sum_m x_m y_m\) for some \(x_m \in R_{jk}\) and \(y_m \in R_{kj}\), they exists since \(e_j \in R e_k R\). Then a direct calculation show that
\[S_{ij} \to S_{ik} \otimes_{e_k R e_k} R_{kj}, a \mapsto \sum_m a x_m \otimes y_m\]
is the inverse to the map from the statement.
\end{proof}

\begin{lemma} \label{form-pres}
For any non-zero indices \(j\), \(k\) consider the group \(F\) with the generators \(u \boxtimes p\) for \(u \in \Theta^0_k\), \(p \in R_{kj}\), and \(\phi(a)\) for \(a \in S_{-j, j}\). The relations are
\begin{itemize}
\item \(\phi(a + b) = \phi(a) \dotplus \phi(b)\), \(\phi(a) = \phi(-\inv a)\);
\item \(u \boxtimes a \dotplus \phi(b) = \phi(b) \dotplus u \boxtimes a\), \([u \boxtimes a, v \boxtimes b]^\cdot = \phi(-\inv a \inv{\pi(u)} \pi(v) b)\);
\item \((u \dotplus v) \boxtimes a = u \boxtimes a \dotplus v \boxtimes a\), \(u \boxtimes (a + b) = u \boxtimes a \dotplus \phi(\inv{\,b\,} \rho(u) a) \dotplus u \boxtimes b\);
\item \(u \boxtimes ab = (u \cdot a) \boxtimes b\) for \(u \in \Theta^0_k\), \(a \in R_{kk}\), \(b \in R_{kj}\);
\item \(\phi(a) \boxtimes b = \phi(\inv{\,b\,} a b)\).
\end{itemize}
Then the homomorphism
\[f \colon F \to \Theta^0_j, u \boxtimes p \mapsto u \cdot p, \phi(a) \mapsto \phi(a)\]
is an isomorphism.
\end{lemma}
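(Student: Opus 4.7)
The plan is to check well-definedness, surjectivity, and injectivity of $f$ in turn; the last is the substantive step.

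For well-definedness, each of the five defining relation schemes of $F$ is a direct consequence of the odd form ring axioms for $\Theta$. The commutator relation follows from $[u,v] = \phi(-\inv{\pi(u)}\pi(v))$ together with $\pi(u \cdot a) = \pi(u)a$; the non-additivity of $\boxtimes$ in the second variable is precisely the axiom $u \cdot (a+b) = u \cdot a \dotplus \phi(\inv b \rho(u) a) \dotplus u \cdot b$; the $\phi$-identities reduce to $\phi(a + \inv a) = 0$ and $\phi(\inv a b a) = \phi(b) \cdot a$; associativity of the $R$-action yields $u \boxtimes ab = (u \cdot a) \boxtimes b$; and commutativity of $u \boxtimes a$ with $\phi(b)$ follows from $\pi(\phi(b)) = 0$. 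Surjectivity of $f$ is then immediate from the displayed equality $\Theta^0_k \cdot R_{kj} \dotplus \phi(S_{-j,j}) = \Theta^0_j$ recorded before the unrelativized Steinberg group definition.

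For injectivity, I would construct an explicit inverse $g \colon \Theta^0_j \to F$. Fix a presentation $e_j = \sum_{m=1}^n x_m y_m$ with $x_m \in R_{jk}$, $y_m \in R_{kj}$, which exists because $e_j \in R e_k R$. For $w \in \Theta^0_j$ note that $w \cdot x_m \in \Theta^0_k$ (the vanishing condition $e_n \pi(w \cdot x_m) = e_n \pi(w) x_m = 0$ is inherited from $w$), so one may set
\[
g(w) \;=\; (w \cdot x_1) \boxtimes y_1 \dotplus \cdots \dotplus (w \cdot x_n) \boxtimes y_n \dotplus \phi(c_w),
\]
where the correction term $c_w \in S_{-j,j}$ is the unique element forced by the non-additivity axiom so that $f(g(w)) = w$ holds in $\Theta^0_j$.

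The main obstacle is to show $g$ is independent of the choice of decomposition of $e_j$ and that $g$ is a group homomorphism. Two presentations of $e_j$ differ by an element in the kernel of the multiplication $\bigoplus R_{jk} \otimes_{\mathbb Z} R_{kj} \to R_{jj}$, and the induced discrepancy in the $\boxtimes$-terms of $g(w)$ lives in the image of $S_{-j,k} \otimes_{e_k R e_k} R_{kj}$; by lemma \ref{ring-pres} this image is identified with $S_{-j,j}$, so the discrepancy collapses into a $\phi$-term and is absorbed by a matching adjustment of $c_w$. The identity $g \circ f = \id_F$ is checked on generators: on $\phi(a)$ it is immediate from $\phi(a) \boxtimes b = \phi(\inv b a b)$ and $\sum \inv{y_m}\, a\, y_m$-type manipulations together with the symmetry $\phi(a) = \phi(-\inv a)$; on $u \boxtimes p$ one uses $(u \cdot p) \cdot x_m = u \cdot (p x_m)$ and the relation $u \boxtimes p x_m y_m = (u \cdot p x_m) \boxtimes y_m$ to collapse $g(f(u \boxtimes p))$ back to $u \boxtimes p$ modulo $\phi$-corrections that cancel by construction of $c_{u \cdot p}$. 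Throughout, the technical burden is to track the $\phi$-corrections produced by the non-additivity of the $R$-action; all such corrections land in $\phi(S_{-j,j})$ where they are precisely the ones encoded in the defining relations of $F$, and lemma \ref{ring-pres} is what makes this bookkeeping consistent.
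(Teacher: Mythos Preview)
Your approach matches the paper's: build an explicit section \(g\) of \(f\) from a fixed decomposition \(e_j=\sum_m x_m y_m\), check it is a homomorphism, and verify \(g\circ f=\id_F\) on generators. The paper simply writes the correction term down as \(c_w=\sum_{m<m'}\inv{y_{m'}}\,\inv{x_{m'}}\,\rho(w)\,x_m y_m\) and then computes \(g(f(\phi(a)))=\phi(a)\) and \(g(f(u\boxtimes a))=u\boxtimes a\) directly; it never argues independence of the chosen decomposition and never invokes lemma~\ref{ring-pres}, so that portion of your outline is an unnecessary detour (and the appeal to lemma~\ref{ring-pres} concerns elements of \(S_{-j,j}\), not discrepancies in \(F\), so it does not do the work you assign it).
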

\begin{proof}
Let \(e_j = \sum_m x_m y_m\) for some \(x_m \in R_{jk}\) and \(y_m \in R_{kj}\). Consider the map
\[g \colon \Theta^0_j \to F, u \mapsto \sum_m^\cdot (u \cdot x_m) \boxtimes y_m \dotplus \phi\bigl(\sum_{m < m'} \inv{y_{m'}} \inv{x_{m'}} \rho(u) x_m y_m\bigr),\]
it is a section of \(f\). The relations of \(F\) easily imply that \(g\) is a homomorphism. Finally,
\begin{align*}
g(f(\phi(a))) &= \sum_m^\cdot (\phi(a) \cdot x_m) \boxtimes y_m \dotplus \phi\bigl(\sum_{m < m'} \inv{y_{m'}} \inv{x_{m'}} (a - \inv a) x_m y_m\bigr) = \phi(a); \\
g(f(u \boxtimes a)) &= \sum_m^\cdot (u \cdot a x_m) \boxtimes y_m \dotplus \phi\bigl(\sum_{m < m'} \inv{y_{m'}} \inv{x_{m'}} \inv a \rho(u) a x_m y_m\bigr) = u \boxtimes a. \qedhere
\end{align*}
\end{proof}

\begin{prop} \label{elim-sur}
The homomorphism
\[F_\alpha \colon \overline{\stunit}(R, \Delta; S, \Theta; \Phi / \alpha) \to \overline{\stunit}(R, \Delta; S, \Theta; \Phi)\]
is surjective if \(\alpha\) is a short root and \(\ell \geq 3\) or if \(\alpha\) is an ultrashort root and \(\ell \geq 2\). The homomorphism
\[F_{\Psi / \alpha} \colon \overline{\stunit}(R, \Delta; S, \Theta; \Phi / \Psi) \to \overline{\stunit}(R, \Delta; S, \Theta; \Phi / \alpha)\]
is also surjective if \(\alpha \in \Psi \subseteq \Phi\) is a root subsystem of type \(\mathsf A_2\) and \(\ell \geq 3\).
\end{prop}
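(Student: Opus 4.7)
\emph{Strategy.} The plan is to show that each generator of \(\overline{\stunit}(R, \Delta; S, \Theta; \Phi)\) — namely the distinguished elements \(Z_{ij}(a, p)\) and \(Z_j(u, s)\) from Proposition~\ref{identities} — lies in the image of \(F_\alpha\) (respectively \(F_{\Psi/\alpha}\)). For \(\alpha = \mathrm e_m - \mathrm e_l\) short with \(\ell \geq 3\), every generator whose index set does not contain both \(l\) and \(m\) (resp.\ both \(-l\) and \(-m\)) is already in the image via the identities (Simp) and (Sym). Specifically: any \(Z_{ij}(a, p)\) with \(\{i, j\} \cap \{\pm l, \pm m\} = \varnothing\) is already a distinguished element of the smaller group; \(Z_{lj}(a, p)\) (with \(j \notin \{0, \pm l, \pm m\}\)) is identified with \(Z_{l \oplus m, j}((a, 0), (p, 0))\) by (Simp.1); the ``opposite'' generator \(Z_{l, -m}(a, p)\) is identified with \(Z_{-(l \oplus m)}(\phi(a), \phi(p))\) by (Simp.2); and \(Z_l(u, s)\) arises from \(Z_{l \oplus m}\) via the natural embeddings \(\Theta^0_l \hookrightarrow \Delta^0_{l \oplus m}\) and \(\Delta^0_{-l} \hookrightarrow \Delta^0_{-(l \oplus m)}\). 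For ultrashort \(\alpha = \mathrm e_m\), the analogous analysis uses the \(\Theta^{\ominus m}\)-generators to dispose of \(Z_j(u, s)\) with \(j \neq \pm m\).

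\emph{The core case.} The only nontrivial generator is \(Z_{lm}(a, p)\) (together with \(Z_{-l, -m}\), equivalent by (Sym)). Pick an auxiliary index \(k \notin \{0, \pm l, \pm m\}\), available because \(\ell \geq 3\). Lemma~\ref{ring-pres} decomposes \(a = \sum_\nu x_\nu y_\nu\) with \(x_\nu \in S_{lk}\) and \(y_\nu \in R_{km}\), and applied to the ring \(R\) decomposes \(p = \sum_\mu r_\mu s_\mu\) with \(r_\mu \in R_{mk}\), \(s_\mu \in R_{kl}\). The roots \(\mathrm e_k - \mathrm e_m\), \(\mathrm e_l - \mathrm e_k\), and \(\mathrm e_l - \mathrm e_m\) span a type \(\mathsf A_2\) subsystem; enlarging the special subset \(\Sigma\) in the conjugacy calculus to its positive system by (Sub), I would iteratively apply (Chev) and (Hom) to rewrite
\[Z_{lm}(a, p) = \up{X_{ml}(p)}{\{X_{lm}(a)\}_\Sigma}\]
as a product of distinguished elements whose index sets involve \(k\) together with at most one of \(l, m\), thereby avoiding the eliminated root \(\mathrm e_m - \mathrm e_l\). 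Each such factor lies in the image of \(F_\alpha\) by the first paragraph, so \(Z_{lm}(a, p)\) does too. The ultrashort case \(\alpha = \mathrm e_m\) (with \(\ell \geq 2\)) runs in parallel, with Lemma~\ref{form-pres} in place of Lemma~\ref{ring-pres}: every \(u \in \Theta^0_m\) is a sum of expressions \(v \cdot q\) (with \(v \in \Theta^0_k\), \(q \in R_{km}\)) and \(\phi\)-terms, and the Chevalley commutator \([X_k(v), X_{km}(q)]\) produces \(X_m(-v \cdot q)\) together with an \(X_{-k, m}\)-contribution already in the image.

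\emph{The \(\mathsf A_2\)-subsystem case, and main obstacle.} For the last statement, \(\Psi = \{\pm \alpha, \pm \beta, \pm(\alpha + \beta)\}\) of type \(\mathsf A_2\) has quotient \(\Psi/\alpha\) consisting of a single short root \([\beta]\) in \(\Phi/\alpha\). Surjectivity of \(F_{\Psi/\alpha}\) reduces to the core case performed inside \(\overline{\stunit}(\Phi/\alpha)\): the third index of \(\Psi\) (not merged with the pair absorbed by \(\alpha\)) plays the role of the auxiliary \(k\), so \(\ell \geq 3\) suffices without requiring any further free index. The main technical obstacle throughout is the handling of the \(p\)-argument in the core case: expressing \(X_{lm}(a)\) itself as a commutator of generators outside the eliminated direction is a straightforward application of Lemma~\ref{ring-pres} plus one Chevalley bracket, but rewriting \(\up{X_{ml}(p)}{\{X_{lm}(a)\}_\Sigma}\) requires factoring the conjugator \(X_{ml}(p)\) through commutators of \(X_{mk}\) and \(X_{kl}\) and carefully peeling them off via (Chev), verifying at each step that the resulting expression is a legitimate distinguished element with a preimage in \(\overline{\stunit}(R, \Delta; S, \Theta; \Phi/\alpha)\) under the (Simp) identifications.
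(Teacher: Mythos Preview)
Your overall strategy matches the paper's: show that every generator \(Z_{ij}(a,p)\), \(Z_j(u,s)\) lies in the image, with the non-obvious cases being the generators whose root lies in \(\mathbb R\alpha\). Lemmas~\ref{ring-pres} and~\ref{form-pres} are indeed the right tools. But there are two concrete problems.

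\textbf{The ``main obstacle'' is a phantom.} You propose to factor the conjugator \(X_{ml}(p)\) through commutators \([X_{mk}(r_\mu),X_{kl}(s_\mu)]\) and peel these off via (Chev). This is unnecessary and, if carried out literally, runs into antiparallel roots (e.g.\ the factor \(X_{kl}(s)\) has root \(\mathrm e_l-\mathrm e_k\), antiparallel to the root of the inner factor \(X_{lk}(x)\)). The correct move, which the paper executes, is to leave \(p\) alone and decompose only the \emph{conjugatee}. Writing \(X_{lm}(a)=\prod_\nu[X_{lk}(x_\nu),X_{km}(y_\nu)]\) via Lemma~\ref{ring-pres} and applying (Hom), each resulting factor is of the form \(\up{X_{ml}(p)}{\{X_\gamma(\cdot)\}}\) with \(\gamma\in\{\mathrm e_k-\mathrm e_l,\mathrm e_m-\mathrm e_k\}\); since neither is antiparallel to \(\mathrm e_l-\mathrm e_m\), a \emph{single} application of (Chev) turns it into a product of root elements with roots in \(\pi_\alpha^{-1}(\pm(\mathrm e_k-\mathrm e_{l\oplus m}))\), which is manifestly in the image of \(F_\alpha\). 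The paper's displayed identity
\[
Z_{k,i\oplus j}\bigl(\up{T_{ji}(p)}a,\up{T_{ji}(p)}q\bigr)=\up{X_{ji}(p)\,X_{ik}(q)}{\{X_{kj}(a)\}}=Z_{ij}(qa,p)\,X_{k,i\oplus j}\bigl(\up{T_{ji}(p)}a\bigr)
\]
is precisely this computation packaged in one line: it expresses \(Z_{ij}(qa,p)\), with \(p\) arbitrary, as a quotient of two elements already in the image. The ultrashort case works the same way via Lemma~\ref{form-pres} and the two analogous identities the paper records.

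\textbf{The \(\mathsf A_2\) reduction does not work as you state it.} You claim the ``third index of \(\Psi\)'' serves as the auxiliary \(k\). But if \(\Psi\) involves indices \(l,m,n\) with \(\alpha=\mathrm e_m-\mathrm e_l\), then in \(\Phi/\alpha\) the indices \(l,m\) merge to \(l\oplus m\), and the problematic root \([\beta]=\mathrm e_n-\mathrm e_{l\oplus m}\) already uses \emph{both} remaining indices \(l\oplus m\) and \(n\). There is no spare index to serve as auxiliary when \(\ell=3\), so your reduction to ``the core case inside \(\overline\stunit(\Phi/\alpha)\)'' only works for \(\ell\geq 4\). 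The paper's identity, read in \(\overline\stunit(\Phi/\alpha)\), likewise needs a third index; the \(\ell=3\) case of the \(\mathsf A_2\) statement therefore requires a separate argument (or is only used downstream under the stronger hypothesis \(\ell\geq 4\), cf.\ the hypothesis of Theorem~\ref{root-elim}).
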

\begin{proof}
By proposition \ref{identities}, \(\overline{\stunit}(R, \Delta; S, \Theta; \Phi)\) is generated by \(Z_{ij}(a, p)\) and \(Z_j(u, s)\). It suffices to show that they lie in the images of the homomorphisms. This is clear for the generators with the roots not in \(\{\alpha, -\alpha\}\) or \(\Psi\) respectively. For the remaining roots \(\beta\) it suffices to show that \(X_\beta(S, \Theta)\) lie in the images. This easily follows from lemmas \ref{ring-pres}, \ref{form-pres} and the identities
\begin{align*}
Z_{k, i \oplus j}\bigl(\up{T_{ji}(p)} a, \up{T_{ji}(p)} q\bigr) &= \up{X_{ji}(p)\, X_{ik}(q)}{\{X_{kj}(a)\}} = Z_{ij}(qa, p)\, X_{k, i \oplus j}\bigl(\up{T_{ji}(p)}a\bigr); \\
Z^{\ominus i}_j\bigl(\up{T_{-i}(s)}{(q_{-i} \cdot (-\inv a))}, \up{T_{-i}(s)}{(q_{-i} \cdot p)}\bigr) &= \up{X_{-i}(s)\, X_{-i, -j}(p)}{\{X_{-j, i}(a)\}} = Z_i(\varphi(pa), s)\, X^{\ominus i}_j\bigl(\up{T_{-i}(s)}{(q_{-i} \cdot (-\inv a))}\bigr); \\
Z^{\ominus i}_j\bigl(\up{T_{-i}(s)} u, \up{T_{-i}(s)}{(q_{-i} \cdot p)}\bigr) &= \up{X_{-i}(s)\, X_{-i, -j}(p)}{\{X_j(u)\}} = Z_i(u \cdot p, s)\, X_j\bigl(\up{T_{-i}(s)}{(u \dotminus q_{-i} \inv p \inv{\rho(u)})}\bigr). \qedhere
\end{align*}
\end{proof}

Of course, the proposition also implies that
\[F_\alpha \colon \overline{\stunit}(R, \Delta; S, \Theta; \Phi / \Psi) \to \overline{\stunit}(R, \Delta; S, \Theta; \Phi / \alpha)\]
is surjective if \(\ell \geq 3\) and \(\Psi\) is of type \(\mathsf{BC}_2\).

The final technical lemma is needed in the next section.

\begin{lemma}\label{associator}
Suppose that \(\ell = 3\). Let \(A\) be an abelian group and
\[\{-\}_{ij} \colon R_{1i} \otimes_{\mathbb Z} R_{ij} \otimes_{\mathbb Z} S_{j3} \to A\]
be homomorphisms for \(i, j \in \{-2, 2\}\). Suppose also that
\begin{itemize}
\item[(A1)] \(\{p \otimes qr \otimes a\}_{ik} = \{pq \otimes r \otimes a\}_{jk} + \{p \otimes q \otimes ra\}_{ij}\);
\item[(A2)] \(\{p \otimes q \otimes \inv pa\}_{-i, i} = 0\);
\item[(A3)] \(\{p \otimes qr \otimes a\}_{ij} = \{\inv q \otimes \inv pr \otimes a\}_{-i, j}\);
\item[(A4)] \(\{p \otimes q \otimes ra\}_{ij} = -\{\inv r \otimes \inv q \otimes \inv pa\}_{-j, -i}\).
\end{itemize}
Then \(\{x\}_{ij} = 0\) for all \(i\), \(j\), \(x\).
\end{lemma}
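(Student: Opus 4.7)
The plan is to reduce all four cases $(i,j) \in \{-2,2\}^2$ to the ``opposite-pair'' cases $j = -i$ using (A1), and then kill those cases by combining (A2), (A3), (A4) with decompositions via the intermediate index $-1$ (available because the hyperbolic family is strong and $\ell = 3$).

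First I would handle the diagonal cases $(i,i)$ and $(-i,-i)$. Any $P \in R_{1,i}$ can be written as $\sum_l p_l q_l$ with $p_l \in R_{1,-i}$, $q_l \in R_{-i,i}$, because $e_i \in R e_{-i} R$ by the strong orthogonal hyperbolic assumption. Applying (A1) then gives
\[\{P \otimes R \otimes A\}_{i,i} = \sum_l \bigl[\{p_l \otimes q_l R \otimes A\}_{-i,i} - \{p_l \otimes q_l \otimes R A\}_{-i,i}\bigr],\]
expressing every value of $\{-\}_{i,i}$ as a $\mathbb Z$-combination of values of $\{-\}_{-i,i}$; the $(-i,-i)$ case is analogous via $\{-\}_{i,-i}$. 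Hence it suffices to prove $\{-\}_{-i,i} = 0$ and $\{-\}_{i,-i} = 0$.

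For $\{-\}_{-i,i}$ I would first polarize (A2) to the bilinear antisymmetry
\[\{p_1 \otimes q \otimes \inv{p_2}\, a\}_{-i,i} = -\{p_2 \otimes q \otimes \inv{p_1}\, a\}_{-i,i},\]
and combine this with (A4) applied to the same expression to derive the symmetry $\{p \otimes q \otimes a\}_{-i,i} = \{p \otimes \inv q \otimes a\}_{-i,i}$; here every $a \in S_{i,3}$ is decomposed as $\sum_k \inv{p_k}\, a_k'$ with $a_k' \in S_{-1,3}$, again using $e_i \in R e_{-1} R$. The key step is then to decompose $q = \sum_n q_1^n q_2^n$ through index $-1$, apply (A3) to transfer $\{p \otimes q \otimes a\}_{-i,i}$ into a sum $\sum_n \{\inv{q_1^n} \otimes \inv p\, q_2^n \otimes a\}_{i,i}$, and then use the diagonal reduction to cycle back into $\{-\}_{-i,i}$. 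After bookkeeping, the two arising pieces can be reshuffled using the polarized (A2) (in the form that matches the third slot to $\inv{p}\,(q_2^n a)$) and the $q \leftrightarrow \inv q$ symmetry so that $\{p \otimes q \otimes a\}_{-i,i}$ is exhibited as a sum of triples each of the form $\{p \otimes q' \otimes \inv p\, a''\}_{-i,i}$, hence annihilated by (A2). The case $\{-\}_{i,-i}$ follows by the completely symmetric argument with the roles of (A3) and (A4) exchanged.

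The main obstacle I expect is the combinatorial bookkeeping in the last step: since (A1), (A3), and (A4) are all reversible symmetries of the relevant groups of triples, only (A2) can actually kill elements, so the whole problem is to find a decomposition of a completely arbitrary $\{p \otimes q \otimes a\}_{-i,i}$ into (A2)-killable pieces without leaving uncancellable remainders. The fact that the same variable $p$ reappears after (A3) inside the middle slot as $\inv p\, q_2^n$, and that a parallel $\inv p$ can be made to appear in the third slot by the decomposition $a = \sum_k \inv{p_k} a_k'$, is what will make the cancellation close up.
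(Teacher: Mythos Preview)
Your reduction of the diagonal cases $(i,i)$ to the opposite-pair cases via (A1) is correct, and so is your derivation of the $q \leftrightarrow \inv q$ symmetry from polarized (A2) and (A4). The genuine gap is the final ``bookkeeping'' step, which you yourself flag as the main obstacle but do not actually carry out. Your stated reason for expecting it to close up --- that ``the same variable $p$ reappears after (A3) inside the middle slot'' --- does not help: after you apply (A3) to get $\{\inv{q_1^n} \otimes \inv p\, q_2^n \otimes a\}_{i,i}$ and then re-express this in $\{-\}_{-i,i}$ via (A1), the first slot carries a new variable coming from the factorization of $\inv{q_1^n}$, not the original $p$, so the resulting term is \emph{not} of the (A2)-killable shape $\{p \otimes q' \otimes \inv p\, a''\}_{-i,i}$. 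Introducing the further decomposition $a = \sum_k \inv{p_k}\, a_k'$ only adds more unrelated variables. You are right that (A1), (A3), (A4) are invertible symmetries and only (A2) can kill anything; but you have not exhibited a finite sequence of moves bringing an arbitrary triple into (A2)-range, and the cycling you describe has no visible termination.

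The paper's argument supplies the missing idea and is genuinely different. From (A3) and (A4) it first extracts that $\{-\}_{ij}$ is unchanged when the middle entry $qxyr$ is replaced by $qyxr$ for $x, y \in R_{\pm 1, \pm 1}$; this lets one pass to the quotient and assume $R_{11}$ is \emph{commutative}, after which scalars in $R_{11}$ move freely among all three slots. One then reduces to $\{-\}_{22}$, derives the ``quadratic'' vanishings $\{px \otimes qx \otimes a\}_{22} = 0$ and $\{p \otimes yq \otimes ya\}_{22} = 0$, and combines them with their linearizations in a short computation to obtain $\{x \otimes y(pq)^3 z \otimes wa\}_{22} = 0$. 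The punchline is that the elements $(pq)^3$ generate the unit ideal of $R_{11}$ (this is where strongness of the hyperbolic family enters), so the middle slot absorbs a unit and everything vanishes. The cube is essential: the quadratic relations only produce enough cancellation at that power, and nothing of this kind emerges from the direct variable-shuffling you propose.
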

\begin{proof}
Let \(R_{|2|, |2|} = \sMat{R_{-2, -2}}{R_{-2, 2}}{R_{2, -2}}{R_{22}}\) and \(S_{|2|, 3} = \sCol{S_{-2, 3}}{S_{23}}\). For convenience we prove the claim for arbitrary left \(R_{|2|, |2|}\)-module \(S_{|2|, 3}\) instead of a part of a crossed module, where \(S_{\pm 1, 3} = R_{\pm 1, 2} \otimes_{R_{22}} S_{23}\) in (A2) and (A4). From the last two identities we get
\begin{align*}
\{px \otimes \inv yq \otimes a\}_{ij} &= \{py \otimes \inv xq \otimes a\}_{-i, j}; \tag{A5} \\
\{p \otimes qx \otimes \inv ya\}_{ji} &= \{p \otimes qy \otimes \inv xa\}_{j, -i} \tag{A6}
\end{align*}
for \(x \in R_{1i}\) and \(y \in R_{1, -i}\). This implies that
\[\{p \otimes qxyr \otimes a\}_{ij} = \{p \otimes qyxr \otimes a\}_{ij} \tag{A7}\]
for \(x, y \in R_{\pm 1, \pm 1}\). Let \((I, \Gamma) \leqt (R, \Delta)\) be the odd form ideal generated by \(xy - yx\) for \(x, y \in R_{11}\). From (A5)--(A7) we obtain that \(\{-\}_{ij}\) factor through \(R / I\) and \(S_{|2|, 3} / (I \cap R_{|2|, |2|}) S_{|2|, 3}\), so we may assume that \(R_{11}\) is commutative. Using (A4)--(A6) it is easy to see that
\[\{rx \otimes y e_1 z \otimes w e_1 a\}_{ij} = \{x \otimes yrz \otimes w e_1 a\}_{ij} = \{x \otimes y e_1 z \otimes wra\}_{ij} \tag{A8}\]
for \(r \in R_{11}\). By (A5) and (A6), it suffices to prove that \(\{x\}_{22} = 0\).

From (A2), (A4), (A5), and (A6) we get
\begin{align*}
\{px \otimes qx \otimes a\}_{22} &= 0; \tag{A9} \\
\{p \otimes yq \otimes ya\}_{22} &= 0 \tag{A10}
\end{align*}
for \(x \in R_{12}\) and \(y \in R_{21}\). Using (A1), (A8), (A9), (A10), and the linearizations of (A9) and (A10), we get
\begin{align*}
\{x \otimes y (pq)^3 z \otimes wa\}_{22} &= \{xyp \otimes q (pq)^2 z \otimes wa\}_{22} + \{(pq)^2 x \otimes yp \otimes qzwa\}_{22} \\
&= \{xyp \otimes qz' \otimes wpqa\}_{22} + \{x' \otimes ypqp \otimes qzwa\}_{22} \\
&= -\{xyz' \otimes qp \otimes w'a\}_{22} - \{x' \otimes qp \otimes y'zwa\}_{22} = 0
\end{align*}
for \(x, p, z \in R_{12}\), \(y, q, w \in R_{21}\), \(a \in S_{13}\), where \(x' = pqx - xqp\), \(y' = ypq - qpy\), \(z' = pqz - zqp\), \(w' = wpq - qpw\). The last equality follows from \(x'q = py' = z'q = pw' = 0\). It remains to notice that the elements \((pq)^3\) generate the unit ideal in \(R_{11}\).
\end{proof}

\section{Construction of root subgroups}

In this and the next sections \(\alpha = \mathrm e_m\) or \(\alpha = \mathrm e_m - \mathrm e_l\) for \(m \neq \pm l\) is a fixed root. We also assume that \(\ell \geq 3\). We are going to prove that
\[F_\alpha \colon \overline{\stunit}(R, \Delta; S, \Theta; \Phi / \alpha) \to \overline{\stunit}(R, \Delta; S, \Theta; \Phi)\]
is an isomorphism, i.e. that \(\overline{\stunit}(R, \Delta; S, \Theta; \Phi / \alpha)\) has a natural conjugacy calculus with respect to \(H\).

In this section we construct root elements \(\widetilde X_\beta(\mu) \in \overline{\stunit}(R, \Delta; S, \Theta; \Phi / \alpha)\) for \(\beta \in \Phi\) and prove the Steinberg relations for them. Let \(I = \{m, -m\}\) if \(\alpha = \mathrm e_m\) and \(I = \{m, -m, l, -l\}\) if \(\alpha = \mathrm e_m - \mathrm e_l\). If \(\beta \notin \mathbb R \alpha\), then there is a canonical choice for such elements, i.e.
\begin{align*}
\widetilde X_{ij}(a) &= X_{ij}(a) \text{ for } i, j \notin I; \\
\widetilde X_{ij}(a) &= \widetilde X_{-j, -i}(-\inv a) = X_{i, \pm (l \oplus m)}(a) \text{ for } \alpha = \mathrm e_m - \mathrm e_l, i \notin I, j \in \{\pm l, \pm m\}; \\
\widetilde X_{ij}(a) &= \widetilde X_{-j, -i}(-\inv a) = X_{\pm (l \oplus m)}(\phi(a)) \text{ for } \alpha = \mathrm e_m - \mathrm e_l, i = \mp l, j = \pm m; \\
\widetilde X_{ij}(a) &= \widetilde X_{-j, -i}(-\inv a) = X_j^{\ominus m}(q_i \cdot a) \text{ for } \alpha = \mathrm e_m, i = \pm m; \\
\widetilde X_j(u) &= X_j(u) \text{ for } \alpha = \mathrm e_m - \mathrm e_l, j \notin I; \\
\widetilde X_j(u) &= X_j^{\ominus m}(u) \text{ for } \alpha = \mathrm e_m, j \notin I; \\
\widetilde X_j(u) &= X_{\pm (l \oplus m)}(u) \text{ for } \alpha = \mathrm e_m - \mathrm e_l, j \in \{\pm l, \pm m\}.
\end{align*}
These elements satisfy all the Steinberg relations involving only roots from saturated special subsets \(\Sigma \subseteq \Phi\) disjoint with \(\mathbb R \alpha\). Similar elements also may be defined in \(\stunit(R, \Delta; \Phi / \alpha)\) and \(\stunit(S, \Theta; \Phi / \alpha)\). The conjugacy calculus with respect to \(H / \alpha\) gives a way to evaluate the elements \(\up{X_\beta(\mu)}{\{\widetilde X_\gamma(\nu)\}}\) in terms of \(\widetilde X_{i \beta + j \gamma}(\lambda)\) if \(\pm \alpha \notin \langle \beta, \gamma \rangle\) and \(\langle \beta, \gamma \rangle\) is special. Up to symmetry, it remains to construct the element \(\widetilde X_{lm}(a)\) for \(\alpha = \mathrm e_m - \mathrm e_l\) or \(\widetilde X_m(u)\) for \(\alpha = \mathrm e_m\) in \(\overline{\stunit}(R, \Delta; S, \Theta; \Phi / \alpha)\), as well as to prove the Steinberg relations involving \(\alpha\) and \(2\alpha\).

Consider the expressions \(\up{\widetilde X_\beta(\mu)}{\{\widetilde X_\gamma(\nu)\}}\), where \(\alpha\) is strictly inside the angle \(\mathbb R_+ \beta + \mathbb R_+ \gamma\). We expand them in terms of \(\widetilde X_{i \beta + j \gamma}(\lambda)\) adding new terms as follows. If \(\alpha = \mathrm e_m - \mathrm e_l\) let
\begin{align*}
\up{X_{li}(p)}{\{\widetilde X_{im}(a)\}} &= \widetilde X_{lm}^i(p, a)\, \widetilde X_{im}(a); \\
\up{X_{im}(p)}{\{\widetilde X_{li}(a)\}} &= \up i {\widetilde X}_{lm}(-a, p)\, \widetilde X_{li}(a); \\
\up{X_{-l}(s)}{\{\widetilde X_m(u)\}} &= \widetilde X_{lm}^\pi(s, \dotminus u)\, \widetilde X_m(u); \\
\up{X_m(s)}{\{\widetilde X_{-l}(u)\}} &= \up \pi{\widetilde X}_{lm}(u, s)\, \widetilde X_{-l}(u); \\
\up{X_{-l}(s)}{\{\widetilde X_{-l, m}(a)\}} &= \widetilde X^{-l}_{lm}(s, a)\, \widetilde X_m(\dotminus s \cdot (-a))\, \widetilde X_{-l, m}(a); \\
\up{X_m(s)}{\{\widetilde X_{l, -m}(a)\}} &= \up{-m}{\widetilde X}_{lm}(a, \dotminus s)\, \widetilde X_{-l}(\dotminus s \cdot \inv a)\, \widetilde X_{l, -m}(a); \\
\up{X_{l, -m}(p)}{\{\widetilde X_m(u)\}} &= \widetilde X_{-l}(u \cdot \inv p)\, \widetilde X^{-m}_{lm}(-p, \dotminus u)\, \widetilde X_m(u); \\
\up{X_{-l, m}(p)}{\{\widetilde X_{-l}(u)\}} &= \widetilde X_m(u \cdot (-p))\, \up{-l}{\widetilde X}_{lm}(u, -p)\, \widetilde X_{-l}(u); \\
\widetilde X_\alpha(S, \Theta) &= \bigl\langle \widetilde X^i_{lm}(p, a), \up i{\widetilde X}_{lm}(a, p), \widetilde X^\pi_{lm}(s, u), \up \pi {\widetilde X}_{lm}(u, s), \\
&\quad \widetilde X^{-l}_{lm}(s, a), \up{-m}{\widetilde X}_{lm}(a, s), \widetilde X^{-m}_{lm}(p, u), \up{-l}{\widetilde X}_{lm}(u, p) \bigr\rangle.
\end{align*}
In the case \(\alpha = \mathrm e_m\) let
\begin{align*}
\up{X_{-m, i}(p)}{\{\widetilde X_{im}(a)\}} &= \widetilde X_{-m, m}^i(p, a)\, \widetilde X_{im}(a); \\
\up{X_i(s)}{\{\widetilde X_{im}(a)\}} &= \widetilde X_{-i, m}(\rho(s) a)\, \widetilde X^i_m(\dotminus s, -a)\, \widetilde X_{im}(a); \\
\up{X_{im}(p)}{\{\widetilde X_i(u)\}} &= \up i {\widetilde X}_m(u, -p)\, \widetilde X_{-i, m}(-\rho(u) p)\, \widetilde X_i(u); \\
\widetilde X_\alpha(S, \Theta) &= \bigl\langle \widetilde X^i_{-m, m}(p, a), \widetilde X^i_m(s, a), \up i{\widetilde X}_m(u, p) \bigr\rangle.
\end{align*}

\begin{lemma} \label{elim-diag}
If \(g \in \widetilde X_\alpha(S, \Theta)\) and \(\beta \in \Phi / \alpha\), then
\[\up g{Z_\beta(\mu, \nu)} = Z_\beta\bigl(\up{\delta(\stmap(g))} \mu, \up{\delta(\stmap(g))} \nu\bigr).\]
\end{lemma}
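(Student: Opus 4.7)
The plan is to prove the identity by reducing to the case where \(g\) is one of the finitely many distinguished generators of \(\widetilde X_\alpha(S, \Theta)\), and then verifying each case by direct computation in the conjugacy calculus of \(\overline{\stunit}(R, \Delta; S, \Theta; \Phi / \alpha)\) with respect to \(H / \alpha\).

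First I will establish that both sides of the asserted identity are multiplicative in \(g\). If the identity holds for \(g_1\) and \(g_2\), then
\begin{align*}
\up{g_1 g_2}{Z_\beta(\mu, \nu)} &= \up{g_1}{\bigl(\up{g_2}{Z_\beta(\mu, \nu)}\bigr)} = \up{g_1}{Z_\beta(\up{\delta(\stmap(g_2))}\mu, \up{\delta(\stmap(g_2))}\nu)} \\
&= Z_\beta(\up{\delta(\stmap(g_1 g_2))}\mu, \up{\delta(\stmap(g_1 g_2))}\nu),
\end{align*}
using the hypothesis for \(g_2\) at the second step and for \(g_1\) with transformed arguments at the third; an analogous argument handles inverses. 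Hence the identity for all \(g \in \widetilde X_\alpha(S, \Theta)\) follows from its validity on the distinguished generators listed in the definition of \(\widetilde X_\alpha(S, \Theta)\).

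For each such generator \(g\), the next step is to identify \(\stmap(g) \in \unit(S, \Theta)\) explicitly. Applying \(\stmap\) to the defining equation of \(g\) and using the Chevalley commutator formulas in \(\unit(S \rtimes R, \Theta \rtimes \Delta)\) yields an explicit transvection: for instance, from \(\up{X_{li}(p)}{\{\widetilde X_{im}(a)\}} = \widetilde X^i_{lm}(p, a) \cdot \widetilde X_{im}(a)\) together with \([T_{li}(p), T_{im}(a)] = T_{lm}(pa)\) one obtains \(\stmap(\widetilde X^i_{lm}(p, a)) = T_\alpha(pa)\). Analogous computations for the remaining generators give \(\stmap\) in the form \(T_\alpha(\xi)\) or \(T_{2\alpha}(\zeta)\) for explicit \(\xi\) or \(\zeta\), so that \(\delta(\stmap(g))\) acts on \(R\) and \(\Delta\) by the corresponding honest elementary transformation.

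Finally, for each generator \(g\) and each root \(\beta \in \Phi / \alpha\), I compute \(\up g{Z_\beta(\mu, \nu)}\) by replacing \(g\) with its defining expression and expanding using the conjugacy calculus of \(\overline{\stunit}(R, \Delta; S, \Theta; \Phi / \alpha)\) with respect to \(H / \alpha\). Successive applications of the axioms (Chev), (Conj), (XMod), and (Delta) of proposition \ref{identities}, combined with the Steinberg-type identities on the distinguished elements, collapse the expression to the canonical form \(Z_\beta(\up{\delta(\stmap(g))}\mu, \up{\delta(\stmap(g))}\nu)\). The main obstacle is the bookkeeping: each combination of generator type and root \(\beta\) requires a separate though conceptually uniform calculation, and the delicate subcases arise when \(\beta\) lies in a common rank-\(2\) saturated subsystem with the roots used to define \(g\); there several Chevalley commutator terms contribute, and one must verify that they cancel correctly using (HW) and the instance of (Conj) with \(\Psi = \mathbb R \alpha \cap \Phi\).
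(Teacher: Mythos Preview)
Your reduction to generators and the identification of \(\stmap(g)\) are correct and match the paper. The divergence is in how you handle the ``delicate subcase''. Each generator of \(\widetilde X_\alpha(S, \Theta)\) is built from roots in a saturated rank-\(2\) subsystem \(\Psi \subseteq \Phi\) containing \(\alpha\); in \(\Phi/\alpha\) these project to a single pair \(\Psi/\alpha\). When \(\beta \notin \Psi/\alpha\) the axiom (Conj) applied with the subsystem \(\Psi/\alpha \subseteq \Phi/\alpha\) gives the result immediately, exactly as you suggest. The problem is the remaining case \(\beta \in \Psi/\alpha\).

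Here your plan of a direct computation does not go through. In the conjugacy calculus on \(\overline{\stunit}(R,\Delta;S,\Theta;\Phi/\alpha)\) the only axioms that produce honest group conjugations are (XMod), (Conj), (Delta), and the (Comm) identities, and none of them lets you compute \(\up{Z_\gamma(a,p)}{Z_\beta(\mu,\nu)}\) when \(\gamma\) and \(\beta\) are parallel or antiparallel with general parameters; (Conj) in particular requires the inner term to lie in the image of \(F_{\Psi'}\) for a subsystem \(\Psi'\) containing \(\gamma\), so it cannot be applied to \(Z_\beta\) with \(\beta \in \Psi'\). Your appeal to ``(Conj) with \(\Psi = \mathbb R\alpha \cap \Phi\)'' is also off: in the \(\Phi/\alpha\) calculus the subsystems must lie in \(\Phi/\alpha\), and \(\alpha\) is no longer a root there. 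The paper resolves this by invoking Proposition~\ref{elim-sur}: since \(\ell \geq 3\), the map \(F_\beta \colon \overline{\stunit}(R,\Delta;S,\Theta;\Phi/\Psi) \to \overline{\stunit}(R,\Delta;S,\Theta;\Phi/\alpha)\) is surjective, so \(Z_\beta(\mu,\nu)\) can be rewritten as a product of generators with roots outside \(\Psi/\alpha\), reducing to the easy case. That surjectivity statement is the missing ingredient in your argument.
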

\begin{proof}
Without loss of generality, \(g\) is a generator of \(\widetilde X_\alpha(S, \Theta)\). Let \(\Psi \subseteq \Phi\) be the saturated irreducible root subsystem of rank \(2\) involved in the definition of \(g\) (of type \(\mathsf A_2\) or \(\mathsf{BC}_2\)). If \(\beta \notin \Psi / \alpha\), then the claim follows from (Conj). Otherwise we apply proposition \ref{elim-sur} to
\[F_\beta \colon \overline{\stunit}(R, \Delta; S, \Theta; \Phi / \Psi) \to \overline{\stunit}(R, \Delta; S, \Theta; \Phi / \alpha)\]
in order to decompose \(Z_\beta(\mu, \nu)\) into the generators with roots not in \(\Psi / \alpha\).
\end{proof}

Let \(\mathrm{eval} \colon \widetilde X_\alpha(S, \Theta) \to S_{lm}\) or \(\mathrm{eval} \colon \widetilde X_\alpha(S, \Theta) \to \Theta^0_m\) be such that \(\stmap(g) = T_{lm}(\mathrm{eval}(g))\) or \(\stmap(g) = T_m(\mathrm{eval}(g))\) for \(g \in \widetilde X_\alpha(S, \Theta)\) depending on the choice of \(\alpha\). Lemma \ref{elim-diag} implies that
\[[\widetilde X_\beta(\mu), g] = \prod_{\substack{i \beta + j \alpha \in \Phi\\ i, j > 0}} X_{i \beta + j \alpha}(f_{\beta \alpha i j}(\mu, \mathrm{eval}(g)))\]
for all \(g \in \widetilde X_\alpha(S, \Theta)\) if \(\beta\) and \(\alpha\) are linearly independent.

Note that any generator of \(\widetilde X_\alpha(S, \Theta)\) is trivial if either of its arguments vanishes.

\begin{lemma} \label{ush-new-root}
Suppose that \(\alpha = \mathrm e_m\). Then there is unique homomorphism
\[\widetilde X_m \colon \Theta^0_m \to \overline{\stunit}(R, \Delta; S, \Theta; \Phi / \alpha)\]
such that \(g = \widetilde X_m(\mathrm{eval}(g))\) for all \(g \in \widetilde X_\alpha(S, \Theta)\).
\end{lemma}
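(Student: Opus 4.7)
The plan is to use lemma~\ref{form-pres} to present \(\Theta^0_m\) and to define \(\widetilde X_m\) on the generators of that presentation, verifying the defining relations inside \(\widetilde X_\alpha(S, \Theta) \leq \overline{\stunit}(R, \Delta; S, \Theta; \Phi / \alpha)\) by the ``expressions give identities'' technique introduced above. Fix an auxiliary index \(i \notin \{0, \pm m\}\); by lemma~\ref{form-pres} applied with \(j = m\) and \(k = i\), the group \(\Theta^0_m\) is generated by symbols \(u \boxtimes p\) for \(u \in \Theta^0_i\), \(p \in R_{im}\), and \(\phi(a)\) for \(a \in S_{-m, m}\), subject to the relations listed there. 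By lemma~\ref{ring-pres}, every \(a \in S_{-m, m}\) can be written as a finite sum \(\sum_\nu p_\nu a_\nu\) with \(p_\nu \in R_{-m, i}\) and \(a_\nu \in S_{im}\), which I will use to reach \(\phi(a)\) through the generators \(\widetilde X^i_{-m, m}\).

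Direct computation of the Chevalley commutators in \(\unit(S, \Theta)\) shows that \(\mathrm{eval}(\up i{\widetilde X}_m(u, p))\) equals \(u \cdot p\) up to a \(\phi\)-correction coming from the accompanying \(\widetilde X_{-i, m}\)-factor, and that \(\mathrm{eval}(\widetilde X^i_{-m, m}(p, a)) = \phi(pa)\). Matching these \(\mathrm{eval}\)-values, I set on generators \(\widetilde X_m(u \boxtimes p)\) equal to the corresponding \(\up i{\widetilde X}_m(u, -p)\) (with the correction absorbed) and \(\widetilde X_m(\phi(a))\) equal to the dotted sum \(\sum_\nu^\cdot \widetilde X^i_{-m, m}(p_\nu, a_\nu)\) for a chosen decomposition of \(a\). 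Each defining relation of the presentation \(F\) then has to be checked between these images. The additive relations, the bilinearity of \(u \boxtimes p\) in \(u\) and \(p\), the identity \(u \boxtimes ab = (u \cdot a) \boxtimes b\) for \(a \in R_{ii}\), and the identity \(\phi(a) \boxtimes b = \phi(\inv b a b)\) all follow from expressions of the form \(\up{\prod_\beta X_\beta(\mu_\beta)}{\{\widetilde X_\gamma(\nu)\}_\Sigma}\) taken inside rank-\(2\) saturated special subsystems of \(\Phi\) (of types \(\mathsf A_2\), \(\mathsf{BC}_2\), or \(\mathsf A_1 \times \mathsf{BC}_1\)) in which \(\mathrm e_m\) sits strictly inside the cone \(\mathbb R_+ \Sigma + \mathbb R_+ \gamma\): the two admissible expansions, one per choice of extreme root of \(\Sigma\), yield the required identity.

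The main obstacle is the commutator relation \([u \boxtimes a, v \boxtimes b]^\cdot = \phi(-\inv a \inv{\pi(u)} \pi(v) b)\), together with the well-definedness of \(\widetilde X_m(\phi(a))\) independently of the decomposition \(a = \sum_\nu p_\nu a_\nu\); both are trilinear phenomena and are handled using lemma~\ref{associator}. After relabeling to put \(m\) in the role of the index ``\(3\)'' and two auxiliary indices in the roles of ``\(\pm 1\)'' and ``\(\pm 2\)'' (available since \(\ell \geq 3\)), the associator \(\widetilde X^i_{-m, m}(pq, a) \dotminus \widetilde X^i_{-m, m}(p, qa)\), interpreted in the abelianization of \(\widetilde X_\alpha(S, \Theta)\), gives a trilinear map satisfying axioms (A1)--(A4) by direct verification from the rank-\(2\) identities already established, and lemma~\ref{associator} forces it to vanish; this gives independence of decomposition. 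An analogous application, using an expression inside a rank-\(3\) saturated special subset that exhibits the two factors \(u \boxtimes a\) and \(v \boxtimes b\) simultaneously, yields the commutator relation. Once all relations of \(F\) are verified, the universal property of lemma~\ref{form-pres} delivers the homomorphism \(\widetilde X_m \colon \Theta^0_m \to \widetilde X_\alpha(S, \Theta) \leq \overline{\stunit}(R, \Delta; S, \Theta; \Phi / \alpha)\); the identity \(g = \widetilde X_m(\mathrm{eval}(g))\) for each generator \(g\) of \(\widetilde X_\alpha(S, \Theta)\) holds by construction, and uniqueness is forced by surjectivity of \(\mathrm{eval} \colon \widetilde X_\alpha(S, \Theta) \to \Theta^0_m\), itself immediate from lemmas~\ref{ring-pres} and~\ref{form-pres}.
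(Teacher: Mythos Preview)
Your outline has the right skeleton—lemma~\ref{form-pres} to present \(\Theta^0_m\), lemma~\ref{ring-pres} for the \(\phi\)-part, and the ``expressions give identities'' mechanism for the rank-\(2\) relations—but it misidentifies the tool that carries the remaining work. Lemma~\ref{associator} is not used at all in the ultrashort case; the paper reserves it for the short-root lemma~\ref{sh-new-root}, and only when \(\ell = 3\). Here, with \(\ell \geq 3\) there are two auxiliary indices \(i \neq \pm j\) distinct from \(\pm m\), and the single expression \(\up{X_{-m,j}(p)\, X_{-m,i}(q)\, X_{ji}(r)}{\{\widetilde X_{im}(a)\}}\) already gives \(\widetilde X^i_{-m,m}(q + pr, a) = \widetilde X^i_{-m,m}(q, a)\, \widetilde X^j_{-m,m}(p, ra)\), whence both biadditivity and \(\widetilde X^i_{-m,m}(pq, a) = \widetilde X^j_{-m,m}(p, qa)\) for arbitrary \(i, j\) follow directly—no associator argument is needed. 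Your proposed associator \(\widetilde X^i_{-m,m}(pq, a) \dotminus \widetilde X^i_{-m,m}(p, qa)\) with a fixed \(i\) does not match the shape of lemma~\ref{associator} anyway, and with \(l = -m\) the axioms (A2)--(A4) have no evident analogue.

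More seriously, the commutator relation \([u \boxtimes a, v \boxtimes b]^\cdot = \phi(-\inv a\,\inv{\pi(u)}\,\pi(v)\,b)\) is not an associator-type identity and cannot be extracted from lemma~\ref{associator}. The tool you are missing is lemma~\ref{elim-diag}: it shows that every \(g \in \widetilde X_\alpha(S, \Theta)\) acts on each \(Z_\beta(\mu, \nu)\) through \(T_m(\mathrm{eval}(g)) \in \diag(R, \Delta; \Phi/\alpha)\), and expanding \(\up i{\widetilde X}_m(u, p)\) as a product of such generators gives \([g, \up i{\widetilde X}_m(u, p)] = \widetilde X_{-m,m}\bigl(\inv p\,\inv{\pi(u)}\,\pi(\mathrm{eval}(g))\bigr)\) at once. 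The same lemma yields centrality of the elements \(\widetilde X^i_{-m,m}(p, a)\), which you also need but do not mention. Finally, you omit the third generator family \(\widetilde X^i_m(s, a)\) with \(s \in \Delta^0_i\); the identity \(\widetilde X^i_m(s, a) = \widetilde X_m(s \cdot a)\) is a separate step requiring its own pair of rank-\(2\) expressions.
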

\begin{proof}
Lemma \ref{elim-diag} implies that the elements \(\widetilde X^i_{-m, m}(p, a)\) lie in the center of \(\widetilde X_\alpha(S, \Theta)\). Expanding both sides of the identities
\begin{align*}
    \up{X_{-m, i}(p)}{\{\widetilde X_{im}(a + b)\}}
    &=
    \up{X_{-m, i}(p)}{\{\widetilde X_{im}(a)\}}\, \up{X_{-m, i}(p)}{\{\widetilde X_{im}(b)\}},
    \\
    \up{X_{-m, j}(p)\, X_{-m, i}(q)\, X_{ji}(r)}{\{\widetilde X_{im}(a)\}}
    &=
    \up{X_{-m, i}(q)\, \up{X_{-m, j}(p)}{X_{ji}(r)}\, X_{-m, j}(p)}{\{\widetilde X_{im}(a)\}},
    \\
    \up{X_{-m, i}(p)\, X_{jm}(-q)}{\{\widetilde X_{ij}(a)\}}
    &=
    \up{X_{jm}(-q)\, X_{-m, i}(p)}{\{\widetilde X_{ij}(a)\}}
\end{align*}
for \(i \neq \pm j\) using (Chev) (possibly with further applications of (Hom)) we obtain  It is easy to check that
\begin{align*}
    \widetilde X^i_{-m, m}(p, a + b)
    &=
    \widetilde X^i_{-m, m}(p, a)\, \widetilde X^i_{-m, m}(p, b),
    \\
    \widetilde X^i_{-m, m}(q + pr, a)
    &=
    \widetilde X^i_{-m, m}(q, a)\, \widetilde X^j_{-m, m}(p, ra),
    \\
    \widetilde X^i_{-m, m}(p, aq)
    &=
    \widetilde X^{-j}_{-m, m}(\inv q, -\inv a \inv p).
\end{align*}
From the second identity we easily get \(\widetilde X^i_{-m, m}(p + q, a) = \widetilde X^i_{-m, m}(p, a)\, \widetilde X^i_{-m, m}(q, a)\) and \(\widetilde X^i_{-m, m}(pq, a) = \widetilde X^j_{-m, m}(p, qa)\) for all \(i, j \neq \pm m\). Hence by lemma \ref{ring-pres} there is a unique homomorphism
\[\widetilde X_{-m, m} \colon S_{-m, m} \to \overline{\stunit}(R, \Delta; S, \Theta; \Phi / \alpha)\]
such that \(\widetilde X^k_{-m, m}(p, a) = \widetilde X_{-m, m}(pa)\) and \(\widetilde X_{-m, m}(a) = \widetilde X_{-m, m}(-\inv a)\).

Similarly expanding the identities
\begin{align*}
    \up{X_{im}(-p)}{\{\widetilde X_i(u \dotplus v)\}}
    &=
    \up{X_{im}(-p)}{\{\widetilde X_i(u)\}}\, \up{X_{im}(-p)}{\{\widetilde X_i(v)\}},
    \\
    \up{X_{jm}(-r)\, X_{-j, i}(p)}{\{\widetilde X_{ij}(a)\}}
    &=
    \up{\up{X_{jm}(-r)}{X_{-j, i}(p)}\, X_{jm}(-r)}{\{\widetilde X_{ij}(a)\}},
    \\
    \up{X_{im}(-p)\, X_{jm}(-q)\, X_{ji}(-r)}{\{\widetilde X_j(u)\}}
    &=
    \up{X_{jm}(-q)\, \up{X_{im}(-p)}{X_{ji}(-r)}\, X_{im}(-p)}{\{\widetilde X_j(u)\}}
\end{align*}
for \(i \neq \pm j\), we get
\begin{align*}
    \up i{\widetilde X}_m(u \dotplus v, p)
    &=
    \up i{\widetilde X}_m(u, p)\, \up i{\widetilde X}_m(v, p),
    \\
    \up j{\widetilde X_m}(\phi(pa), r)
    &=
    \widetilde X_{-m, m}(\inv r par),
    \\
    \up j{\widetilde X}_m(u, rp + q)
    &=
    \up i{\widetilde X}_m(u \cdot r, p)\, \widetilde X_{-m, m}(\inv q \rho(u) rp)\, \up j{\widetilde X}_m(u, q).
\end{align*}
The second identity may be generalized to \(\up i{\widetilde X}_m(\phi(a), p) = \widetilde X_{-m, m}(\inv pap)\). The last identity is equivalent to \(\up j{\widetilde X}_m(u, rp) = \up i{\widetilde X}_m(u \cdot r, p)\) and \(\up j{\widetilde X}_m(u, rp + q) = \up j{\widetilde X}_m(u, rp)\, \widetilde X_{-m, m}(\inv q \rho(u) rp)\, \up j{\widetilde X}_m(u, q)\) for \(i \neq \pm j\). Hence \(\up i{\widetilde X}_m(u, p + q) = \up i{\widetilde X}_m(u, p)\, \widetilde X_{-m, m}(\inv q \rho(u) p)\, \up i{\widetilde X}_m(u, q)\) and \(\up j{\widetilde X}_m(u, pq) = \up i{\widetilde X}_m(u \cdot p, q)\) for all \(i, j \neq \pm m\). Moreover, lemma \ref{elim-diag} implies that \([g, \up i{\widetilde X}_m(u, p)] = \widetilde X_{-m, m}\bigl(\inv p \inv{\pi(u)} \pi(\mathrm{eval}(g))\bigr)\) for all \(g \in \widetilde X_\alpha(S, \Theta)\). Now lemma \ref{form-pres} gives the unique homomorphism
\[\widetilde X_m \colon \Theta^0_m \to \overline{\stunit}(R, \Delta; S, \Theta; \Phi / \alpha)\]
such that \(\widetilde X_{-m, m}(a) = \widetilde X_m(\phi(a))\) and \(\up i{\widetilde X}_m(u, p) = \widetilde X_m(u \cdot p)\).

The remaining identity \(\widetilde X_m^i(s, a) = \widetilde X_m(s \cdot a)\) may be proved by expanding both sides of
\begin{align*}
    \up{X_i(\dotminus s)}{\{\widetilde X_{im}(-a - b)\}}
    &=
    \up{X_i(\dotminus s)}{\{\widetilde X_{im}(-a)\}}\,
    \up{X_i(\dotminus s)}{\{\widetilde X_{im}(-b)\}},
    \\
    \up{X_j(\dotminus s)\, X_{im}(-p)}{\{\widetilde X_{ji}(-a)\}}
    &=
    \up{X_{im}(-p)\, X_j(\dotminus s)}{\{\widetilde X_{ji}(-a)\}}
\end{align*}
for \(i \neq \pm j\) using (Chev). We get
\begin{align*}
    \widetilde X^i_m(s, a + b)
    &=
    \widetilde X^i_m(s, a)\, \widetilde X_{-m, m}(\inv{\,b\,} \rho(s) a)\, \widetilde X^i_m(s, b),
    \\
    \widetilde X^j_m(s, ap)
    &=
    \widetilde X_m(s \cdot ap)
    \text{ for } i \neq \pm j. \qedhere
\end{align*}
\end{proof}

\begin{lemma} \label{sh-new-root}
Suppose that \(\alpha = \mathrm e_m - \mathrm e_l\). Then there is unique homomorphism
\[\widetilde X_{lm} \colon S_{lm} \to \overline{\stunit}(R, \Delta; S, \Theta; \Phi / \alpha)\]
such that \(g = \widetilde X_{lm}(\mathrm{eval}(g))\) for all \(g \in \widetilde X_\alpha(S, \Theta)\).
\end{lemma}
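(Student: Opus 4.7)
The plan is to mimic closely the two-stage strategy used in Lemma \ref{ush-new-root}. In Stage~1 I would construct the homomorphism $\widetilde X_{lm}$ using only the ``pure short-root'' generators $\widetilde X^i_{lm}(p, a)$ and their symmetric partners $\up i{\widetilde X}_{lm}(a, p)$ for $i \notin \{0, \pm l, \pm m\}$. In Stage~2 I would show that each of the six remaining types of generators of $\widetilde X_\alpha(S, \Theta)$ (those coming from conjugation by ultrashort-root elements) also equals $\widetilde X_{lm}(\mathrm{eval}(-))$.

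For Stage~1, lemma \ref{elim-diag} puts each $\widetilde X^i_{lm}(p, a)$ in the center of $\widetilde X_\alpha(S, \Theta)$, since no conjugation by another generator can change a short-root coefficient via the Chevalley formulas for type $\mathsf{BC}$. The three expressions
\[
\up{X_{li}(p)}{\{\widetilde X_{im}(a + b)\}}, \quad
\up{X_{li}(p)\, X_{li}(q)}{\{\widetilde X_{im}(a)\}}, \quad
\up{X_{lj}(p)\, X_{li}(q)\, X_{ji}(r)}{\{\widetilde X_{im}(a)\}} \text{ for } i \neq \pm j
\]
then give, respectively, additivity in $a$, additivity in $p$, and the mixed identity $\widetilde X^i_{lm}(q + pr, a) = \widetilde X^i_{lm}(q, a)\, \widetilde X^j_{lm}(p, ra)$, whence $\widetilde X^i_{lm}(pq, a) = \widetilde X^j_{lm}(p, qa)$ and full bilinearity. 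Lemma \ref{ring-pres} then produces the desired unique homomorphism $\widetilde X_{lm} \colon S_{lm} \to \overline{\stunit}(R, \Delta; S, \Theta; \Phi / \alpha)$ with $\widetilde X^i_{lm}(p, a) = \widetilde X_{lm}(pa)$, and the symmetry $X_{ij}(a) = X_{-j, -i}(-\inv a)$ supplies the analogous formula for $\up i{\widetilde X}_{lm}(a, p)$.

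For Stage~2, I would give identities from three-dimensional rank-two expressions whose third root lies outside $\mathbb R \alpha$, in order to rewrite each ultrashort-type generator in terms of Stage~1 elements. For instance, $\up{X_{-l}(s)\, X_{-l, k}(p)}{\{\widetilde X_{km}(a)\}}$ with $k \notin \{0, \pm l, \pm m\}$ expresses $\widetilde X^{-l}_{lm}(s, pa)$ up to a $\widetilde X^k_{lm}$-factor already handled by Stage~1; analogous three-root expansions dispose of $\up{-m}{\widetilde X}_{lm}$, $\widetilde X^\pi_{lm}$, $\up \pi{\widetilde X}_{lm}$, $\widetilde X^{-m}_{lm}$, and $\up{-l}{\widetilde X}_{lm}$. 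For the families whose arguments contain odd form parameters I would invoke lemma \ref{form-pres} to decompose those arguments into $u \cdot p \dotplus \phi(c)$ and thereby reduce to the ring cases, additivity and well-definedness in the odd form variables being proved by further giving identities in the spirit of Stage~1.

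The main obstacle will be compatibility: the value of $\widetilde X_{lm}(pa)$ reconstructed via two different mediating indices must agree, and each Stage~2 reduction must be independent of the chosen auxiliary index $k$. Because $\alpha$ lies strictly inside the convex cone of every giving expression we manipulate, any discrepancy between two such reductions is a trilinear map on $R_{li} \otimes_{\mathbb Z} R_{ij} \otimes_{\mathbb Z} S_{jm}$ (or a hermitian analogue of it) satisfying axioms (A1)--(A4) of lemma \ref{associator}; restricting to the strong orthogonal hyperbolic subfamily indexed by $\{|l|, i, |m|\}$ to put ourselves in the case $\ell = 3$, that lemma forces the discrepancy to vanish. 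Uniqueness of $\widetilde X_{lm}$ is then automatic since its values on the products $pa$, which generate $S_{lm}$ by lemma \ref{ring-pres}, are dictated by $g = \widetilde X_{lm}(\mathrm{eval}(g))$.
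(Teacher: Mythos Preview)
Your outline has the right ingredients and correctly anticipates that lemma~\ref{associator} is the crux, but the clean two-stage split breaks down when \(\ell = 3\), which is the only case where that lemma is actually invoked. With \(\alpha = \mathrm e_m - \mathrm e_l\) and \(\ell = 3\) the indices \(i\) available for \(\widetilde X^i_{lm}\) are just \(\pm k\) for the single remaining \(k\), so your mixed identity \(\widetilde X^i_{lm}(pq, a) = \widetilde X^j_{lm}(p, qa)\), which needs \(i \neq \pm j\), is vacuous. Lemma~\ref{ring-pres} therefore does \emph{not} yet produce a homomorphism out of \(S_{lm}\): one must first show that the associator \(\{p \otimes q \otimes a\}_{ij} = \widetilde X^j_{lm}(pq, a)\, \widetilde X^i_{lm}(p, qa)^{-1}\) vanishes for \(i, j \in \{k, -k\}\).

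The paper does this via lemma~\ref{associator}, but its hypotheses (A2)--(A4) are not formal consequences of biadditivity. They are deduced from the identities labelled (B3), (B5)--(B7), every one of which involves an ultrashort-type generator \(\widetilde X^{-l}_{lm}\) or \(\widetilde X^\pi_{lm}\) (and the paper also needs the preliminary vanishing \(\widetilde X^\pi_{lm}(s, \phi(a)) = \widetilde X^\pi_{lm}(\phi(p), u) = 1\) before it can even assert centrality of the relevant generators). In other words, the ultrashort relations are an \emph{input} to the construction of \(\widetilde X_{lm}\) when \(\ell = 3\), not a Stage~2 cleanup, and your plan provides no mechanism for supplying them at that point. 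The sentence ``any discrepancy \ldots\ is a trilinear map \ldots\ satisfying axioms (A1)--(A4)'' is precisely the unjustified step: you will not be able to verify (A2)--(A4) using only the short-root data of Stage~1.
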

\begin{proof}
First of all, there is a nontrivial element from the Weyl group \(\mathrm W(\Phi)\) stabilizing \(\alpha\), it exchanges \(\pm \mathrm e_l\) with \(\mp \mathrm e_m\). It gives a duality between the generators of \(\widetilde X_\alpha(S, \Theta)\) as follows:
\begin{align*}
\widetilde X^i_{lm}(p, a) &\leftrightarrow \up{-i}{\widetilde X_{lm}}(\inv a, -\inv p), &
\widetilde X^{-l}_{lm}(s, a) &\leftrightarrow \up{-m}{\widetilde X_{lm}}(-\inv a, \dotminus s), \\
\widetilde X^\pi_{lm}(s, u) &\leftrightarrow \up \pi{\widetilde X_{lm}}(\dotminus u, s), &
\widetilde X^{-m}_{lm}(p, u) &\leftrightarrow \up{-l}{\widetilde X_{lm}}(\dotminus u, -\inv p).
\end{align*}
So it suffices to prove only one half of the identities between the generators. Expanding both sides of
\begin{align*}
    \up{X_{i, -l}(q)\, X_{-l}(s)\, X_{li}(-p)}{\{\widetilde X_m(\dotminus u)\}}
    &=
    \up{X_{-l}(s)\, \up{X_{i, -l}(q)}{X_{li}(-p)}\, X_{i, -l}(q)}{\{\widetilde X_m(\dotminus u)\}},
    \\
    \up{X_{-m, i}(-p)\, X_{-l}(s)}{\{\widetilde X_{im}(a)\}}
    &=
    \up{X_{-l}(s)\, X_{-m, i}(-p)}{\{\widetilde X_{im}(a)\}}
\end{align*}
using (Chev) we obtain
\begin{align*}
    \widetilde X^\pi_{lm}(s \dotplus \phi(pq), u)
    &=
    \widetilde X^\pi_{lm}(s, u),
    \\
    \widetilde X^\pi_{lm}(s, \phi(pa))
    &=
    1;
\end{align*}
in particular, \(\widetilde X^\pi_{lm}(s, \phi(a)) = \widetilde X^\pi_{lm}(\phi(p), u) = 1\). From this and lemma \ref{elim-diag} we easily obtain that \(\widetilde X^i_{lm}(p, a)\), \(\widetilde X^\pi_{lm}(s, u)\), \(\widetilde X^{-l}_{lm}(s, a)\) lie in the center of \(\widetilde X_\alpha(S, \Theta)\).

Similarly expanding both sides of
\begin{align*}
    \up{X_{li}(p)\, X_{lj}(r)\, X_{ij}(q)}{\{\widetilde X_{jm}(a)\}}
    &=
    \up{X_{lj}(r)\, \up{X_{li}(p)}{X_{ij}(q)}\, X_{li}(p)}{\{\widetilde X_{jm}(a)\}},
    \\
    \up{X_{li}(p)}{\{\widetilde X_{im}(a + b)\}}
    &=
    \up{X_{li}(p)}{\{\widetilde X_{im}(a)\}}\,
    \up{X_{li}(p)}{\{\widetilde X_{im}(b)\}},
    \\
    \up{X_{-m, i}(-q)\, X_{li}(r)\, X_{l, -m}(p)}{\{\widetilde X_{im}(a)\}}
    &=
    \up{X_{li}(r)\, \up{X_{-m, i}(-q)}{X_{l, -m}(p)}\, X_{-m, i}(-q)}{\{\widetilde X_{im}(a)\}}
\end{align*}
for \(i \neq j\) we get
\begin{align*}
    \widetilde X_{lm}^i(p, qa)\,
    \widetilde X_{lm}^j(r, a)
    &=
    \widetilde X_{lm}^j(pq + r, a),
    \\
    \widetilde X^i_{lm}(p, a + b)
    &=
    \widetilde X^i_{lm}(p, a)\, \widetilde X^i_{lm}(p, b),
    \\
    \widetilde X^{-m}_{lm}(-p, \phi(qa))\,
    \widetilde X^i_{lm}(pq + r, a)
    &=
    \up{-i}{\widetilde X}_{lm}(p \inv a, \inv q)\,
    \widetilde X^i_{lm}(r, a).
\end{align*}
The first identity is equivalent to
\begin{align*}
\widetilde X_{lm}^i(p, qa)
&=
\widetilde X_{lm}^j(pq, a)
; \tag{B1} \\
\widetilde X_{lm}^j(pq + r, a)
&=
\widetilde X_{lm}^j(pq, a)\,
\widetilde X_{lm}^j(r, a)
\end{align*}
for \(i \neq \pm j\) and the third one is equivalent to
\begin{align*}
\widetilde X^{-m}_{lm}(-p, \phi(qa))\,
\widetilde X^i_{lm}(pq, a)
&=
\up{-i}{\widetilde X}_{lm}(p \inv a, \inv q); \tag{B2} \\
\widetilde X^i_{lm}(pq + r, a)
&=
\widetilde X^i_{lm}(pq, a)\,
\widetilde X^i_{lm}(r, a).
\end{align*}
It follows that the maps \(\widetilde X^i_{lm}(-, =)\) are biadditive.

Now we expand both sides of
\begin{align*}
    \up{X_{i, -l}(q)\, X_{li}(-p)}{\{\widetilde X_{-l, m}(a)\}}
    &=
    \up{\up{X_{i, -l}(q)}{X_{li}(-p)}\, X_{i, -l}(q)}{\{\widetilde X_{-l, m}(a)\}},
    \\
    \up{X_{-l}(s)\, X_i(t)}{\{\widetilde X_{im}(-a)\}}
    &=
    \up{\up{X_i(t)}{X_{-l}(s)}\, X_{-l}(s)}{\{\widetilde X_{im}(-a)\}}
\end{align*}
and obtain
\begin{align*}
    \widetilde X^{-l}_{lm}(\phi(pq), a)
    &=
    \widetilde X^i_{lm}(p, qa)\,
    \widetilde X^{-i}_{lm}(-\inv q, \inv pa), \tag{B3}
    \\
    \widetilde X^\pi_{lm}(s, t \cdot a)
    &=
    \widetilde X^i_{lm}(\inv{\pi(s)} \pi(t), a). \tag{B4}
\end{align*}
Using (Chev) and (B4) the identities
\begin{align*}
    \up{X_{li}(p)\, X_{-i}(s)}{\{\widetilde X_{-i, m}(a)\}}
    &=
    \up{\up{X_{li}(p)}{X_{-i}(s)}\, X_{li}(p)}{\{\widetilde X_{-i, m}(a)\}},
    \\
    \up{X_{-l, i}(p)\, X_{-l}(s)}{\{\widetilde X_{im}(a)\}}
    &=
    \up{\up{X_{-l, i}(p)}{X_{-l}(s)}\, X_{-l, i}(p)}{\{\widetilde X_{im}(a)\}},
    \\
    \up{X_{i, -l}(-p)\, X_i(t)}{\{\widetilde X_{-l, m}(a)\}}
    &=
    \up{\up{X_{i, -l}(-p)}{X_i(t)}\, X_{i, -l}(-p)}{\{\widetilde X_{-l, m}(a)\}}
\end{align*}
may be simplified to
\begin{align*}
    \widetilde X^i_{lm}(p, \rho(s) a)
    &=
    \widetilde X^{-i}_{lm}(p \rho(s), a), \tag{B5}
    \\
    \widetilde X^{-l}_{lm}(s, pa)
    &=
    \widetilde X^i_{lm}(\rho(s) p, a), \tag{B6}
    \\
    \widetilde X^{-l}_{lm}(t \cdot p, a)
    &=
    \widetilde X^i_{lm}(\inv p \rho(t), pa). \tag{B7}
\end{align*}

We are ready to construct a homomorphism \(\widetilde X_{lm}\) such that \(\widetilde X_{lm}^i(p, a) = \widetilde X_{lm}(pa)\) using lemma \ref{ring-pres}. If \(\ell \geq 4\), then it exists by (B1). Otherwise we may assume that \(l = 1\), \(m = 3\), and apply lemma \ref{associator} to
\[\{p \otimes q \otimes a\}_{ij} = \widetilde X_{lm}^j(pq, a)\, \widetilde X_{lm}^i(p, qa)^{-1}.\]
Namely, (B3) and (B7) imply (A2); (B3) and (B6) imply (A3); and (B3) implies (A4).

It remains to express the remaining generators via \(\widetilde X_{lm}\). For \(\widetilde X^\pi_{lm}\), \(\widetilde X^{-l}_{lm}\), and \(\widetilde X^{-m}_{lm}\) this follows from simplifying the identities
\begin{align*}
    \up{X_{-l}(s)}{\{\widetilde X_m(\dotminus u \dotminus v)\}}
    &=
    \up{X_{-l}(s)}{\{\widetilde X_m(\dotminus u)\}}\,
    \up{X_{-l}(s)}{\{\widetilde X_m(\dotminus v)\}},
    \\
    \up{X_{-l}(s)}{\{\widetilde X_{-l, m}(a + b)\}}
    &=
    \up{X_{-l}(s)}{\{\widetilde X_{-l, m}(a)\}}\,
    \up{X_{-l}(s)}{\{\widetilde X_{-l, m}(b)\}},
    \\
    \up{X_{li}(p)\, X_{l, -m}(-r)\, X_{i, -m}(-q)}{\{\widetilde X_m(\dotminus u)\}}
    &=
    \up{X_{l, -m}(-r)\, \up{X_{li}(p)}{X_{i, -m}(-q)}\, X_{li}(p)}{\{\widetilde X_m(\dotminus u)\}}
\end{align*}
using (Chev), (B4), (B6). We obtain
\begin{align*}
    \widetilde X^\pi_{lm}(s, v \dotplus u)
    &=
    \widetilde X^\pi_{lm}(s, u)\, \widetilde X^\pi_{lm}(s, v),
    \\
    \widetilde X^{-l}_{lm}(s, a + b)
    &=
    \widetilde X^{-l}_{lm}(s, a)\, \widetilde X^{-l}_{lm}(s, b),
    \\
    \widetilde X^{-m}_{lm}(pq + r, u)
    &=
    \widetilde X^i_{lm}(p, q \rho(u))\,
    \widetilde X^{-m}_{lm}(r, u).
\end{align*}
The dual generators may be expressed via \(\widetilde X_{lm}(\mu)\) using (B2).
\end{proof}

The elements \(\widetilde X_\alpha(\mu)\) constructed by lemmas \ref{ush-new-root} and \ref{sh-new-root} satisfy all the missing Steinberg relations in \(\overline \stunit(R, \Delta; S, \Theta; \Phi / \alpha)\). Also,
\[\widetilde X_\alpha(\mu)\, g\, \widetilde X_\alpha(\mu)^{-1} = \up{T_\alpha(\mu)}g \tag{*}\]
for any \(g\), this is easy to check for \(g = Z_\beta(\lambda, \nu)\) expressing \(\widetilde X_\alpha(\mu)\) via \(Z_\gamma(\lambda', \nu')\), where \(\gamma\) and \(\beta\) are linearly independent.

\section{Presentation of relative unitary Steinberg groups} \label{sec-pres}

We are ready to construct a conjugacy calculus with respect to \(H\) on \(\overline \stunit(R, \Delta; S, \Theta; \Phi / \alpha)\). For a special subset \(\Sigma \subseteq \Phi / \alpha\) and \(g \in \stunit(R, \Delta; \Sigma)\) let
\begin{align*}
\up g{\{\widetilde X_\beta(\mu)\}_{\pi_\alpha^{-1}(\Sigma)}} &= \up g{\{X_{\pi_\alpha(\beta)}(\mu')\}} \text{ for } \beta \notin \mathbb R \alpha; \\
\up g{\{\widetilde X_\beta(\mu)\}_{\pi_\alpha^{-1}(\Sigma)}} &= [g, T_\beta(\mu)]\, \widetilde X_\beta(\mu) \text{ for } \beta \in \mathbb R \alpha
\end{align*}
be the elements of \(\overline \stunit(R, \Delta; S, \Theta; \Phi / \alpha)\). Here \(\mu' \in S \cup \Theta\) is the element with the property \(T_{\pi_\alpha(\beta)}(\mu') = T_\beta(\mu) \in \unit(S, \Theta)\), \(T_\beta(\mu)\) naturally acts on \(\stunit(S \rtimes R, \Theta \rtimes \Delta; \Sigma)\) in the second formula.

\begin{lemma} \label{conj-comm}
Let \(\beta\) and \(\gamma\) be linearly independent roots of \(\Phi\) such that \(\alpha\) lies strictly inside the angle \(\mathbb R_+ \beta + \mathbb R_+ \gamma\). Then
\[\up{g X_\beta(\mu)}{\{\widetilde X_\gamma(\nu)\}_{\pi_\alpha^{-1}(\pi_\alpha(\beta))}} = \prod_{\substack{i \beta + j \gamma \in \Phi; \\ i \geq 0, j > 0}} \up g {\{\widetilde X_{i \beta + j \gamma}(f_{\beta \gamma ij}(\mu, \nu))\}_{\pi_\alpha^{-1}(\pi_\alpha(\beta))}}\]
for all \(g \in \stunit(R, \Delta; \pi_\alpha(\beta))\).
\end{lemma}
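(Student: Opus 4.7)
The plan is to verify that both sides agree as elements of $\overline{\stunit}(R,\Delta;S,\Theta;\Phi/\alpha)$ by expanding them via the defining formulas at the start of section \ref{sec-pres} and the tools developed in sections 5 and 6. The LHS is a single application of the $\Phi/\alpha$-conjugacy calculus, and the RHS is a product of atomic conjugacy-calculus expressions of two types (according to whether $i\beta+j\gamma\in\mathbb R\alpha$). I would split into two cases based on whether $\pi_\alpha(\beta)$ and $\pi_\alpha(\gamma)$ are antiparallel in $\Phi/\alpha$: since $\alpha$ lies strictly inside $\mathbb R_+\beta+\mathbb R_+\gamma$, antiparallelism is equivalent to $\beta+\gamma\in\mathbb R\alpha$.

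In the non-antiparallel case, rewrite the LHS as $\up{gX_{\pi_\alpha(\beta)}(\mu')}{\{X_{\pi_\alpha(\gamma)}(\nu')\}}$ using the first defining formula (available because $\gamma\notin\mathbb R\alpha$), and apply the (Chev) axiom of the already established $\Phi/\alpha$-conjugacy calculus. The resulting product ranges over pairs $(k,l)$ with $k\pi_\alpha(\beta)+l\pi_\alpha(\gamma)\in\Phi/\alpha$. For each such factor $\up g{\{X_{\bar\delta}(\tilde f_{kl})\}}$, decompose the $\Phi/\alpha$-root element $X_{\bar\delta}$ into a product of its constituent $\widetilde X_\delta$-subgroups as dictated by the explicit parametrizations $X_{i,\pm(l\oplus m)}$, $X_{\pm(l\oplus m)}$, $X^{\ominus m}_j$ (these are precisely how the wider $\Phi/\alpha$-parameter modules split in terms of $\Phi$-parameters), and apply (Hom) to split $\up g{\{-\}}$ accordingly. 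By the construction of the Chevalley structure constants of $\Phi/\alpha$, the resulting parameters agree summand by summand with the $f_{\beta\gamma ij}(\mu,\nu)$ for $i\beta+j\gamma\notin\mathbb R\alpha$. Since no $\alpha$-parallel terms appear on the RHS in this case, the two sides match.

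The antiparallel case is where the real work is, and it is the main obstacle. Here (Chev) in $\Phi/\alpha$ is unavailable, so the LHS is governed by the explicit $Z$-construction from proposition \ref{identities} (the ``cannot be used'' branch). The strategy is to exploit the identity $(*)$: each RHS-factor with $i\beta+j\gamma\in\mathbb R\alpha$ equals $[g,T_{i\beta+j\gamma}(f_{ij})]\widetilde X_{i\beta+j\gamma}(f_{ij})=g\widetilde X_{i\beta+j\gamma}(f_{ij})g^{-1}$ when $T_{i\beta+j\gamma}(f_{ij})$ is realised as $\widetilde X_{i\beta+j\gamma}(f_{ij})$ inside $\overline{\stunit}(R,\Delta;S,\Theta;\Phi/\alpha)$. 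Using $(*)$ to slide these factors out past the remaining $\up g{\{\widetilde X_\delta(\ldots)\}}$-terms (each such slide introduces a $T_\alpha$-twist that is absorbed into the Chevalley constants by the equivariance in lemma \ref{elim-diag}), the product collapses to $g\cdot P\cdot g^{-1}$, where $P$ is the full Chevalley product $\widetilde X_\gamma(\nu)\cdot\prod_{(i,j):i>0}\widetilde X_{i\beta+j\gamma}(f_{ij})$ inside $\overline{\stunit}(R,\Delta;S,\Theta;\Phi/\alpha)$. By the Steinberg relations among the $\widetilde X_\delta$ (valid for all $\delta\in\Phi$ after lemmas \ref{ush-new-root} and \ref{sh-new-root}), $P=\widetilde X_\beta(\mu)\widetilde X_\gamma(\nu)\widetilde X_\beta(\mu)^{-1}$, and a final application of the $Z$-definition identifies $g\widetilde X_\beta(\mu)\widetilde X_\gamma(\nu)\widetilde X_\beta(\mu)^{-1}g^{-1}$ with the LHS.

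I expect that the antiparallel case further subdivides by the shape of the $\mathsf{BC}_2$ (or $\mathsf A_1\times\mathsf{BC}_1$) configuration formed by $\{\alpha,\beta,\gamma\}$ in $\mathbb R\alpha\oplus\mathbb R\beta$, and the bookkeeping of signs, factor orders, and the precise parameter translation $\mu\mapsto\mu'$ through the $X^{\ominus m}$/$X_{\pm(l\oplus m)}$ decompositions will require patience. The non-trivial content in both cases is that the $\Phi/\alpha$-Chevalley structure constants, the $\widetilde X$-decompositions of $\Phi/\alpha$-root elements, and the $\Phi$-Chevalley structure constants are mutually compatible; this compatibility was encoded already in section 3, so the proof is ultimately a matter of correctly unpacking the definitions rather than discovering new identities.
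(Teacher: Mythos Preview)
Your case split rests on a geometric error. Since \(\alpha\) lies strictly inside the angle \(\mathbb R_+\beta+\mathbb R_+\gamma\), we have \(\alpha=s\beta+t\gamma\) with \(s,t>0\), so in \(\mathbb R^\ell/\mathbb R\alpha\) the images satisfy \(s\,\pi_\alpha(\beta)=-t\,\pi_\alpha(\gamma)\). Hence \(\pi_\alpha(\beta)\) and \(\pi_\alpha(\gamma)\) are \emph{always} antiparallel, not merely when \(\beta+\gamma\in\mathbb R\alpha\); your ``non-antiparallel case'' is vacuous, and the entire content of the lemma lies in the case where (Chev) for \(\Phi/\alpha\) is unavailable.

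Your argument for that remaining case has a type error that cannot be repaired. You write that the right side ``collapses to \(g\cdot P\cdot g^{-1}\)'' with \(P=\widetilde X_\beta(\mu)\,\widetilde X_\gamma(\nu)\,\widetilde X_\beta(\mu)^{-1}\), and then invoke the Steinberg relations among the \(\widetilde X_\delta\). But \(\mu\) lives in \(R\cup\Delta\) (it parametrizes a conjugating element of \(\stunit(R,\Delta)\)), whereas the elements \(\widetilde X_\delta(-)\) of \(\overline\stunit(R,\Delta;S,\Theta;\Phi/\alpha)\) are defined only for arguments in \(S\cup\Theta\); there is no \(\widetilde X_\beta(\mu)\) in that group. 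Likewise \(g\in\stunit(R,\Delta;\pi_\alpha(\beta))\) is not an element of \(\overline\stunit(R,\Delta;S,\Theta;\Phi/\alpha)\), so ``\(g\cdot P\cdot g^{-1}\)'' is not an internal conjugation. Producing such an action of \(\stunit(R,\Delta)\) is exactly the goal of theorem~\ref{root-elim}, and lemma~\ref{conj-comm} is one of its inputs; invoking that action here is circular. The identity \((*)\) only gives conjugation by \(\widetilde X_\alpha(\mu)\) with \(\mu\in S\cup\Theta\), not by \(X_\beta(\mu)\) with \(\mu\in R\cup\Delta\).

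The paper avoids these issues by direct computation: it writes down, case by case, explicit \(\Phi/\alpha\)-conjugacy-calculus expressions (three for the generic configurations of \(\alpha,\beta,\gamma\), and two further ones handled by separate manipulations) and evaluates each in two ways. After cancelling a common factor from \(\stunit(S,\Theta;\Sigma)\), verified by passing to \(\unit(S,\Theta)\), what remains is precisely the claimed identity. There is no abstract shortcut; the lemma is a finite list of concrete calculations in the same spirit as the construction of \(\widetilde X_{lm}\) and \(\widetilde X_m\) in lemmas~\ref{ush-new-root} and~\ref{sh-new-root}.
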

\begin{proof}
We evaluate the expressions
\begin{align*}
\up{X_{lj}(p)\, X_{l \oplus m, i}(q)\, X_{ji}(r)}{\{\widetilde X_{im}(a)\}} &\text{ for } \alpha = \mathrm e_m - \mathrm e_l; \\
\up{X_{i, -l}(p)\, X_{-(l \oplus m)}(s)\, X_{li}(q)\, X_{mi}(r)\, X_i(t)}{\{\widetilde X_m(u)\}} &\text{ for } \alpha = \mathrm e_m - \mathrm e_l; \\
\up{X_{-m, j}(p)\, X_j(s)\, X_{-i, j}(q)\, X_i^{\ominus m}(t)\, X_{ji}(r)}{\{\widetilde X_{im}(a)\}} &\text{ for } \alpha = \mathrm e_m
\end{align*}
in two ways as products of an element from \(\stunit(S, \Theta; \Sigma)\) for a special \(\Sigma \subseteq \Phi\) by an element of the type \(\up g{\{\widetilde X_\beta(\mu)\}}_{\pi_\alpha^{-1}(\pi_\alpha(\beta))}\) assuming that the indices and their opposites are distinct and non-zero. During the calculations it is convenient to put the factors \(\widetilde X_{im}(a)\) and \(\widetilde X_m(u)\) inside the curly brackets in the rightmost position. After cancellation we obtain particular cases of the required identity. This easily follows by considering the images in \(\unit(S, \Theta)\) without evaluating the first factors.

The remaining two cases follow from
\begin{align*}
\up{X_{-(l \oplus m)}(s)}{\{\widetilde X_{im}(pa)\}}\, &\up{X_{-(l \oplus m)}(s \dotminus t \cdot (-p) \dotplus \phi(qp))}{\{\widetilde X_{-l, m}(a)\}} \\
&\equiv
\up{X_{i, -l}(p)\, X_{-(l \oplus m)}(s \dotminus t \cdot (-p) \dotplus \phi(qp))\, X_{li}(q)\, X_i(t)}{\{\widetilde X_{-l, m}(a)\}} \\
&= \up{X_{li}(q + \inv p \inv{\rho(t)})\, X_i(t)\, X_{-(l \oplus m)}(s)\, \widetilde X_{i, -l}(p)}{\{\widetilde X_{-l, m}(a)\}} \\
&\equiv \up{X_{-(l \oplus m)}(s)}{\{\widetilde X_{im}(pa)\, \widetilde X_m(t \cdot pa \dotminus \phi(\inv a qpa))\, \widetilde X_{-l, m}(a)\}}
\end{align*}
up to a factor from \(\stunit(S, \Theta; \langle -\mathrm e_m, \mathrm e_m - \mathrm e_l, \mathrm e_i + \mathrm e_l \rangle)\) on the left for \(\alpha = \mathrm e_m - \mathrm e_l\) and
\begin{align*}
\up{X^{\ominus m}_i(s \dotplus q_{-m} \cdot qp)}{\{\widetilde X_{-i}(u)\}} &\equiv \up{\widetilde X_{ji}(p)\, X^{\ominus m}_i(s \dotplus q_{-m} \cdot qp)\, \widetilde X_{-m, j}(q)}{\{\widetilde X_{-i}(u)\}} \\
&= \up{\widetilde X_{-m, j}(q)\, X^{\ominus m}_i(s)\, \widetilde X_{ji}(p)}{\{\widetilde X_{-i}(u)\}} \\
&\equiv \up{X^{\ominus m}_i(s)}{\{\widetilde X_{im}(\rho(u) \inv p \inv q)\, \widetilde X_{-i}(u)\}}
\end{align*}
up to a factor from \(\stunit(S, \Theta; \langle \mathrm e_m, \mathrm e_i - \mathrm e_m, -\mathrm e_i - \mathrm e_j \rangle)\) on the left for \(\alpha = \mathrm e_m\).
\end{proof}

\begin{theorem} \label{root-elim}
Let \(\delta \colon (S, \Theta) \to (R, \Delta)\) be a crossed module of odd form rings, where \((R, \Delta)\) has a strong orthogonal hyperbolic family of rank \(\ell \geq 4\). Then
\[F_\alpha \colon \overline \stunit(R, \Delta; S, \Theta; \Phi / \alpha) \to \overline \stunit(R, \Delta; S, \Theta; \Phi)\]
is an isomorphism for any \(\alpha \in \Phi\).
\end{theorem}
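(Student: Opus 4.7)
I will build a two-sided inverse to $F_\alpha$ by exhibiting on $\overline\stunit(R, \Delta; S, \Theta; \Phi / \alpha)$ a full conjugacy calculus with respect to the original family $H$ (not just $H / \alpha$). Universality of $\overline\stunit(R, \Delta; S, \Theta; \Phi)$ then furnishes a homomorphism
\[G \colon \overline\stunit(R, \Delta; S, \Theta; \Phi) \to \overline\stunit(R, \Delta; S, \Theta; \Phi / \alpha),\]
and checking $G \circ F_\alpha = \id$ and $F_\alpha \circ G = \id$ on the generating sets $Z_\beta(\mu, \nu)$ will complete the proof. Surjectivity of $F_\alpha$ is already in hand via proposition \ref{elim-sur} (the case of long $\alpha = 2\mathrm e_m$ being absorbed by the ultrashort case $\mathrm e_m$, whose root subgroup contains $X_{2\mathrm e_m}(S, \Theta)$), so the content is injectivity, equivalently existence of the conjugacy calculus.

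\textbf{Construction of the calculus.} The root subgroup elements $\widetilde X_\beta(\mu) \in \overline\stunit(R, \Delta; S, \Theta; \Phi / \alpha)$ for every $\beta \in \Phi$ are already at our disposal: for $\beta \notin \mathbb R \alpha$ they are defined explicitly in section 6, and for $\beta \in \{\alpha, 2\alpha, -\alpha, -2\alpha\}$ they are provided by lemmas \ref{ush-new-root} and \ref{sh-new-root}. For a saturated special $\Sigma \subseteq \Phi$ and $g \in \stunit(R, \Delta; \Sigma)$ the map $\up g{\{-\}_\Sigma}$ must be defined on each $\widetilde X_\beta(\mu)$ and extended by (Hom). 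When $\Sigma \cap \mathbb R\alpha = \varnothing$, $\pi_\alpha$ identifies $\Sigma$ with a saturated special subset of $\Phi / \alpha$, and we simply declare $\up g{\{\widetilde X_\beta(\mu)\}_\Sigma}$ to be the value of the existing calculus with respect to $H / \alpha$. When $\Sigma$ meets $\mathbb R \alpha$, the formula (*) from the end of section 6 lets us move every factor $\widetilde X_{\pm\alpha}(\mu)$, $\widetilde X_{\pm 2\alpha}(\mu)$ of $g$ past the bracket at the cost of acting by $T_{\pm\alpha}(\mu)$, reducing to the previous case. This is the right analogue of the inductive definition used in the second half of the proof of proposition \ref{identities}.

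\textbf{Verification of the axioms.} (Hom) and (Sub) will hold by construction. The crucial axiom is (Chev), and here the content splits into three cases according to the position of $\alpha$ relative to the cone $\mathbb R_+\beta + \mathbb R_+\gamma$. If $\alpha$ is not in this cone (up to sign), the identity reduces to (Chev) for the existing calculus on $\Phi / \alpha$. If $\alpha$ lies on the boundary (i.e.\ $\alpha \in \{\beta, \gamma, -\beta, -\gamma\} \cup 2\{\beta, \gamma, -\beta, -\gamma\}$), we use (*) together with the defining behaviour of $\widetilde X_\alpha$ from lemmas \ref{ush-new-root}, \ref{sh-new-root}. The genuinely new case, where $\alpha$ lies strictly inside the angle $\mathbb R_+\beta + \mathbb R_+\gamma$, is precisely the statement of lemma \ref{conj-comm}. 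The axiom (XMod) follows from (*) and the defining property of the generators of $\widetilde X_\alpha(S, \Theta)$, and (Conj) reduces, after a suitable choice of extreme root, to the linearly independent case already encoded in (Comm) of proposition \ref{identities} together with lemma \ref{elim-diag}.

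\textbf{Main obstacle and conclusion.} The hardest step is the verification of (Chev) when $\alpha$ lies strictly inside the relevant angle, because this is where the calculus with respect to $H / \alpha$ cannot see the commutator in the form demanded by $H$; this is exactly the work done by lemma \ref{conj-comm}, which is why that lemma was proved in the generality needed (five distinct shapes of expressions covering the $\mathsf A_2$, $\mathsf{BC}_2$, $\mathsf A_1 \times \mathsf A_1$, $\mathsf A_1 \times \mathsf{BC}_1$ configurations, with the two final configurations producing long-root commutators requiring $\ell \geq 4$ to find a fourth index). Once all axioms are verified, $\overline\stunit(R, \Delta; S, \Theta; \Phi / \alpha)$ carries a conjugacy calculus with respect to $H$; the universal property of $\overline\stunit(R, \Delta; S, \Theta; \Phi)$ yields $G$, and $F_\alpha \circ G$ fixes every $Z_\beta(\mu, \nu)$ since $F_\alpha(\widetilde X_\beta(\mu))$ is the corresponding element in $\stunit(R, \Delta; S, \Theta; \Phi)$ by construction, while $G \circ F_\alpha$ fixes the generators of $\overline\stunit(R, \Delta; S, \Theta; \Phi / \alpha)$ trivially. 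Hence $F_\alpha$ is an isomorphism.
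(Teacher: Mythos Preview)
Your overall strategy coincides with the paper's: endow $\overline\stunit(R,\Delta;S,\Theta;\Phi/\alpha)$ with a conjugacy calculus with respect to $H$ and invoke universality. However, your treatment of the case $\Sigma \cap \mathbb R\alpha \neq \varnothing$ has a real gap. You propose to write $g = X_\alpha(\mu)\,g'$ and use (*) to reduce to $\Sigma \setminus \langle\alpha\rangle$. This is exactly what the paper does when $\alpha$ is \emph{extreme} in $\Sigma$, for then $\Sigma \setminus \langle\alpha\rangle$ is again saturated special and the inductively defined $\up{g'}{\{-\}_{\Sigma\setminus\langle\alpha\rangle}}$ is available. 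But when $\alpha$ is interior to $\mathbb R_+\Sigma$ --- say $\alpha = \beta + \gamma$ with $\beta,\gamma \in \Sigma$ --- the set $\Sigma \setminus \langle\alpha\rangle$ is not closed, so your reduction has no target. The paper resolves this by a different induction, on the smallest face $\Gamma$ of the cone $\mathbb R_+\Sigma$ containing $\alpha$: one chooses an extreme root $\gamma \in \Gamma \cap \Sigma$ (which is not $\alpha$), applies (Chev) via $\gamma$, and lands in $\Sigma \setminus \langle\gamma\rangle$, whose face through $\alpha$ is strictly smaller. The delicate point is well-definedness when $\dim\Gamma = 2$ and both extreme roots of $\Gamma\cap\Sigma$ are eligible; the paper handles this by decomposing $\widetilde X_\beta(\lambda)$ through an auxiliary root $\delta \notin \mathbb R\Gamma$ with $\langle\Sigma,\delta\rangle$ still special, and it is here (and in lemma \ref{conj-comm}) that the hypothesis $\ell \geq 4$ is actually consumed.

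Two smaller corrections. First, lemma \ref{conj-comm} only treats the configurations where $\alpha$ lies strictly inside the angle $\mathbb R_+\beta + \mathbb R_+\gamma$, i.e.\ the ambient rank-$2$ system is $\mathsf A_2$ or $\mathsf{BC}_2$; the orthogonal types $\mathsf A_1\times\mathsf A_1$ and $\mathsf A_1\times\mathsf{BC}_1$ never arise there. Second, the paper does not organize the verification of (Chev) by your trichotomy on the position of $\alpha$; lemma \ref{conj-comm} is used only in the case $\Sigma \cap \mathbb R\alpha = \varnothing$ with $\langle\alpha,\Sigma\rangle$ two-dimensional (to show the maps are homomorphisms and satisfy (Chev) there), while the case $\alpha \in \Sigma$ is handled entirely by the face induction, with (Hom) and (Chev) checked separately depending on whether $\dim\Gamma \geq 3$ or $\dim\Gamma = 2$.
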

\begin{proof}
We find the inverse homomorphism \(G_\alpha\) by providing a conjugacy calculus with respect to \(H\) on \(\overline \stunit(R, \Delta; S, \Theta; \Phi / \alpha)\). Our construction of the maps \((g, h) \mapsto \up g{\{h\}_\Sigma}\) depends on mutual alignment of \(\Sigma\) and \(\alpha\). We postpone checking (XMod) and (Conj) until the end of the proof since these properties are not used in the construction. If \(\Sigma = \varnothing\), then the required homomorphism \(\stunit(S, \Theta; \Phi) \to \overline \stunit(R, \Delta; S, \Theta; \Phi / \alpha)\) exists by lemmas \ref{ush-new-root} and \ref{sh-new-root}. Below we use the conjugacy calculus on \(\overline \stunit(R, \Delta; S, \Theta; \Phi / \alpha)\) and the known properties of \(\widetilde X_\alpha(\mu)\) without explicit references.

Suppose that \(\Sigma\) does not intersect \(\mathbb R \alpha\). We already have the map if \(h\) is a root element. In the subcase of two-dimensional \(\langle \alpha, \Sigma \rangle\) the maps \(\up g{\{-\}_\Sigma}\) are homomorphisms by lemma \ref{conj-comm} and
\[\up{\up g{\{\widetilde X_\alpha(\mu)\}_\Sigma}}{\bigl(\up g{\{X_\beta(\nu)\}_\Sigma}\bigr)} = \up{[g, X_\alpha(\delta(\mu))]\, \up{T_\alpha(\delta(\mu))}g}{\bigl\{X_\beta\bigl(\up{T_\alpha(\mu)} \nu\bigr)\bigr\}_\Sigma} = \up g{\bigl\{X_\beta\bigl(\up{T_\alpha(\mu)}\nu\bigr)\bigr\}_\Sigma}\]
for \(\beta \in \Phi / \alpha\) and \(\Sigma = \pi_\alpha^{-1} \langle -\beta \rangle\), this is a corollary of (*). They also satisfy (Chev) by lemma \ref{conj-comm}. If the dimension of \(\langle \alpha, \Sigma \rangle\) is larger than \(2\), then we prove (Hom) by induction on \(|\Sigma|\). Take non-anti-parallel roots $\beta, \gamma \in \Phi$, then either there is an extreme root $\delta \in \Sigma \setminus \langle -\beta, -\gamma \rangle$, or $\alpha \notin \mathbb R \Sigma + \mathbb R \beta + \mathbb R \gamma$ and (Hom) follows from the conjugacy calculus with respect to $H / \alpha$. We may assume that (Hom) holds for $\Sigma \setminus \langle \delta \rangle$, then
\begin{align*}
    \up{\up{g\, X_\delta(\mu)}{\{X_\beta(\nu)\}_\Sigma}}{\bigl(\up{g\, X_\delta(\mu)}{\{X_\gamma(\lambda)\}_\Sigma}\bigr)} &= \up{\up g{\{\up{X_\delta(\mu)}{X_\beta(\nu)}\}_{\Sigma \setminus \langle \delta \rangle}}}{\bigl(\up g{\{\up{X_\delta(\mu)}{X_\gamma(\lambda)}\}_{\Sigma \setminus \langle \delta \rangle}}\bigr)}\\
    &= \prod_{\substack{i \alpha + j \beta \in \Phi\\ i \geq 0, j > 0}} \up g{\{\up{X_\delta(\mu)}{X_{i \alpha + j \beta}(f_{\alpha \beta i j}(\nu, \lambda))}\}_{\Sigma \setminus \langle \delta \rangle}}\\
    &= \prod_{\substack{i \alpha + j \beta \in \Phi\\ i \geq 0, j > 0}} \up{g\, X_\delta(\mu)}{\{X_{i \alpha + j \beta}(f_{\alpha \beta i j}(\nu, \lambda))\}_\Sigma}
\end{align*}
for all $g \in \stunit(R, \Delta; \Sigma \setminus \langle \delta \rangle)$. Here a conjugation inside braces is an abbreviation of the Chevalley commutator formula.

From now on we may assume that \(\alpha \in \Sigma\) since the case \(-\alpha \in \Sigma\) is symmetric. Let us show that there is unique \(\up{(-)}{\{=\}_\Sigma}\) by induction on the smallest face \(\Gamma\) of the cone \(\mathbb R_+ \Sigma\) containing \(\alpha\). If \(\Gamma\) is one-dimensional, then \(\alpha\) is an extreme root of \(\Sigma\). We define \(\up{(-)}{\{=\}_\Sigma}\) as
\[\up{X_\alpha(\mu) g}{\{h\}_\Sigma} = \up{T_\alpha(\mu)}{\bigl(\up g{\{h\}_{\Sigma \setminus \langle \alpha \rangle}}\bigr)}\]
for \(g \in \stunit(R, \Delta; \Sigma \setminus \langle \alpha \rangle)\), it satisfies (Hom), (Sub), and (Chev).

In order to construct \(\up{(-)}{\{\widetilde X_\beta(\lambda)\}_\Sigma}\) for \(\dim \Gamma \geq 2\) take an extreme root \(\gamma \in \Gamma \cap \Sigma\) non-anti-parallel with \(\beta\) and let
\[\up{g\, X_\gamma(\mu)}{\{\widetilde X_\beta(\lambda)\}_\Sigma} = \prod_{\substack{i \gamma + j \beta \in \Phi\\ i \geq 0, j > 0}} \up g{\{\widetilde X_{i \gamma + j \beta}(f_{\gamma \beta i j}(\mu,
\lambda))\}_{\Sigma \setminus \langle \gamma \rangle}}\]
for \(g \in \stunit(R, \Delta; \Sigma \setminus \langle \gamma \rangle)\). Clearly, this definition is independent of \(\gamma\) unless \(\Gamma\) is two-dimensional and both \(\alpha\), \(-\beta\) lie in its relative interior. In this case let \(\gamma_1\), \(\gamma_2\) be the extreme roots of \(\Sigma \cap \Gamma\). Take a decomposition \(\widetilde X_\beta(\lambda) = \prod_t \up{X_\delta(\kappa_t)}{\{\widetilde X_{\varepsilon_t}(\lambda_t)\}}\), where \(\langle \delta, \beta, \varepsilon_t \rangle\) is special, two-dimensional, and not containing in \(\mathbb R \Gamma\); \(\delta\) and \(\varepsilon_t\) are on the opposite sides of \(\beta\); \(\langle \Sigma, \delta \rangle\) is special with a face \(\Gamma\). Again using conjugation inside braces as an abbreviation for the Chevalley commutator formula, for any \(g \in \stunit(R, \Delta; \Sigma \setminus (\langle \gamma_1 \rangle \cup \langle \gamma_2 \rangle))\) we have
\begin{align*}
&\up{g\, X_{\gamma_1}(\mu)}{\bigl\{\up{X_{\gamma_2}(\nu)}{\widetilde X_\beta(\lambda)}\bigr\}_{\Sigma \setminus \langle \gamma_2 \rangle}} = \prod_t \up{g\, X_{\gamma_1}(\mu)\, \up{X_{\gamma_2}(\nu)}{X_\delta(\kappa_t)}}{\bigl\{\up{X_{\gamma_2}(\nu)}{\widetilde X_{\varepsilon_t}(\lambda_t)}\bigr\}_{\langle \Sigma, \delta \rangle \setminus \langle \gamma_2 \rangle}} \\
&= \prod_t \up{g\, \up{X_{\gamma_1}(\mu)\, X_{\gamma_2}(\nu)}{X_\delta(\kappa_t)}}{\bigl\{\up{X_{\gamma_1}(\mu)\, X_{\gamma_2}(\nu)}{\widetilde X_{\varepsilon_t}(\lambda_t)}\bigr\}_{\langle \Sigma, \delta \rangle \setminus \langle \gamma_1, \gamma_2 \rangle}} \\
&= \prod_t \up{g\, [X_{\gamma_1}(\mu), X_{\gamma_2}(\nu)]\, X_{\gamma_2}(\nu)\, \up{X_{\gamma_1}(\mu)}{X_\delta(\kappa_t)}}{\bigl\{\up{X_{\gamma_1}(\mu)}{\widetilde X_{\varepsilon_t}(\lambda_t)}\bigr\}_{\langle \Sigma, \delta \rangle \setminus \langle \gamma_1 \rangle}} \\
&= \up{g\, [X_{\gamma_1}(\mu), X_{\gamma_2}(\nu)]\, X_{\gamma_2}(\nu)}{\bigl\{\up{X_{\gamma_1}(\mu)}{\widetilde X_\beta(\lambda)}\bigr\}_{\Sigma \setminus \langle \gamma_1 \rangle}},
\end{align*}
so the maps \(\up{(-)}{\{\widetilde X_\beta(\lambda)\}_\Sigma}\) are well-defined. By construction, they satisfy (Sub).

If \(\dim \Gamma \geq 3\), then it is easy to check that \(\up{(-)}{\{=\}_\Sigma}\) satisfy (Hom) and (Chev). In the case \(\dim \Gamma = 2\) the map \(\up{(-)}{\{=\}_\Sigma}\) also satisfy (Chev) and factors through the Chevalley commutator formula for \([\widetilde X_\beta(\lambda), \widetilde X_\gamma(\mu)]\) if at least one of \(\beta\), \(\gamma\) is not in \(\mathbb R \Gamma\). Otherwise let again \(\widetilde X_\beta(\lambda) = \prod_t \up{X_\delta(\kappa_t)}{\{\widetilde X_{\varepsilon_t}(\lambda_t)\}}\), so
\begin{align*}
\up{\up g{\{\widetilde X_\gamma(\mu)\}_\Sigma}}{\bigl(\up g{\{\widetilde X_\beta(\lambda)\}_\Sigma}\bigr)} &= \prod_t \up{\up g{\{\widetilde X_\gamma(\mu)\}_\Sigma}}{\bigl(\up{g\, X_\delta(\kappa_t)}{\{\widetilde X_{\varepsilon_t}(\lambda_t)\}_{\langle \Sigma, \delta \rangle}}\bigr)} \\
&= \prod_t \up{g\, X_\delta(\kappa_t)}{\bigl\{\up{X_\delta(\kappa_t)^{-1}\, X_\gamma(\delta(\mu))\, X_\delta(\kappa_t)}{\widetilde X_{\varepsilon_t}(\lambda_t)}\bigr\}_{\langle \Sigma, \delta \rangle}} \\
&= \prod_t \up g{\bigl\{\up{X_\gamma(\delta(\mu))\, X_\delta(\kappa_t)}{\widetilde X_{\varepsilon_t}(\lambda_t)}\bigr\}_{\langle \Sigma, \delta \rangle}} = \up g{\bigl\{\up{X_\gamma(\delta(\mu))}{\widetilde X_\beta(\lambda)}\bigr\}_\Sigma}.
\end{align*}

To sum up, we have the maps \(\up{(-)}{\{=\}_\Sigma}\). We check that they satisfy (XMod) in the form
\[\up{\widetilde X_\gamma(\mu)}{\bigl(\up g{\{\widetilde X_\beta(\nu)\}_\Sigma}\bigr)} = \up{X_\gamma(\delta(\mu))\, g}{\{\widetilde X_\beta(\nu)\}_\Sigma}\]
by induction on \(\Sigma\). If there is an extreme root \(\gamma \neq \delta \in \Sigma\) non-anti-parallel to \(\beta\), then we may apply (Chev) and the induction hypothesis. If \(\Sigma = \langle \gamma \rangle\), then the identity follows from the definition and (*). Finally, let \(\Sigma = \langle \gamma, -\beta \rangle\) be two-dimensional. Then
\[\up{\widetilde X_\gamma(\mu)}{\bigl(\up g {\{\widetilde X_\beta(\nu)\}_\Sigma}\bigr)} = \up{\widetilde X_\gamma(\mu)\, \up g{\{\widetilde X_\gamma(\mu)\}_\Sigma^{-1}}}{\bigl(\up g{\bigl\{\up{X_\gamma(\delta(\mu))}{\widetilde X_\beta(\nu)}\bigr\}_\Sigma}\bigr)} = \up{X_\gamma(\delta(\mu))\, g}{\{\widetilde X_\beta(\nu)\}_\Sigma}\]
for \(g \in \stunit(R, \Delta; \Sigma \setminus \langle \gamma \rangle)\) by (Chev) and the induction hypothesis.

It remains to check (Conj) in the form
\[\up{\widetilde Z_\beta(\mu, \nu)}{\bigl(\up{F_\beta(g)}{\{F_\beta(h)\}_{\pi_\beta^{-1}(\Sigma)}}\bigr)} = \up{F_\beta(\up fg)}{\bigl\{F_\beta\bigl(\up fh\bigr)\bigr\}_{\pi_\beta^{-1}(\Sigma)}}\]
for \(f = Z_\beta(\mu, \nu) \in \unit(S, \Theta)\). This is clear for \(\beta \in \mathbb R \alpha\), so we may assume that \(\alpha\) and \(\beta\) are linearly independent. Then
\[\up{\widetilde Z_\beta(\mu, \nu)}{\bigl(\up{F_{\langle \alpha, \beta \rangle}(g')}{\{F_{\langle \alpha, \beta \rangle}(h')\}}\bigr)} = \up{F_{\langle \alpha, \beta \rangle}(\up f{g'})}{\bigl\{F_{\langle \alpha, \beta \rangle}\bigl(\up f{h'}\bigr)\bigr\}}\]
by (Conj) from the conjugacy calculus with respect to \(H / \alpha\) and any \(\up{F_\beta(g)}{\{F_\beta(h)\}}\) may be expressed in the terms \(\up{F_{\langle \alpha, \beta \rangle}(g')}{\{F_{\langle \alpha, \beta \rangle}(h')\}}\) by proposition \ref{elim-sur}.

Now we have group homomorphisms \(F_\alpha\) and \(G_\alpha\). By proposition \ref{elim-sur}, the map \(F_\alpha\) is surjective, also \(G_\alpha \circ F_\alpha\) is the identity by construction. It follows that these maps are mutually inverse.
\end{proof}

\begin{theorem} \label{pres-stu}
Let \(\delta \colon (S, \Theta) \to (R, \Delta)\) be a crossed modules of odd form rings, where \((R, \Delta)\) has a strong orthogonal hyperbolic family of rank \(\ell \geq 3\). Then \(\overline \stunit(R, \Delta; S, \Theta) \to \stunit(R, \Delta; S, \Theta)\) is an isomorphism. In particular, the relative Steinberg group has the explicit presentation from proposition \ref{identities}.
\end{theorem}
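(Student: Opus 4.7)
The plan is to construct an inverse to the natural map $\overline{\stunit}(R, \Delta; S, \Theta) \to \stunit(R, \Delta; S, \Theta)$ by upgrading $\overline{\stunit}(R, \Delta; S, \Theta; \Phi)$ to a crossed module over $\stunit(R, \Delta; \Phi)$ and then invoking the universal property of the relative Steinberg group as the quotient $\Ker(p_{2*}) / [\Ker(p_{1*}), \Ker(p_{2*})]$. Surjectivity of the forward map is immediate: the generators $Z_{ij}(a, p)$ and $Z_j(u, s)$ of $\overline{\stunit}$ map to the standard $z$-type generators of the relative Steinberg group, and every relation listed in proposition \ref{identities} is tautologically satisfied by the natural conjugacy calculus on $\stunit(R, \Delta; S, \Theta)$, so the forward map is well-defined and its image contains a generating set.

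For the crossed module structure on $\overline{\stunit}(R, \Delta; S, \Theta; \Phi)$ I would exploit theorem \ref{root-elim} (the arguments of lemmas \ref{ush-new-root} and \ref{sh-new-root} together with the associator identity of lemma \ref{associator} already carry the $F_\alpha$-isomorphism down to rank $\ell = 3$ with only minor modifications). For each root $\alpha$, the element $\widetilde X_\alpha(\mu) \in \overline{\stunit}(R, \Delta; S, \Theta; \Phi / \alpha)$ transports via $F_\alpha^{-1}$ to a canonical element $\mathcal X_\alpha(\mu) \in \overline{\stunit}(R, \Delta; S, \Theta; \Phi)$. The action of $X_\alpha(\mu) \in \stunit(R, \Delta; \Phi)$ on $\overline{\stunit}(R, \Delta; S, \Theta; \Phi)$ is then defined as inner conjugation by $\mathcal X_\alpha(\mu)$.

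The next task is to verify the Steinberg relations for the $\mathcal X_\alpha(\mu)$'s so that the prescription descends to a genuine action of $\stunit(R, \Delta; \Phi)$. Additivity in the argument $\mu$ is internal to $\overline{\stunit}(R, \Delta; S, \Theta; \Phi / \alpha)$ and is already part of the structure transported by $F_\alpha^{-1}$. Each Chevalley commutator formula $[\mathcal X_\alpha(\mu), \mathcal X_\beta(\nu)]$ for linearly independent roots $\alpha, \beta$ is extracted, via $F_{\langle \alpha, \beta \rangle \cap \Phi}^{-1}$, from a corresponding identity inside $\overline{\stunit}(R, \Delta; S, \Theta; \Phi / (\langle \alpha, \beta \rangle \cap \Phi))$, where it is guaranteed by the ambient conjugacy calculus. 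The Peiffer identity $\up{\mathcal X_\alpha(\delta(\mu))}{g} = \mathcal X_\alpha(\mu)\, g\, \mathcal X_\alpha(\mu)^{-1}$ is then a direct consequence of identity (*) established immediately after the construction of the new root subgroups in Section 6.

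The main obstacle is the coherence of the Chevalley commutator relations for the $\mathcal X_\alpha(\mu)$'s inside a single ambient group $\overline{\stunit}(R, \Delta; S, \Theta; \Phi)$: when a pair of roots $\alpha, \beta$ belongs to several distinct rank-$2$ saturated subsystems (as happens genuinely in $\mathsf{BC}_3$), the various $F_\Psi$-transports must agree, which demands careful bookkeeping with the conjugacy calculus axioms (Hom), (Sub), (Chev), and (Conj) and the compatibility $\pi_\Psi \circ \pi_{\Psi'/\Psi} = \pi_{\Psi'}$. Once coherence is checked, the crossed module structure on $\overline{\stunit}(R, \Delta; S, \Theta; \Phi)$ over $\stunit(R, \Delta; \Phi)$ is in hand, and the universal property of the relative Steinberg group supplies an inverse to the forward map, completing the argument.
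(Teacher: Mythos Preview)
Your overall architecture is the same as the paper's: surjectivity of the forward map, construction of an $\stunit(R,\Delta)$-action on $\overline{\stunit}(R,\Delta;S,\Theta)$, and then an equivariant inverse. But the way you propose to define the action has a genuine gap.

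You write that the action of $X_\alpha(\mu)\in\stunit(R,\Delta;\Phi)$ should be inner conjugation by $\mathcal X_\alpha(\mu)=F_\alpha(\widetilde X_\alpha(\mu))\in\overline{\stunit}(R,\Delta;S,\Theta;\Phi)$. The element $\widetilde X_\alpha(\mu)$ built in lemmas \ref{ush-new-root} and \ref{sh-new-root}, however, takes its parameter $\mu$ in $S_{lm}$ or $\Theta^0_m$, i.e.\ in the crossed module $(S,\Theta)$, not in $(R,\Delta)$. Since $\delta$ is typically not surjective, there is no element of $\overline{\stunit}(R,\Delta;S,\Theta)$ whose inner conjugation could realize the action of $X_\alpha(\mu)$ for a general $\mu\in R\cup\Delta$; the action you need is genuinely \emph{outer}. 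The identity~(*) you cite only says that when $\mu$ does lie in $S\cup\Theta$, conjugation by $\widetilde X_\alpha(\mu)$ agrees with the action of $T_\alpha(\delta(\mu))$---this gives the Peiffer identity, not the action itself.

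What the paper does instead is observe that for $\mu\in R\cup\Delta$ the unitary element $T_\alpha(\mu)$ lies in the diagonal group $\diag(R,\Delta;\Phi/\alpha)$, and this group acts canonically on $\overline{\stunit}(R,\Delta;S,\Theta;\Phi/\alpha)$ through its conjugacy calculus; the automorphism of $\overline{\stunit}(R,\Delta;S,\Theta;\Phi)$ is then obtained by transport along $F_\alpha$. For the Chevalley relations the paper does not use an inverse $F_{\langle\alpha,\beta\rangle}^{-1}$ (which would require $F_\Psi$ to be an isomorphism for a rank-$2$ subsystem $\Psi$, something not established); it uses only the \emph{surjectivity} of $F_{\langle\alpha,\beta\rangle}$ from proposition \ref{elim-sur}, checking the relation on the image and concluding by surjectivity. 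Your ``coherence obstacle'' about multiple rank-$2$ subsystems therefore dissolves once the action is set up correctly.
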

\begin{proof}
Notice that
\[u \colon \overline \stunit(R, \Delta; S, \Theta) \to \stunit(R, \Delta; S, \Theta)\]
is surjective. Indeed, its image contains all generators and it is invariant under the actions of all \(X_\alpha(\mu) \in \stunit(R, \Delta)\) by proposition \ref{elim-sur}.

Let us construct an action of \(\stunit(R, \Delta)\) on \(G = \overline \stunit(R, \Delta; S, \Theta)\). For any \(\alpha \in \Phi\) an element \(X_\alpha(\mu)\) gives the canonical automorphism of \(\overline \stunit(R, \Delta; S, \Theta; \Phi / \alpha)\), so by theorem \ref{root-elim} it gives an automorphism of \(G\). We have to check that these automorphisms satisfy the Steinberg relations. Clearly, \(X_\alpha(\mu \dotplus \nu)\) gives the composition of the automorphisms associated with \(X_\alpha(\mu)\) and \(X_\alpha(\nu)\). If \(\alpha\) and \(\beta\) are linearly independent roots, then the automorphisms induced by the formal products \([X_\alpha(\mu), X_\beta(\nu)]\) and \(\prod_{\substack{i \alpha + j \beta \in \Phi \\ i, j > 0}} X_{i \alpha + j \beta}(f_{\alpha \beta i j}(\mu, \nu))\) coincide on the image of \(\overline \stunit(R, \Delta; S, \Theta; \Phi / \langle \alpha, \beta \rangle)\), so it remains to apply proposition \ref{elim-sur}.

Now it is easy to construct an \(\stunit(R, \Delta)\)-equivariant homomorphism
\[v \colon \stunit(R, \Delta; S, \Theta) \to \overline \stunit(R, \Delta; S, \Theta),\, X_\alpha(\mu) \mapsto X_\alpha(\mu).\]
We already know that \(u\) is surjective and clearly \(v \circ u\) is the identity, so \(u\) is an isomorphism.
\end{proof}

\section{The case of even unitary groups} \label{even-case}

Consider matrix ``even'' unitary groups $\unit(M, B, \mathcal L)$ \cite[\S1]{Bak}. They are defined in terms of a ring $A$ with a $\lambda$-involution and a so-called \textit{form parameter} $\Lambda \leq A$, i.e. a subgroup with the properties $\inv a \Lambda a \leq \Lambda$ for all $a \in A$ and
$$\{x - \inv x \lambda \mid x \in A\} \leq \Lambda \leq \{x \in A \mid x + \inv x \lambda = 0\}.$$ For simplicity we consider only the case of central $\lambda$, so $\inv{(-)}$ is an involution on $A$. Let $M = A^{2 \ell}$ with the basis $e_{-\ell}, \ldots, e_{-1}, e_1, \ldots, e_\ell$,
$$
B(m, m') = \sum_i \lambda^{\frac{1 - \varepsilon(i)}2} \inv{m_i} m'_{-i},\enskip
\mathcal L = \{(m, x) \mid x + \sum_{i = 1}^\ell \inv{m_i} m_{-i} \in \Lambda\},
$$
From now on $\varepsilon(i) = 1$ for $i > 0$ and $\varepsilon(i) = -1$ for $i < 0$. Thus the quadratic form $q(m) = \sum_{i = 1}^\ell \inv{m_i} m_{-i} + \Lambda$ takes values in $A / \Lambda \cong \Heis(B) / \mathcal L$.
The \textit{even unitary group} is $\unit(\ell, A, \Lambda) = \unit(M, B, \mathcal L)$, it is the subgroup of $\glin(2\ell, A)$ given by the equations
$$
g'_{ij} = \lambda^{\frac{\varepsilon(j) - \varepsilon(i)}2} \inv{g_{-j, -i}}, \qquad
\sum_{j > 0} \inv{g_{ji}} g_{-j, i} \in \Lambda,
$$
where $g'_{ij}$ are the entries of the inverse matrix to $g$.

The corresponding special unital odd form ring $(R, \Delta)$ (with the same unitary group) is as follows: $R = \mat(2\ell, A)$ with the involution
$\inv{(e_{ij} a)} = e_{-j, -i} \lambda^{\frac{\varepsilon(j) - \varepsilon(i)}2} \inv a$
and $\Delta$ consists of $(r, s) \in R \times R$ such that
\begin{align*}
    \sum_k \lambda^{\frac{1 - \varepsilon(k)}2} \inv{r_{ki}} r_{-k, j} + \lambda^{\frac{1 - \varepsilon(i)}2} s_{-i, j} + \lambda^{\frac{\varepsilon(j) - 1}2} \inv{s_{-j, i}} &= 0\\
    \sum_{j > 0} \inv{r_{ji}} r_{-j, i} + \lambda^{\frac{1 - \varepsilon(i)}2} s_{-i, i} &\in \Lambda.
\end{align*}
We index rows and columns in the matrix ring \(R\) by integers from \(-\ell\) to \(\ell\) excluding \(0\). Since $(R, \Delta)$ is special and unital, its unitary group may be identified with a subgroup of $\unit(R) = \{r \in R^* \mid r^{-1} = \inv{\,r\,}\}$ under the map $g \mapsto 1 + \pi(g)$. The isomorphism $\unit(R, \Delta) \cong \unit(2\ell, A, \Lambda)$ is given by the standard action of $\unit(R)$ on $M$.

There is a canonical strong orthogonal hyperbolic family in $(R, \Delta)$, namely, $e_i = e_{ii}$ for all $i$. Then $R_{ij} = e_{ij} A$, $\Delta^0_j = \{0\} \times e_{-j, j} \lambda^{\frac{\varepsilon(j) - 1}2} \Lambda$. Following \cite[\S 3.1]{BakVavilov}, we use $A$ and $\lambda^{\frac{\varepsilon(j) - 1}2} \Lambda$ as domains for elementary transvections, i.e.
$$
T_{ij}(x) = 1 + e_{ij} x - e_{-j, -i} \lambda^{\frac{\varepsilon(j) - \varepsilon(i)}2} \inv x \text{ for } i \neq \pm j, \enskip
T_j(u) = 1 + e_{-j, j} u.
$$
In the even unitary case the root subgroups corresponding to $\mathrm e_i$ and $2 \mathrm e_i$ coincide, so we do not need the ultrashort roots. Thus a bit different notation \(T_{-j, j}(u) = T_j(u)\) is used in the paper \cite{BakVavilov}, also $\inv u = -\lambda^{-\varepsilon(j)} u$. The Steinberg relations are given in \cite[\S 3.2]{BakVavilov}, the non-trivial ones are
\begin{align*}
    T_{ij}(a) &= T_{-j, -i}(-\lambda^{\frac{\varepsilon(j) - \varepsilon(i)}2} \inv a),\\
    [T_{-i, j}(a), T_{ji}(b)] &= T_i(ab - \lambda^{\varepsilon(i)} \inv{ (ab) }),\\
    [T_{ij}(a), T_{jk}(b)] &= T_{ik}(ab),\\
    [T_i(u), T_{ij}(a)] &= T_{-i, j}(ua)\, T_j(-\lambda^{\frac{\varepsilon(j) - \varepsilon(i)}2} \inv a ua).
\end{align*}
Here all indices are non-zero and with distinct absolute values.

Recall that a \textit{form ideal} \cite[\S 4.1]{BakVavilov} $(I, \Gamma) \leqt (A, \Lambda)$ consists of an ideal $I \leqt A$ closed under the \(\lambda\)-involution and a subgroup $\Gamma \leq I$ such that $\inv a \Gamma a \leq \Gamma$ for all $a \in A$ and
$$\{x - \inv x \lambda \mid x \in I\} + \sum_{x \in I} \inv x \Lambda x \leq \Gamma \leq I \cap \Lambda.$$
The corresponding odd form ideal $(J, \Omega) \leqt (R, \Delta)$ consists of $J = \mat(2\ell, I)$ and the set $\Omega$ of the pairs $(r, s) \in \Delta \cap (J \times J)$ such that
$$
\sum_{j > 0} \inv{r_{ji}} r_{-j, i} + \lambda^{\frac{1 - \varepsilon(i)}2} s_{-i, i} \in \Gamma
$$
Actually, it is easy to see that all odd form ideals of $(R, \Delta)$ are of this form.

The relative even unitary Steinberg group is denoted by $\stunit(2\ell; A, \Lambda; I, \Gamma)$. Generalized root subgroups corresponding to the relative roots are parameterized by the following groups:
\begin{align*}
    V_{i, j \oplus k}(I, \Gamma) &= I \oplus I, &
    V_{i \oplus j}(I, \Gamma) &= \lambda^{\frac{\varepsilon(i) - 1}2} \Gamma \oplus I \oplus \lambda^{\frac{\varepsilon(j) - 1}2} \Gamma,\\
    V_{i \oplus j, k}(I, \Gamma) &= I \oplus I, &
    V_j^{\ominus i}(I, \Gamma) &= I \dotoplus \lambda^{\frac{\varepsilon(j) - 1}2} \Lambda \dotoplus I.
\end{align*}
In the last case the group operation is induced from the embedding into $\Delta$, i.e.
\begin{align*}
    (a \dotoplus u \dotoplus b) &\dotplus (c \dotoplus v \dotoplus d)\\
    &= (a + c) \dotoplus (u + \lambda^{\frac{\varepsilon(j) + \varepsilon(i)} 2} \inv{\,c\,} b - \lambda^{\frac{\varepsilon(j) - \varepsilon(i)} 2} \inv{\,b\,} c + v) \dotoplus (b + d).
\end{align*}
The corresponding elements of the type $Z$ are
\begin{align*}
    Z_{i, j \oplus k}(a \oplus b, p \oplus q) &= \up{X_{ji}(p)\, X_{ki}(q)}{(X_{ij}(a)\, X_{ik}(b))},\\
    Z_{i \oplus j, k}(a \oplus b, p \oplus q) &= \up{X_{ki}(p)\, X_{kj}(q)}{(X_{ik}(a)\, X_{jk}(b))},\\
    Z_{i \oplus j}(u \oplus a \oplus v, s \oplus p \oplus t) &= \up{X_{-i}(s)\, X_{i, -j}(p)\, X_{-j}(t)}{(X_i(u)\, X_{-i, j}(a)\, X_j(v))},\\
    Z_j^{\ominus i}(a \dotoplus u \dotoplus b, p \dotoplus s \dotoplus q) &= \up{X_{i, -j}(p)\, X_{-j}(s)\, X_{-i, -j}(q)}{(X_{-i, j}(a)\, X_j(u)\, X_{ij}(b))}.
\end{align*}

If $i$ and $j$ are indices such that $i \neq \pm j$, then there is an embedding $\glin(2, A) \to \unit(2\ell, A, \Lambda)$ given by
\begin{align*}
    g e_i &= e_i g_{11} + e_j g_{21}, &
    g e_{-i} &= e_{-i} \inv{g'_{11}} + e_{-j} \lambda^{\frac{\varepsilon(j) - \varepsilon(i)}2} \inv{g'_{12}},\\
    g e_j &= e_i g_{12} + e_j g_{22}, &
    g e_{-j} &= e_{-i} \lambda^{\frac{\varepsilon(i) - \varepsilon(j)}2} \inv{g'_{21}} + e_{-j} \inv{g'_{22}},\\
    g e_k &= e_k \text{ for } k \notin \{i, -i, j, -j\}
\end{align*}
in the standard presentation, $t_{12}(x) = 1 + e_{12} x \mapsto T_{ij}(x)$, $t_{21}(y) = 1 + e_{21} y \mapsto T_{ji}(y)$. The group $\glin(2, A)$ acts on $V_{i, j \oplus k}(I, \Gamma)$ by the embedding into the unitary group via the indices $(j, k)$,
$$
\up g{(a \oplus b)} = (a g'_{11} + b g'_{21}) \oplus (a g'_{12} + b g'_{22}).
$$
Similarly, $\glin(2, A)$ acts on $V_{i \oplus j, k}(I, \Gamma)$ and $V_{i \oplus j}(I, \Gamma)$ by embeddings to $\unit(2\ell, A, \Lambda)$ via $(i, j)$, i.e.
\begin{align*}
    \up g{(a \oplus b)} &= (g_{11} a + g_{12} b) \oplus (g_{21} a + g_{22} b),\\
    \up g{(u \oplus a \oplus v)} &= (\inv{g'_{11}} u g'_{11} + \inv{g'_{11}} a g'_{21} - \lambda^{\varepsilon(i)} \inv{g'_{21}} \inv a g'_{11} + \lambda^{\frac{\varepsilon(i) - \varepsilon(j)}2} \inv{g'_{21}} v g'_{21})\\
    &\oplus (\inv{g'_{11}} u g'_{12} + \inv{g'_{11}} a g'_{22} - \lambda^{\varepsilon(i)}  \inv{g'_{21}} \inv a g'_{12} + \lambda^{\frac{\varepsilon(i) - \varepsilon(j)}2} \inv{g'_{21}} v g'_{22})\\
    &\oplus (\lambda^{\frac{\varepsilon(j) - \varepsilon(i)}2} \inv{g'_{12}} (u g'_{12} + a g'_{22}) - \lambda^{\frac{\varepsilon(i) + \varepsilon(j)}2} \inv{g'_{22}} \inv a g'_{12} + \inv{g'_{22}} v g'_{22}).
\end{align*}
Finally, for every $i$ the group $\unit(2, A, \Lambda)$ embeds to $\unit(2\ell, A, \Lambda)$ via
\begin{align*}
    g e_{-i} &= e_{-i} g_{-1, -1} + e_i \lambda^{\frac{1 - \varepsilon(i)}2} g_{1, -1},\\
    g e_i &= e_{-i} \lambda^{\frac{\varepsilon(i) - 1}2} g_{-1, 1} + e_i g_{11},\\
    g e_j &= e_j \text{ for } j \neq \pm i
\end{align*}
in the standard presentation, $T_1(u) \mapsto T_i( \lambda^{\frac{\varepsilon(i) - 1}2} u )$, $T_{-1}(v) \mapsto T_{-i}( \lambda^{\frac{1 - \varepsilon(i)}2} v)$. The group $\unit(2, A, \Lambda)$ acts on $V_j^{\ominus i}(I, \Gamma)$ via this embedding, namely,
\begin{align*}
    \up g{(a \dotoplus u \dotoplus b)} &= (g_{-1, -1} a + \lambda^{\frac{\varepsilon(i) - 1}2} g_{-1, 1} b)\\ &\dotoplus (u + \lambda^{\frac{\varepsilon(j) - 1}2} (\inv a v a - c + \lambda \inv{\,c\,} + \inv{\,b\,} w b))\\
    &\dotoplus (\lambda^{\frac{1 - \varepsilon(i)}2} g_{1, -1} a + g_{11} b),
\end{align*}
where $v = \lambda \inv{g_{-1, -1}} g_{1, -1} \in \Lambda$, $w = \lambda \inv{g_{-1, 1}} g_{11} \in \Lambda$, and $c = \lambda^{\frac{\varepsilon(i) - 1}2} \inv a \inv{g_{1, -1}} g_{-1, 1} b$.

\begin{theorem} \label{pres-even-stu}
    Let $A$ be a ring with a $\lambda$-involution, where $\lambda$ is central, and a form parameter $\Lambda$, $(I, \Gamma) \leqt (A, \Lambda)$ be a form ideal. For any $\ell \geq 3$ the group $\stunit(2\ell; A, \Lambda; I, \Gamma)$ is generated by $Z_{ij}(a, p)$, $Z_j(u, s)$, $Z_{i, j \oplus k}(A, P)$, $Z_{i \oplus j, k}(A, P)$, $Z_{i \oplus j}(U, S)$, $Z^{\ominus i}_j(A, P)$, where all indices have distinct absolute values and the arguments are taken from $I$, $\lambda^{\frac{\varepsilon(j) - 1}2} \Gamma$, and the groups of the type \(V\) defined above. The only relations between these generators are the following:
    \begin{itemize}
        \item[(Sym)] \begin{enumerate}
            \item $Z_{ij}(a, p) = Z_{-j, -i}(-\mu \inv a, -\mu^{-1} \inv p)$, where $\mu = \lambda^{\frac{\varepsilon(j) - \varepsilon(i)}2}$,
            \item $Z_{i \oplus j, k}(a \oplus b, p \oplus q) = Z_{-k, (-i) \oplus (-j)}(-(\mu a \oplus \nu b), -(\mu^{-1} p \oplus \nu^{-1} q)) = Z_{j \oplus i, k}(b \oplus a, q \oplus p)$, where $\mu = \lambda^{\frac{\varepsilon(k) - \varepsilon(i)}2}$ and $\nu = \lambda^{\frac{\varepsilon(k) - \varepsilon(j)}2}$,
            \item $Z_{i \oplus j}(u \oplus a \oplus v, s \oplus p \oplus t) = Z_{j \oplus i}(v \oplus (-\mu \inv a) \oplus u, t \oplus (-\mu^{-1} \inv p) \oplus s)$, where $\mu = \lambda^{\frac{\varepsilon(i) + \varepsilon(j)}2}$,
            \item $Z_j^{\ominus i}(a \dotoplus u \dotoplus b, p \dotoplus s \dotoplus q) = Z_j^{\ominus(-i)}(b \dotoplus (u - x + \lambda^{\varepsilon(j)} \inv x) \dotoplus a, q \dotoplus (s - y + \lambda^{-\varepsilon(j)} \inv y) \dotoplus p)$, where $x = \lambda^{\frac{\varepsilon(i) + \varepsilon(j)}2} \inv a b$ and $y = \lambda^{\frac{-\varepsilon(i) - \varepsilon(j)}2} \inv p q$;
        \end{enumerate}
        \item[(Add)] all generators are homomorphisms on the first argument;
        \item[(Comm)] \begin{enumerate}
            \item $[Z_{ij}(a, p), Z_{kl}(b, q)] = 1$ for distinct $\pm i$, $\pm j$, $\pm k$, $\pm l$,
            \item $[Z_{ij}(a, p), Z_l(u, s)] = 1$ for distinct $\pm i$, $\pm j$, $\pm l$,
            \item $\up{Z_{ij}(a, p)}{Z_{i \oplus j, k}(B, Q)} = Z_{i \oplus j, k}(\up{Z_{12}(a, p)} B, \up{Z_{12}(a, p)} Q)$,
            \item $\up{Z_{ij}(a, p)}{Z_{i \oplus j}(U, S)} = Z_{i \oplus j}(\up{Z_{12}(a, p)}U, \up{Z_{21}(-\mu \inv a, -\mu^{-1} \inv p)} S)$, where $\mu = \lambda^{\frac{\varepsilon(j) - \varepsilon(i)}2}$,
            \item $\up{Z_i(u, s)}{Z^{\ominus i}_j(V, T)} = Z^{\ominus i}_j(\up{Z_1(\mu u, \mu^{-1} s)}V, \up{Z_{-1}(\nu u, \nu^{-1} s)}T)$, where $\mu = \lambda^{\frac{1 - \varepsilon(i)}2}$ and $\nu = \lambda^{\frac{1 + \varepsilon(i)}2}$;
        \end{enumerate}
        \item[(Simp)] \begin{enumerate}
            \item $Z_{ik}(a, p) = Z_{i \oplus j, k}(a \oplus 0, p \oplus 0)$,
            \item $Z_{ij}(a, p) = Z_{(-i) \oplus j}(0 \oplus a \oplus 0, 0 \oplus (- \lambda^{\frac{\varepsilon(i) - \varepsilon(j)}2} p) \oplus 0)$,
            \item $Z_j(u, s) = Z^{\ominus i}_j(0 \dotoplus u \dotoplus 0, 0 \dotoplus s \dotoplus 0)$;
        \end{enumerate}
        \item[(HW)] \begin{enumerate}
            \item $Z_{j \oplus k, i}(\up{T_{12}(r)}{(0 \oplus a)}, p \oplus q) = Z_{k, i \oplus j}(\up{T_{12}(p)}{(a \oplus 0)}, \up{T_{12}(p)}{(q \oplus r)})$,
            \item $Z_{j \oplus i}(\up{T_{12}(- \lambda^{\frac{\varepsilon(i) - \varepsilon(j)}2} \inv q)}{(u \oplus (-\lambda^{\frac{\varepsilon(i) + \varepsilon(j)}2} \inv a) \oplus 0)}, t \oplus (-\lambda^{\frac{-\varepsilon(i) - \varepsilon(j)}2} \inv p) \oplus s)\\ = Z^{\ominus i}_j(\up{T_{-1}(\lambda^{\frac{\varepsilon(i) - 1}2} s)}{(a \dotoplus u \dotoplus 0)}, \up{T_1(\lambda^{\frac{1 + \varepsilon(i)}2} s)}{(p \dotoplus t \dotoplus q)})$;
        \end{enumerate}
        \item[(Delta)] $\up{Z_{ji}(b, 0)}{Z_{ij}(a, p)} = Z_{ij}(a, p + b)$, $\up{Z_{-j}(v, 0)}{Z_j(u, s)} = Z_j(u, s + v)$.
    \end{itemize}
\end{theorem}
\begin{proof}
    This is a direct corollary of theorem \ref{pres-stu}.
\end{proof}

In the case of symplectic groups the ring $A$ is commutative, $\lambda = -1$, $\inv a = a$, $\Lambda = A$. Also, $\lambda^{\frac{\varepsilon(i) \pm 1}2} = \mp \varepsilon(i)$, $\lambda^{\frac{\varepsilon(i) \pm \varepsilon(j)}2} = \mp \varepsilon(i) \varepsilon(j)$. The actions of $\mathrm{SL}(2, A) = \mathrm{Sp}(2, A)$ on $V_{i \oplus j}(I, \Gamma)$ and $V^{\ominus i}_j(I, \Gamma)$ may be written in the simpler forms
\begin{align*}
    \up g{(u \oplus a \oplus v)} &= (g_{22}^2 u - 2 g_{21} g_{22} a + \varepsilon(i) \varepsilon(j) g_{21}^2 v)\\
    &\oplus (-g_{12} g_{22} u + (g_{11} g_{22} + g_{12} g_{21}) a - \varepsilon(i) \varepsilon(j) g_{11} g_{21} v)\\
    &\oplus (\varepsilon(i) \varepsilon(j) g_{12}^2 u - 2 \varepsilon(i) \varepsilon(j) g_{11} g_{12} a + g_{11}^2 v).\\
    \up g{(a \dotoplus u \dotoplus b)} &= (g_{11} a + \varepsilon(i) g_{12} b)\\ &\dotoplus (u - \varepsilon(j) (g_{11} g_{21} a^2 + 2 \varepsilon(i) g_{12} g_{21} a b + g_{12} g_{22} b^2))\\
    &\dotoplus (\varepsilon(i) g_{21} a + g_{22} b).
\end{align*}

In the case of even orthogonal groups (considered also in \cite[theorem 2]{RelStLin}) $A$ is commutative, $\lambda = 1$, $\inv a = a$, $\Lambda = 0$. Thus $V_{i \oplus j}(I) = I$, $V^{\ominus i}_j(I) = I \oplus I$ and the root elements with the long roots are trivial. The action of $\mathrm{SL}(2, A)$ on $V_{i \oplus j}(I)$ is also trivial. The generators $Z_i(u, s)$, $Z_{i \oplus j}(a, p)$ (coinciding with $Z_{-i, j}(a, -p)$), and $Z_j^{\ominus i}(a \oplus b, p \oplus q)$ (coinciding with $Z_{-i, j}(a, -p)\, Z_{ij}(b, -q)$) are redundant. The relations between the remaining generators take the form
\begin{itemize}
    \item[(Sym)] \begin{enumerate}
        \item $Z_{ij}(a, p) = Z_{-j, -i}(-a, -p)$,
        \item $Z_{i \oplus j, k}(a \oplus b, p \oplus q) = Z_{-k, (-i) \oplus (-j)}(-(a \oplus b), -(p \oplus q)) = Z_{j \oplus i, k}(b \oplus a, q \oplus p)$;
    \end{enumerate}
    \item[(Add)] the generators are homomorphisms on the first argument;
    \item[(Comm)] \begin{enumerate}
        \item $[Z_{ij}(a, p), Z_{kl}(b, q)] = 1$ for distinct $\pm i$, $\pm j$, $\pm k$, $\pm l$,
        \item $[Z_{ij}(a, p), Z_{-i, j}(b, q)] = 1$,
        \item $\up{Z_{ij}(a, p)}{Z_{i \oplus j, k}(B, Q)} = Z_{i \oplus j, k}(\up{Z_{12}(a, p)} B, \up{Z_{12}(a, p)} Q)$;
    \end{enumerate}
    \item[(Simp)] $Z_{ik}(a, p) = Z_{i \oplus j, k}(a \oplus 0, p \oplus 0)$
    \item[(HW)] $Z_{j \oplus k, i}(\up{T_{12}(r)}{(0 \oplus a)}, p \oplus q) = Z_{k, i \oplus j}(\up{T_{12}(p)}{(a \oplus 0)}, \up{T_{12}(p)}{(q \oplus r)})$;
    \item[(Delta)] $\up{Z_{ji}(b, 0)}{Z_{ij}(a, p)} = Z_{ij}(a, p + b)$.
\end{itemize}

\section{Doubly laced Steinberg groups} \label{pairs-type}

In this and the next section \(\Phi\) is one of the root systems \(\mathsf B_\ell\), \(\mathsf C_\ell\), \(\mathsf F_4\). In order to define relative Steinberg groups of type \(\Phi\) over commutative rings with respect to Abe's admissible pairs, it is useful to consider Steinberg groups of type \(\Phi\) over pairs \((K, L)\), where \(K\) parametrizes the short root elements and \(L\) parametrizes the long root ones.

Let us say that \((K, L)\) is a \textit{pair of type} \(\mathsf B\) if
\begin{itemize}
\item \(L\) is a unital commutative ring, \(K\) is an \(L\)-module;
\item there is a classical quadratic form \(s \colon K \to L\), i.e. \(s(kl) = s(k) l^2\) and the expression \(s(k \mid k') = s(k + k') - s(k) - s(k')\) is \(L\)-bilinear.
\end{itemize}

Next, \((K, L)\) is a \textit{pair of type} \(\mathsf C\) if
\begin{itemize}
\item \(K\) is a unital commutative ring;
\item \(L\) is an abelian group;
\item there are additive maps \(d \colon K \to L\) and \(u \colon L \to K\);
\item there is a map \(L \times K \to L,\, (l, k) \mapsto l \cdot k\);
\item \((l + l') \cdot k = l \cdot k + l' \cdot k\), \(l \cdot (k + k') = l \cdot k + d(kk' u(l)) + l \cdot k'\);
\item \(u(d(k)) = 2k\), \(u(l \cdot k) = u(l) k^2\), \(d(u(l)) = 2l\), \(d(k) \cdot k' = d(k{k'}^2)\);
\item \(l \cdot 1 = l\), \((l \cdot k) \cdot k' = l \cdot kk'\).
\end{itemize}

Finally, \((K, L)\) is a \textit{pair of type} \(\mathsf F\) if
\begin{itemize}
\item \(K\) and \(L\) are unital commutative rings;
\item there is a unital ring homomorphism \(u \colon L \to K\);
\item there are maps \(d \colon K \to L\) and \(s \colon K \to L\);
\item \(d(k + k') = d(k) + d(k')\), \(d(u(l)) = 2l\), \(u(d(k)) = 2k\), \(d(k u(l)) = d(k) l\);
\item \(s(k + k') = s(k) + d(kk') + s(k')\), \(s(kk') = s(k) s(k')\), \(s(u(l)) = l^2\), \(u(s(k)) = k^2\).
\end{itemize}

If \((K, L)\) is a pair of type \(\mathsf C\) or \(\mathsf F\), we have an action \(K \times L \to K,\, (k, l) \mapsto kl = k u(l)\) and a biadditive map \(K \times K \to L, (k, k') \mapsto s(k \mid k') = d(kk')\). If \((K, L)\) is a pair of type \(\mathsf B\) or \(\mathsf F\), then there is a map \(L \times K \to L,\, (l, k) \mapsto l \cdot k = l s(k)\). With respect to these additional operations any pair \((K, L)\) of type \(\mathsf F\) is also a pair of both types \(\mathsf B\) and \(\mathsf C\). For any unital commutative ring \(K\) the pair \((K, K)\) with \(u(k) = k\), \(d(k) = 2k\), \(s(k) = k^2\) is of type \(\mathsf F\).

The \textit{Steinberg group} of type \(\Phi\) over a pair \((K, L)\) of the corresponding type is the abstract group \(\stlin(\Phi; K, L)\) with the generators \(x_\alpha(k)\) for short \(\alpha \in \Phi\), \(k \in K\); \(x_\beta(l)\) for long \(\beta \in \Phi\), \(l \in L\); and the relations
\begin{align*}
x_\alpha(p)\, x_\alpha(q) &= x_\alpha(p + q); \\
[x_\alpha(p), x_\beta(q)] &= 1 \text{ if } \alpha + \beta \notin \Phi \cup \{0\}; \\
[x_\alpha(p), x_\beta(q)] &= x_{\alpha + \beta}(N_{\alpha \beta} pq) \text{ if } \alpha + \beta \in \Phi \text{ and } |\alpha| = |\beta| = |\alpha + \beta|; \\
[x_\alpha(p), x_\beta(q)] &= \textstyle x_{\alpha + \beta}\bigl(\frac{N_{\alpha \beta}}2 s(p \mid q)\bigr) \text{ if } \alpha + \beta \in \Phi \text{ and } |\alpha| = |\beta| < |\alpha + \beta|; \\
[x_\alpha(p), x_\beta(q)] &= x_{\alpha + \beta}(N_{\alpha \beta} pq)\, x_{2\alpha + \beta}(N_{\alpha \beta}^{21} q \cdot p) \text{ if } \alpha + \beta, 2\alpha + \beta \in \Phi;\\
[x_\alpha(p), x_\beta(q)] &= x_{\alpha + \beta}(N_{\alpha \beta} qp)\, x_{\alpha + 2\beta}(N_{\alpha \beta}^{12} p \cdot q) \text{ if } \alpha + \beta, \alpha + 2\beta \in \Phi.
\end{align*}
Here \(N_{\alpha \beta}\), \(N_{\alpha \beta}^{21}\), \(N_{\alpha \beta}^{12}\) are the ordinary integer structure constants. In the case of the pair \((K, K)\) there is a canonical homomorphism
\[\stmap \colon \stlin(\Phi, K) = \stlin(\Phi; K, K) \to \group^{\mathrm{sc}}(\Phi, K)\]
to the simply connected Chevalley group over \(K\) of type \(\Phi\).

In order to apply the results on odd unitary groups, we need a construction of odd form rings by pairs of type \(\mathsf B\) and \(\mathsf C\). If \((K, L)\) is a pair of type \(\mathsf B\) and \(\ell \geq 0\), then we consider the special odd form ring \((R, \Delta) = \ofaorth(2\ell + 1; K, L)\) with
\begin{itemize}
\item \(R = (K \otimes_L K) e_{00} \oplus \bigoplus_{1 \leq |i| \leq \ell} (K e_{i0} \oplus K e_{0i}) \oplus \bigoplus_{1 \leq |i|, |j| \leq \ell} L e_{ij}\);
\item \(\inv{x e_{ij}} = x e_{-j, -i}\) for \(i \neq 0\) or \(j \neq 0\), \(\inv{(x \otimes y) e_{00}} = (y \otimes x) e_{00}\);
\item \((x e_{ij}) (y e_{kl}) = 0\) for \(j \neq k\);
\item \((x e_{ij}) (y e_{jk}) = xy e_{ik}\) for \(j \neq 0\) if at least one of \(i\) and \(k\) is non-zero;
\item \((x e_{0j}) (y e_{j0}) = (x \otimes y) e_{00}\) for \(j \neq 0\);
\item \((x e_{i0}) (y e_{0j}) = s(x \mid y) e_{ij}\) for \(i, j \neq 0\);
\item \((x e_{i0}) ((y \otimes z) e_{00}) = s(x \mid y) z e_{i0}\) for \(i \neq 0\);
\item \(((x \otimes y) e_{00}) ((z \otimes w) e_{00}) = (x \otimes s(y \mid z) w) e_{00}\);
\item \(\Delta\) is the subgroup of \(\Heis(R)\) generated by \(\phi(S)\), \(((x \otimes y) e_{00}, -(s(x) y \otimes y) e_{00})\), \((x e_{0i}, -s(x) e_{-i, i})\), \((x e_{i0}, 0)\), and \((x e_{ij}, 0)\) for \(i, j \neq 0\).
\end{itemize}
Clearly, \((R, \Delta)\) had a strong orthogonal hyperbolic family of rank \(\ell\) and the corresponding unitary Steinberg group is naturally isomorphic to \(\stlin(\mathsf B_\ell; K, L)\). The Steinberg relations in \(\stlin(\mathsf B_\ell; K, L)\) and \(\stunit(R, \Delta)\) are the same since \(\ofaorth(2\ell + 1, K, K)\) has the unitary group \(\sorth(2\ell + 1, K) \times (\mathbb Z / 2 \mathbb Z)(K)\) by \cite{ClassicOFA} for every unital commutative ring \(K\). Of course, \(\stlin(\mathsf B_\ell; K, L)\) may also be constructed by the module \(L^\ell \oplus K \oplus L^\ell\) with the quadratic form \(q(x_{-\ell}, \ldots, x_{-1}, k, x_1, \ldots, x_\ell) = \sum_{i = 1}^\ell x_{-i} x_i + s(k)\), but the corresponding odd form rings and unitary groups are not functorial on \((K, L)\).

If \((K, L)\) is a pair of the type \(\mathsf C\), then let \((R, \Delta) = \ofasymp(2\ell; K, L)\), where
\begin{itemize}
\item \(R = \bigoplus_{1 \leq |i|, |j| \leq \ell} K e_{ij}\);
\item \(\inv{x e_{ij}} = \varepsilon_i \varepsilon_j x e_{-j, -i}\), \((x e_{ij}) (y e_{kl}) = 0\) for \(j \neq k\), \((x e_{ij}) (y e_{jl}) = xy e_{il}\), where \(\varepsilon_i = 1\) for \(i > 0\) and \(\varepsilon_i = -1\) for \(i < 0\);
\item \(\Delta = \bigoplus^\cdot_{1 \leq |i|, |j| \leq \ell; i + j > 0} \phi(K e_{ij}) \dotoplus \bigoplus_{1 \leq |i| \leq \ell}^\cdot L v_i \dotoplus \bigoplus_{1 \leq |i|, |j| \leq \ell}^\cdot q_i \cdot K e_{ij}\);
\item \(x v_i \dotplus y v_i = (x + y) v_i\), \(q_i \cdot x e_{ij} \dotplus q_i \cdot y e_{ij} = q_i \cdot (x + y) e_{ij}\);
\item \(\phi(x e_{-i, i}) = d(x) v_i\), \(\pi(x v_i) = 0\), \(\rho(x v_i) = u(x) e_{-i, i}\);
\item \((x v_i) \cdot (y e_{jk}) = \dot 0\) for \(i \neq j\), \((x v_i) \cdot (y e_{ik}) = \varepsilon_i \varepsilon_k (x \cdot y) v_k\);
\item \(\pi(q_i \cdot x e_{ij}) = x e_{ij}\), \(\rho(q_i \cdot x e_{ij}) = 0\);
\item \((q_i \cdot x e_{ij}) \cdot (y e_{kl}) = \dot 0\) for \(j \neq k\), \((q_i \cdot x e_{ij}) \cdot (y e_{jk}) = q_i \cdot xy e_{ik}\);
\end{itemize}
Again, \((R, \Delta)\) had a strong orthogonal hyperbolic family of rank \(\ell\) and its unitary Steinberg group is naturally isomorphic to \(\stlin(\mathsf C_\ell; K, L)\). The Steinberg relations in these two Steinberg groups coincide since \(\ofasymp(2\ell; K, K)\) is the odd form ring constructed by the split symplectic module over a unital commutative ring \(K\), so its unitary group is \(\symp(2\ell, K)\).

We do not construct an analogue of \(\mathrm G(\mathsf F_4, K)\) for pairs of type \(\mathsf F\) and do not prove that the product map
\[\prod_{\text{short } \alpha \in \Pi} K \times \prod_{\text{long } \beta \in \Pi} L \to \stlin(\mathsf F_4; K, L), (x_\alpha)_{\alpha \in \Pi} \mapsto \prod_{\alpha \in \Pi} X_\alpha(p_\alpha)\]
is injective for a system of positive roots \(\Pi \subseteq \Phi\). Such claims are not required in the proof of our main result.

\section{Relative doubly laced Steinberg groups}

In the simply-laced case \cite{RelStLin} relative Steinberg groups are parameterized by the root system and a \textit{crossed module of commutative rings} \(\delta \colon \mathfrak a \to K\), where \(K\) is a unital commutative ring, \(\mathfrak a\) is a \(K\)-module, \(\delta\) is a homomorphism of \(K\)-modules, and \(a \delta(a') = \delta(a) a'\) for all \(a, a' \in \mathfrak a\). In the doubly-laced case we may construct semi-abelian categories of pairs of all three types by omitting the unitality conditions in the definitions, but in this approach we have to add the condition that the action in the definition of a crossed module is unital.

Instead we say that \((\mathfrak a, \mathfrak b)\) is a \textit{precrossed module} over a pair \((K, L)\) of a given type if there is a reflexive graph \(((K, L), (K', L'), p_1, p_2, d)\) in the category of pairs of a given type, where \((\mathfrak a, \mathfrak b) = \Ker(p_2)\). This may be written as explicit family of operations between the sets \(\mathfrak a\), \(\mathfrak b\), \(K\), \(L\) satisfying certain axioms, in particular, \(\delta \colon (\mathfrak a, \mathfrak b) \to (K, L)\) is a pair of homomorphisms of abelian groups induced by \(p_1\). A precrossed module \(\delta \colon (\mathfrak a, \mathfrak b) \to (K, L)\) is called a \textit{crossed module} if the corresponding reflexive graph has a structure of internal category (necessarily unique), this may be described as additional axioms on the operations (an analogue of Peiffer identities). It is easy to see that crossed submodules of \(\id \colon (K, L) \to (K, L)\) are precisely Abe's admissible pairs.

For a crossed module \(\delta \colon (\mathfrak a, \mathfrak b) \to (K, L)\) of pairs of a given type the \textit{relative Steinberg group} is
\[\stlin(\Phi; K, L; \mathfrak a, \mathfrak b) = \Ker(p_{2*}) / [\Ker(p_{1*}), \Ker(p_{2*})],\]
where \(p_{i*} \colon \stlin(\Phi; \mathfrak a \rtimes K, \mathfrak b \rtimes L) \to \stlin(\Phi; K, L)\) are the induced homomorphisms. As in the odd unitary case and the simply laced case, this is the crossed module over \(\stlin(\Phi; K, L)\) with the generators \(x_\alpha(a)\) for short \(\alpha \in \Phi\), \(a \in \mathfrak a\) and \(x_\beta(b)\) for long \(\beta \in \Phi\), \(b \in \mathfrak b\) satisfying the Steinberg relations, \(\delta(x_\alpha(a)) = x_\alpha(\delta(a))\) for any root \(\alpha\) and \(a \in \mathfrak a \cup \mathfrak b\), and
\[\up{x_\alpha(p)}{x_\beta(a)} = \prod_{\substack{i \alpha + j \beta \in \Phi \\ i \geq 0, j > 0}} x_{i \alpha + j \beta}(f_{\alpha \beta i j}(p, a))\]
for \(\alpha \neq -\beta\), \(p \in K \cup L\), \(a \in \mathfrak a \cup \mathfrak b\), and appropriate expressions \(f_{\alpha \beta i j}\).

If \(\delta \colon (\mathfrak a, \mathfrak b) \to (K, L)\) is a crossed module of pairs of type \(\mathsf C\) and \(\ell \geq 0\), then \(\delta \colon \ofasymp(\ell; \mathfrak a, \mathfrak b) \to \ofasymp(\ell; K, L)\) is a crossed module of odd form rings, where
\[\ofasymp(\ell; \mathfrak a, \mathfrak b) = \Ker\bigl(p_2 \colon \ofasymp(\ell; \mathfrak a \rtimes K, \mathfrak b \rtimes L) \to \ofasymp(\ell; K, L)\bigr).\]
Clearly,
\[\ofasymp(\ell; \mathfrak a, \mathfrak b) = \Bigl( \bigoplus_{1 \leq |i|, |j| \leq \ell} \mathfrak a e_{ij},
\bigoplus^\cdot_{\substack{1 \leq |i|, |j| \leq \ell \\ i + j > 0}} \phi(\mathfrak a e_{ij}) \dotoplus \bigoplus^\cdot_{1 \leq |i| \leq \ell} \mathfrak b v_i \dotoplus \bigoplus^\cdot_{1 \leq |i|, |j| \leq \ell} q_i \cdot \mathfrak a e_{ij} \Bigr),\]
so we may apply theorem \ref{pres-stu} for Chevalley groups of type \(\mathsf C_\ell\).

For pairs of type \(\mathsf B\) the construction \(\ofaorth(2\ell + 1; -, =)\) does not preserve fiber products, so we have to modify it a bit. Take a crossed module \(\delta \colon (\mathfrak a, \mathfrak b) \to (K, L)\) of pairs of type \(\mathsf B\) and consider the odd form rings \((T, \Xi) = \ofaorth(2\ell + 1; \mathfrak a \rtimes K, \mathfrak b \rtimes L)\), \((R, \Delta) = \ofaorth(2\ell + 1; K, L)\) forming a reflexive graph. Let \((\widetilde T, \widetilde \Xi)\) be the special odd form factor ring of \((T, \Xi)\) by the odd form ideal \((I, \Gamma)\), where \(I \leq T\) is the subgroup generated by \((a \otimes a' - a \otimes \delta(a')) e_{00}\) and \((a \otimes a' - \delta(a) \otimes a') e_{00}\) for \(a, a' \in \mathfrak a\). It is easy to check that \(I\) is an involution invariant ideal of \(T\), so \((\widetilde T, \widetilde \Xi)\) is well-defined, and the homomorphisms \(p_i \colon (T, \Xi) \to (R, \Delta)\) factor through \((\widetilde T, \widetilde \Xi)\). Moreover, the precrossed module \((S, \Theta) = \Ker\bigl(p_2 \colon (\widetilde T, \widetilde \Xi) \to (R, \Delta)\bigr)\) over \((R, \Delta)\) satisfies the Peiffer relations and
\[S = X e_{00} \oplus \bigoplus_{1 \leq |i| \leq \ell} (\mathfrak a e_{i0} \oplus \mathfrak a e_{0i}) \oplus \bigoplus_{1 \leq |i|, |j| \leq \ell} \mathfrak b e_{ij}\]
for some group \(X\), so we may also apply theorem \ref{pres-stu} for Chevalley groups of type \(\mathsf B_\ell\).

If \(\alpha\), \(\beta\), \(\alpha - \beta\) are short roots, then \(V_{\alpha \beta}(\mathfrak a, \mathfrak b)\) denotes the abelian group \(\mathfrak a \mathrm e_\alpha \oplus \mathfrak a \mathrm e_\beta\). The groups \(X_{\alpha - \beta}(K)\) and \(X_{\beta - \alpha}(K)\) naturally act on \(V_{\alpha \beta}(\mathfrak a, \mathfrak b)\) in such a way that the homomorphism
\[V_{\alpha \beta}(\mathfrak a, \mathfrak b) \to \stlin(\Phi; \mathfrak a, \mathfrak b), x \mathrm e_\alpha \oplus y \mathrm e_\beta \mapsto X_\alpha(x)\, X_\beta(y)\]
is equivariant (in the case of \(\mathsf F_4\) we consider \(X_{\pm(\alpha - \beta)}(K)\) as abstract groups, not as their images in the Steinberg group). Similarly, if \(\alpha\), \(\beta\), \(\alpha - \beta\) are long roots, then \(V_{\alpha \beta}(\mathfrak a, \mathfrak b) = \mathfrak b \mathrm e_\alpha \oplus \mathfrak b \mathrm e_\beta\) is a representation of \(X_{\alpha - \beta}(L)\) and \(X_{\beta - \alpha}(L)\). If \(\alpha\) and \(\beta\) are long and \((\alpha + \beta)/2\) is short, then \(V_{\alpha \beta}(\mathfrak a, \mathfrak b) = \mathfrak b \mathrm e_\alpha \oplus \mathfrak a \mathrm e_{(\alpha + \beta)/2} \oplus \mathfrak b \mathrm e_\beta\) is a representation of \(X_{(\alpha - \beta)/2}(K)\) and \(X_{(\beta - \alpha)/2}(K)\). Finally, if \(\alpha\) and \(\beta\) are short and \(\alpha + \beta\) is long, then \(V_{\alpha \beta}(\mathfrak a, \mathfrak b) = \mathfrak a \mathrm e_\alpha \dotoplus \mathfrak b \mathrm e_{\alpha + \beta} \dotoplus \mathfrak a \mathrm e_\alpha\) is a \(2\)-step nilpotent group with the group operation
\[\textstyle (x \mathrm e_\alpha \dotoplus y \mathrm e_{\alpha + \beta} \dotoplus z \mathrm e_\beta) \dotplus (x' \mathrm e_\alpha \dotoplus y' \mathrm e_{\alpha + \beta} \dotoplus z' \mathrm e_\beta) = (x + x') \mathrm e_\alpha \dotoplus \bigl(y - \frac{N_{\alpha \beta}}2 s(z \mid x') + z'\bigr) \mathrm e_{\alpha + \beta} \dotoplus (z + z') \mathrm e_\beta\]
and the action of \(X_{\alpha - \beta}(L)\) and \(X_{\beta - \alpha}(L)\) such that the homomorphism
\[V_{\alpha \beta}(\mathfrak a, \mathfrak b) \to \stlin(\Phi; \mathfrak a, \mathfrak b), x \mathrm e_\alpha \dotoplus y \mathrm e_{\alpha + \beta} \dotoplus z \mathrm e_\beta \mapsto X_\alpha(x)\, X_{\alpha + \beta}(y)\, X_\beta(z)\]
is equivariant.

We are ready to construct a presentation of \(G = \stlin(\Phi; K, L; \mathfrak a, \mathfrak b)\). Let \(Z_\alpha(x, p) = \up{X_{-\alpha}(p)}{X_\alpha(x)} \in G\) for \(x \in \mathfrak a\), \(p \in K\) if \(\alpha\) is short and \(x \in \mathfrak b\), \(p \in L\) if \(\alpha\) is long. If \(V_{\alpha \beta}(K, L)\) is defined, then there are also natural elements \(Z_{\alpha \beta}(u, s) \in G\) for \(u \in V_{\alpha \beta}(\mathfrak a, \mathfrak b)\) and \(s \in V_{-\alpha, -\beta}(K, L)\)

\begin{theorem}\label{pres-stphi}
Let \(\Phi\) be one of the root systems \(\mathsf B_\ell\), \(\mathsf C_\ell\), or \(\mathsf F_4\) for \(\ell \geq 3\); \((K, L)\) be a pair of the corresponding type; \((\mathfrak a, \mathfrak b)\) be a crossed module over \((K, L)\). Then \(\stlin(\Phi; K, L; \mathfrak a, \mathfrak b)\) as an abstract group is generated by \(Z_\alpha(x, p)\) and \(Z_{\alpha \beta}(u, s)\) with the relations
\begin{itemize}
\item[(Sym)]
 \(Z_{\alpha \beta}(u, s) = Z_{\beta \alpha}(u, s)\) if we identify \(V_{\alpha \beta}(\mathfrak a, \mathfrak b)\) with \(V_{\beta \alpha}(\mathfrak a, \mathfrak b)\);
\item[(Add)]
 \begin{enumerate}
  \item \(Z_\alpha(x, p)\, Z_\alpha(y, p) = Z_\alpha(x + y, p)\),
  \item \(Z_{\alpha \beta}(u, s)\, Z_{\alpha \beta}(v, s) = Z_{\alpha \beta}(u \dotplus v, s)\);
 \end{enumerate}
\item[(Comm)]
 \begin{enumerate}
 \item \([Z_\alpha(x, p), Z_\beta(y, q)] = 1\) for \(\alpha \perp \beta\) and \(\alpha + \beta \notin \Phi\),
 \item \(\up{Z_\gamma(x, p)}{Z_{\alpha \beta}(u, s)} = Z_{\alpha \beta}\bigl(Z_\gamma(\delta(x), p) u, Z_\gamma(\delta(x), p) s\bigr)\) for \(\gamma \in \mathbb R (\alpha - \beta)\);
 \end{enumerate}
\item[(Simp)] \(Z_\alpha(x, p) = Z_{\alpha \beta}(x \mathrm e_\alpha, p \mathrm e_{-\alpha})\);
\item[(HW)]
 \begin{enumerate}
 \item \(Z_{\alpha, \alpha + \beta}\bigl(X_{-\beta}(r)\, x \mathrm e_{\alpha + \beta}, u\bigr) = Z_{\alpha + \beta, \beta}\bigl(X_{-\alpha}(p)\, x \mathrm e_{\alpha + \beta}, X_{-\alpha}(p) v\bigr)\), where \(\alpha\), \(\beta\) is a basis of root subsystem of type \(\mathsf A_2\), \(u = p \mathrm e_{-\alpha} \oplus q \mathrm e_{-\alpha - \beta}\), \(v = q \mathrm e_{-\alpha - \beta} \oplus r \mathrm e_{-\beta}\),
 \item \(Z_{\alpha, \alpha + \beta}\bigl(X_{-\beta}(s) u, v\bigr) = Z_{2 \alpha + \beta, \beta}\bigl(X_{-\alpha}(p) u, X_{-\alpha}(p) w\bigr)\), where \(\alpha\), \(\beta\) is a basis of a root subsystem of type \(\mathsf B_2\) and \(\alpha\) is short, \(u = x \mathrm e_{2\alpha + \beta} \dotoplus y \mathrm e_{\alpha + \beta}\), \(v = p \mathrm e_{-\alpha} \dotoplus q \mathrm e_{-2 \alpha - \beta} \dotoplus r \mathrm e_{-\alpha - \beta}\), \(w = q \mathrm e_{-2\alpha - \beta} \oplus r \mathrm e_{-\alpha - \beta} \oplus s \mathrm e_{-\beta}\);
 \end{enumerate}
\item[(Delta)] \(Z_\alpha(x, \delta(y) + p) = \up{Z_{-\alpha}(y, 0)}{Z_\alpha(x, p)}\).
\end{itemize}
\end{theorem}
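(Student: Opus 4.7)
The plan is to let $\widetilde G$ denote the abstract group presented by the listed generators $Z_\alpha(x,p)$ and $Z_{\alpha\beta}(u,s)$ subject to (Sym)--(Delta). Since $X_\alpha(x) = Z_\alpha(x,0)$ generate $\stlin(\Phi; K, L; \mathfrak a, \mathfrak b)$ as a crossed module, the evident map $\pi \colon \widetilde G \to \stlin(\Phi; K, L; \mathfrak a, \mathfrak b)$ sending each formal generator to its namesake is surjective; the task is to construct an inverse. I would handle the three types of $\Phi$ separately, reserving $\mathsf F_4$ for last.

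For $\Phi = \mathsf C_\ell$, the statement is a direct consequence of theorem~\ref{pres-stu} applied to the crossed module $\delta \colon \ofasymp(\ell; \mathfrak a, \mathfrak b) \to \ofasymp(\ell; K, L)$ of odd form rings constructed in the preceding section, whose unitary Steinberg group is naturally $\stlin(\mathsf C_\ell; K, L; \mathfrak a, \mathfrak b)$. What remains is to translate the presentation from proposition~\ref{identities} through the root-subgroup dictionary of $\mathsf C_\ell$: the odd-unitary generators $Z_{ij}$ and $Z_j$ become $Z_\alpha$ for short and long $\alpha$ respectively, while $Z_{i \oplus j, k}$, $Z_{i, j \oplus k}$, $Z_{i \oplus j}$ and $Z^{\ominus i}_j$ match the $Z_{\alpha\beta}(u,s)$ parametrised by the various rank-$2$ subsystems; each of (Sym)--(Delta) in proposition~\ref{identities} then corresponds, up to Weyl symmetry, to one of (Sym)--(Delta) here. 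The case $\Phi = \mathsf B_\ell$ is entirely analogous, using the corrected crossed module $(S, \Theta) \to \ofaorth(2\ell + 1; K, L)$ built at the end of the previous section.

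For $\Phi = \mathsf F_4$ no odd form ring realization is available and this is where the real work lies. The strategy is to build a conjugacy calculus directly on $\widetilde G$ with respect to the root datum of $\mathsf F_4$, i.e.\ a family of maps $\up g{\{h\}_\Sigma}$ for saturated special $\Sigma \subseteq \mathsf F_4$ satisfying analogues of (Hom), (Sub), (Chev), (XMod), (Conj), and then recover an inverse of $\pi$ from it exactly as in the proof of theorem~\ref{pres-stu}. The rank-$2$ relations (Comm), (Simp), (HW) in the presentation encode precisely the one- and two-dimensional cases of this calculus, so $\up{(-)}{\{=\}_\Sigma}$ may be defined by induction on $|\Sigma|$ following the proof of proposition~\ref{identities}, peeling off an extreme root of $\Sigma$ and rewriting via (Chev) inside a rank-$2$ subsystem of type $\mathsf A_2$, $\mathsf B_2$ or $\mathsf A_1 \times \mathsf A_1$. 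Higher-rank consistency of the construction is automatic because any three roots $\alpha, \beta, \gamma \subseteq \mathsf F_4$ with $\alpha$ strictly inside $\mathbb R_+ \beta + \mathbb R_+ \gamma$ lie in a rank-$3$ Levi of type $\mathsf A_3$, $\mathsf B_3$ or $\mathsf C_3$, whose presentation is already known from the first two cases and from \cite{RelStLin}.

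The main obstacle will be the $\mathsf F_4$ analogue of theorem~\ref{root-elim}, namely surjectivity and bijectivity of $F_\alpha$ for each root $\alpha \in \mathsf F_4$: one has to construct the new root homomorphisms $\widetilde X_\alpha$ on $\overline{\stlin}(\mathsf F_4 / \alpha; K, L; \mathfrak a, \mathfrak b)$ and check all Steinberg relations for them (the analogues of lemmas~\ref{ush-new-root}, \ref{sh-new-root}). In the odd-unitary setting this rested on the explicit ring-theoretic lemmas~\ref{ring-pres}, \ref{form-pres} and~\ref{associator}; for $\mathsf F_4$ I would instead exploit that a pair $(K, L)$ of type $\mathsf F$ is simultaneously of types $\mathsf B$ and $\mathsf C$, so that each would-be new root subgroup after contracting $\alpha$ lies in a Levi $\mathsf B_3$- or $\mathsf C_3$-subsystem of $\mathsf F_4 / \alpha$ where the odd form ring machinery supplies the required identities. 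Once root elimination is secured for all $\alpha \in \mathsf F_4$, the action of $\stlin(\mathsf F_4; K, L)$ on $\widetilde G$ is manufactured by conjugation by $\widetilde X_\alpha$ as in theorem~\ref{pres-stu}, and the inverse of $\pi$ follows by the same surjectivity and $v \circ u = \id$ argument used to prove that theorem.
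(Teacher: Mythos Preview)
Your treatment of $\mathsf B_\ell$ and $\mathsf C_\ell$ matches the paper exactly: reduce to theorem~\ref{pres-stu} via the odd form ring constructions and translate the presentation.

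For $\mathsf F_4$, however, the paper takes a much shorter route than you propose. It does \emph{not} rebuild a conjugacy calculus for $\mathsf F_4$, and it does \emph{not} prove any analogue of theorem~\ref{root-elim}. Instead it constructs the action of $\stlin(\mathsf F_4;K,L)$ on $G$ directly on generators: for $g=X_\alpha(a)$ and $h=Z_\beta(b,p)$ with $\alpha,\beta$ linearly independent one writes $\up gh$ explicitly as some $Z_{\gamma_1\gamma_2}(u,s)$; for $\beta\in\mathbb R\alpha$ one first picks a rank-$2$ subsystem $\Psi\ni\alpha$ of type $\mathsf A_2$ or $\mathsf B_2$ and uses the relations to rewrite $h$ through roots of $\Psi\setminus\mathbb R\alpha$. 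All well-definedness and compatibility checks then reduce to the three-dimensional subsystems $\Psi'=(\mathbb R\Psi+\mathbb R\alpha)\cap\Phi$ of $\mathsf F_4$, which are of type $\mathsf B_3$, $\mathsf C_3$, or $\mathsf A_1\times\mathsf A_2$; the first two are already settled by the odd-unitary theorem, and the $\mathsf A_1\times\mathsf A_2$ case is handled by a short explicit argument. Your proposal to redo lemmas~\ref{ush-new-root}--\ref{sh-new-root} and theorem~\ref{root-elim} for $\mathsf F_4$ is not wrong in spirit, but it is a large detour the paper avoids entirely.

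Two concrete slips in your $\mathsf F_4$ outline: first, $\mathsf F_4$ has no saturated rank-$3$ subsystem of type $\mathsf A_3$ (the possibilities are $\mathsf B_3$, $\mathsf C_3$, $\mathsf A_1\times\mathsf A_2$), and the $\mathsf A_1\times\mathsf A_2$ case, which you omit, does require a separate elementary argument. Second, your sentence ``any three roots $\alpha,\beta,\gamma$ with $\alpha$ strictly inside $\mathbb R_+\beta+\mathbb R_+\gamma$ lie in a rank-$3$ Levi'' is off: such roots span a two-dimensional subspace, so they sit in a rank-$2$ subsystem, and the rank-$3$ subsystems enter only when checking that the action is independent of choices and preserves the relations.
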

\begin{proof}
If \(\Phi\) is of type \(\mathsf B_\ell\) or \(\mathsf C_\ell\), then the claim directly follows from theorem \ref{pres-stu}, so we may assume that \(\Phi\) is of type \(\mathsf F_4\). Let \(G\) be the group with the presentation from the statement, it is generated by the elements \(Z_\alpha(a, p)\) satisfying only the relations involving roots from root subsystems of rank \(2\). First of all, we construct a natural action of \(\stlin(\Phi; K, L)\) on \(G\). Notice that any three-dimensional root subsystem \(\Psi \subseteq \Phi\) such that \(\Psi = \mathbb R \Psi \cap \Phi\) is of type \(\mathsf B_3\), \(\mathsf C_3\), or \(\mathsf A_1 \times \mathsf A_2\).

Let \(g = X_\alpha(a)\) be a root element in \(\stlin(\Phi; K, L)\) and \(h = Z_\beta(b, p)\) be a generator of \(G\). If \(\alpha\) and \(\beta\) are linearly independent, then \(\up gh\) may be defined directly as \(h\) itself or an appropriate \(Z_{\gamma_1 \gamma_2}(u, s)\). Otherwise we take a root subsystem \(\alpha \in \Psi \subseteq \Phi\) of rank \(2\) of type \(\mathsf A_2\) or \(\mathsf B_2\), express \(h\) in terms of \(Z_\gamma(c, q)\) for \(\gamma \in \Psi \setminus \mathbb R \alpha\), and apply the above construction for \(\up g{Z_\gamma(c, q)}\). The resulting element of \(G\) is independent of the choices of \(\Psi\) and the decomposition of \(h\) since we already know the theorem in the cases \(\mathsf B_3\) and \(\mathsf C_3\).

We have to check that \(\up g{(-)}\) preserves the relations between the generators. Let \(\Psi\) be the intersection of \(\Phi\) with the span of the roots in a relation, \(\Psi' = (\mathbb R \Psi + \mathbb R \alpha) \cap \Phi\). Consider the possible cases:
\begin{itemize}
\item If \(\Psi'\) is of rank \(< 3\) or has one of the types \(\mathsf B_3\), \(\mathsf C_3\), then the result follows from the cases \(\mathsf B_3\) and \(\mathsf C_3\) of the theorem.
\item If \(\Psi'\) is of type \(\mathsf A_1 \times \mathsf A_2\) and \(\alpha\) lies in the first component (so \(\Psi\) is the second component), then \(g\) acts trivially on the generators with the roots from \(\Psi\) and there is nothing to prove.
\item If \(\Psi'\) is of type \(\mathsf A_1 \times \mathsf A_2\), \(\alpha\) lies in the second component, and \(\Psi\) is of type \(\mathsf A_1 \times \mathsf A_1\), then we have to check that \(\bigl[\up g{Z_\beta(b, p)}, \up g{Z_\gamma(c, q)}\bigr] = 1\), where \(\beta\) lies in the first component and \(\gamma\) lies in the second component. But \(\up g{Z_\beta(b, p)} = Z_\beta(b, p)\) commutes with \(\up g{Z_\gamma(c, q)}\), the latter one is a product of various \(Z_{\gamma'}(c', q')\) with \(\gamma'\) from the second component of \(\Psi'\).
\end{itemize}

Now let us check that the resulting automorphisms of \(G\) corresponding to root elements satisfy the Steinberg relations when applied to a fixed \(Z_\beta(b, p)\). Let \(\Psi\) be the intersection of \(\Phi\) with the span of the root from such a relation and \(\Psi' = (\mathbb R \Psi + \mathbb R \beta) \cap \Phi\). There are the following cases:
\begin{itemize}
\item If \(\Psi'\) is of rank \(< 3\) or has one of the types \(\mathsf B_3\), \(\mathsf C_3\), then the result follows from the cases \(\mathsf B_3\) and \(\mathsf C_3\) of the theorem.
\item If \(\Psi'\) is of type \(\mathsf A_1 \times \mathsf A_2\) and \(\beta\) lies in the first component (so \(\Psi\) is the second component), then both sides of the relation trivially act on \(Z_\beta(b, p)\) and there is nothing to prove.
\item If \(\Psi'\) is of type \(\mathsf A_1 \times \mathsf A_2\), \(\beta\) lies in the second component, and \(\Psi\) is of type \(\mathsf A_1 \times \mathsf A_1\), then we have to check that \(\up{X_\alpha(a)\, X_\gamma(c)}{Z_\beta(b, p)} = \up{X_\gamma(c)\, X_\alpha(a)}{Z_\beta(b, p)}\), where \(\alpha\) lies in the first component and \(\gamma\) lies in the second component. But both sides coincide with \(\up{X_\gamma(c)}{Z_\beta(b, p)}\), it is a product of various \(Z_{\beta'}(b', p')\) with \(\beta'\) from the second component of \(\Psi'\).
\end{itemize}

Now consider the homomorphism
\[u \colon G \to \stlin(\Phi; K, L; \mathfrak a, \mathfrak b), Z_\alpha(a, p) \mapsto \up{X_{-\alpha}(p)}{X_\alpha(a)}.\]
By construction, it is \(\stlin(\Phi; K, L)\)-equivariant, so it is surjective. The \(\stlin(\Phi; K, L)\)-equivariant homomorphism
\[v \colon \stlin(\Phi; K, L; \mathfrak a, \mathfrak b) \to G, X_\alpha(a) \mapsto X_\alpha(a)\]
is clearly well-defined. It remains to notice that \(v \circ u\) is the identity.
\end{proof}

\bibliographystyle{plain}
\bibliography{references}

\end{document}